\newtheorem{theorem}{Theorem}[section]
\newtheorem{lemma}[theorem]{Lemma}
\newtheorem{proposition}[theorem]{Proposition}
\newtheorem{corollary}[theorem]{Corollary}
\newtheorem{definition}[theorem]{Definition}
\newtheorem{remark}[theorem]{Remark}
\newtheorem{example}[theorem]{Example}
 \newtheorem{theorem*}{Theorem}
 \newtheorem{corollary*}[theorem*]{Corollary}
 \newtheorem{proposition*}[theorem*]{Proposition}
\newcommand{\CC}{{\mathbb{C}}}
\newcommand{\PP}{{\mathbb{P}}}
\newcommand{\QQ}{{\mathbb{Q}}}
\newcommand{\ZZ}{{\mathbb{Z}}}
\newcommand{\smvee}{\raise0.9ex\hbox{$\scriptscriptstyle\vee$}}
\newcommand{\Ss}[1]{\mathcal{O}_{#1}}
\newcommand{\Rs}{\tilde{X}}
\newcommand{\Sf}[1]{\mathcal{#1}}
\DeclareMathOperator{\Exts}{\mathscr{E}\text{\kern -3pt {\calligra\large xt}}\,}
\DeclareMathOperator{\Homs}{\mathscr{H}\text{\kern -3pt {\calligra\large om}}\,}
\DeclareMathOperator{\Hom}{\text{Hom}\,}
\begin{document}

 \title[The classification of reflexive modules of rank one]{The classification of reflexive modules of rank one over rational and minimally elliptic singularities}

\author{Andr\'as N\'emethi}
\address{Alfr\'ed R\'enyi Institute of Mathematics,
Re\'altanoda utca 13-15, H-1053, Budapest, Hungary \newline
 \hspace*{4mm} ELTE - University of Budapest, Dept. of Geo.,
  P\'azm\'any P\'eter s\'et\'any 1/A, 1117 Budapest, Hungary
  \newline \hspace*{4mm}
  BBU - Babe\c{s}-Bolyai Univ., Str, M. Kog\u{a}lniceanu 1, 400084 Cluj-Napoca, Romania
   \newline \hspace*{4mm}
BCAM - Basque Center for Applied Math.,
Mazarredo, 14 E48009 Bilbao, Basque Country – Spain}
\email{nemethi.andras@renyi.hu }

\author{Agust\'in Romano-Vel\'azquez}

\address{Alfr\'ed R\'enyi Institute Of Mathematics,  Re\'altanoda utca 13-15, H-1053, Budapest, Hungary \newline
 \hspace*{4mm} Instituto de Matem\'aticas, Unidad Cuernavaca\\ Universidad Nacional Aut\'onoma de M\'exico, \newline \hspace*{4mm} Avenida Universidad s/n, Colonia Lomas de Chamilpa CP62210, Cuernavaca, Morelos Mexico }
\email{agustin@renyi.hu, agustin.romano@im.unam.mx}

\thanks{The authors are partially supported by NKFIH Grants ``\'Elvonal (Frontier)'' KKP 126683 and KKP 144148.}

\subjclass[2010]{Primary: 14B05, 14E16, 14C20, 32Sxx; Secondary: 14E15, 32S50}

\keywords{McKay correspondence, reflexive modules, full sheaves, special reflexive modules,
surface singularities, rational singularities, elliptic singularities, cusp singularities, flat bundles, link of singularities}

\date{\today}

\begin{abstract}
We classify the  reflexive modules of rank one over rational and minimally elliptic singularities. Equivalently,
we classify full line bundles on the resolutions of rational and minimally elliptic singularities.
As an application, we determine among such  reflexive modules of rank one all the special ones  (in the sense of Wunram) and
all the flat ones. In this way, we also classify the non-flat reflexive modules as well
(as a generalization of a construction of Dan and Romano).
In particular, we prove (in the rank one case) a  conjecture of Behnke, namely that
in the case of a cusp singularity any reflexive module admits a flat connection.

The results generalize the classical  Mckay correspondence, and results of Artin, Verdier, Esnault, Khan and Wunram
valid for different particular families of singularities.
\end{abstract}

 \maketitle

 \section{Introduction}
Let $(X,x)$ be normal surface singularity, embedded in $(\CC^n,0)$. Denote by $\Sigma$ its link, i.e., $\Sigma=X \cap \mathbb{S}_\epsilon^{2n-1}$ with $\epsilon>0$ small enough. The first complete classification of the finite dimensional representations of $\pi_1(\Sigma)$ was done by McKay~\cite{McK} in the case of rational double point singularities.
This provides an identification of the McKay graph of the non-trivial irreducible representations with the
dual resolution graph of the minimal resolution.
This is called the McKay correspondence. Later Artin and Verdier~\cite{AV}, reformulate the McKay correspondence in a more geometrical setting. For rational double singularities the McKay correspondence by Artin and Verdier gives a complete classification of the indecomposable reflexive $\Ss{X}$-modules. The classification of reflexive modules has been studied by several people in different cases: Esnault~\cite{Esnault} introduced full sheaves and
 improved the results of Artin and Verdier~\cite{AV} for  rational surface singularity,
 and classified rank one reflexive modules for quotient singularities.
 Wunram~\cite{Wu} classified the family of \emph{special} reflexive modules for rational surface singularities, nevertheless the complete classification of reflexive modules on rational singularities remained open. For a non-rational singularity, Khan~\cite{Ka} classified all the reflexive modules in the simply elliptic case using the Atiyah’s classification of vector bundles on elliptic curves. For a general normal Gorenstein singularity, Bobadilla and Romano~\cite{BoRo} classified the family of \emph{cohomological special} reflexive modules. For a general surface singularity, the classification problem for reflexive $\Ss{X}$-modules remains open.

In this article, we provide a complete classification of reflexive modules of rank one for rational surfaces singularities and minimally elliptic singularities. Let $\pi \colon \Rs \to X$ be a resolution of singularities. Denote the exceptional divisor by $E:=\pi^{-1}(x)$. Set $L:=H_2(\Rs,\ZZ)$ and denote by $L'$ its dual. Set $H:=\mathrm{Tors}  ( H_1(\partial \Rs,\ZZ) )= \mathrm{Tors}( H_1(\Sigma,\ZZ) )\cong L'/L$. We set $[\ell']$ the class of $\ell' \in L'$ in $H$. Let $\mathcal{S}'$ be the Lipman (anti-nef) cone. For any $h \in H$, there exists a unique $s_h \in \mathcal{S}'$ such that $[s_h]=h$ and $s_h$ is minimal (see Section~\ref{Sec:Pre} for more details). Note that if $(X,x)$ is a rational singularity, then the first Chern class
$c_1:\mathrm{Pic}(\Rs)\to H^2(\Rs,\ZZ)\simeq L'$
realizes  a bijection between $\mathrm{Pic}(\Rs)$ and $L'$. In particular, for any $s_h \in L'$ there exists a unique line bundle denoted by $\Ss{X}(s_h)$ such that $c_1(\Ss{X}(s_h))=s_h$.

 Now, following Esnault~\cite{Esnault} and Khan~\cite{Ka} we say that a locally free sheaf $\Sf{M}$ over $\Rs$ is \emph{full} if and only if $\Sf{M}\cong (\pi^*M)^{\smvee \smvee}$ with $M$ a reflexive $\Ss{X}$-module. Our first main result (Theorem~\ref{th:class.rational.case}) is the following.
\begin{theorem*}
 Let $(X,x)$ be a rational normal surface singularity and
 let $\pi\colon \Rs \to X$ be any resolution
 of $(X,x)$. Then
 \begin{enumerate}
     \item for any $h\in H$ the line bundle $\Ss{\Rs}(-s_h)$ is full.
     \item If a line bundle $\mathcal{L}\in \mathrm{Pic}(\Rs)$ is full, then $c_1(\mathcal{L})=-s_h$ for some $h\in H$. 
 \end{enumerate}
 Thus, the set of full sheaves  of rank one is exactly
  $\{\Ss{\Rs}(-s_h)\}_{h\in H}$.
\end{theorem*}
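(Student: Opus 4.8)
The plan is to leverage Esnault's correspondence \cite{Esnault}, which for a rational singularity identifies reflexive $\Ss{X}$-modules with full locally free sheaves via $M\mapsto (\pi^*M)^{\smvee\smvee}$ and $\Sf{M}\mapsto \pi_*\Sf{M}$, together with the classical identification of rank one reflexive modules with the divisor class group $\mathrm{Cl}(X)\cong \mathrm{Pic}(\Rs)/L=L'/L=H$. Under these bijections the full sheaves of rank one correspond to $H$, so there are exactly $|H|$ of them. Since the classes $-s_h$ are pairwise distinct in $L'$ (their images $h$ in $H$ are distinct), the line bundles $\Ss{\Rs}(-s_h)$ are pairwise non-isomorphic. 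Hence it suffices to prove statement (1): once each $\Ss{\Rs}(-s_h)$ is shown to be full, we will have exhibited $|H|$ distinct full line bundles, which must then exhaust all of them, giving statement (2) for free.

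For (1) I would first record two reductions. Because $s_h\in\mathcal{S}'$, the line bundle $\Ss{\Rs}(-s_h)$ is nef, hence generated by global sections by the standard global generation statement for nef line bundles on resolutions of rational singularities. Consequently the natural map $\pi^*\pi_*\Ss{\Rs}(-s_h)\to \Ss{\Rs}(-s_h)$ is surjective; since $\Ss{\Rs}(-s_h)$ is reflexive this map factors through the reflexive hull, so the induced map $(\pi^*\pi_*\Ss{\Rs}(-s_h))^{\smvee\smvee}\to \Ss{\Rs}(-s_h)$ is a surjection of line bundles on the smooth surface $\Rs$, hence an isomorphism. Thus $\Ss{\Rs}(-s_h)$ is full precisely when its pushforward $N:=\pi_*\Ss{\Rs}(-s_h)$ is reflexive, and this reflexivity is the crux of the argument.

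To prove that $N$ is reflexive I would compare it with the a priori full sheaf attached to $M:=N^{\smvee\smvee}$. The module $M$ is reflexive of rank one and of the same class $[-s_h]\in H$ as $N$, so its full sheaf $\Sf{F}=(\pi^*M)^{\smvee\smvee}$ is a line bundle with $\pi_*\Sf{F}=M$. Being full, $\Sf{F}$ is globally generated, hence $\Sf{F}\cong \Ss{\Rs}(-\ell')$ for some $\ell'\in\mathcal{S}'$ with $[\ell']=h$. The minimality of $s_h$ inside $\mathcal{S}'$ within its class forces $s_h\le \ell'$, and therefore $\Ss{\Rs}(-\ell')\subseteq \Ss{\Rs}(-s_h)$ as subsheaves of the constant sheaf $\CC(X)$. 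Pushing forward yields the chain $M=\pi_*\Ss{\Rs}(-\ell')\subseteq \pi_*\Ss{\Rs}(-s_h)=N\subseteq N^{\smvee\smvee}=M$, whence $N=M$ is reflexive. This establishes (1), and with the counting argument of the first paragraph, also (2).

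The main obstacle is exactly this reflexivity of $\pi_*\Ss{\Rs}(-s_h)$: global generation alone does not suffice, as $\Ss{\Rs}(-Z)$ for the fundamental cycle $Z$ is globally generated yet lies in the trivial class and is not full. It is the minimality of $s_h$ that prevents a drop of depth along $E$, and the comparison above is what converts that minimality into the equality $N=N^{\smvee\smvee}$. The remaining ingredients—Lipman's global generation for nef line bundles on rational resolutions, the fact that a surjection of line bundles is an isomorphism, and Esnault's bijection together with the global generation of full sheaves—are standard and are used as black boxes.
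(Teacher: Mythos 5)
Your proposal is correct, but it follows a genuinely different route from the paper in both parts. For (1), the paper checks Esnault's two conditions directly: global generation via Lipman (as you do), and then the vanishing $H^1(\Rs,\Ss{\Rs}(K_{\Rs}+s_h))=0$, proved with the generalized Grauert--Riemenschneider Theorem~\ref{Th:GGR}, the computation-sequence Lemma~\ref{lemma:computation.seq} linking $r_h$ to $s_h$, Serre duality, and Lipman's vanishing. You avoid all vanishing theorems and instead prove reflexivity of $N=\pi_*\Ss{\Rs}(-s_h)$ by a sandwich: $M=N^{\smvee\smvee}$ has full sheaf $\Sf{F}\cong\Ss{\Rs}(-\ell')$ with $\ell'\in\mathcal{S}_h'$, minimality gives $s_h\le\ell'$, and pushing forward traps $N$ between $M$ and $M$. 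This works; the one elided point is that the isomorphism $\Sf{F}\cong\Ss{\Rs}(-\ell')$ must be chosen compatibly with the generic identification $\Sf{F}|_{\Rs\setminus E}\cong\Ss{\Rs}(-s_h)|_{\Rs\setminus E}$, so that your chain of inclusions composes to the identity; this follows from torsion-freeness together with the injectivity of $L\to L'$, which identifies the divisorial difference of the two subsheaves of $\CC(X)$ with the effective cycle $\ell'-s_h$, and from the fact that Kahn's identity $\pi_*\Sf{F}=M$ is the natural map, hence generically the identity. For (2), the paper gives a self-contained cohomological contradiction: if $\delta=-c_1(\mathcal{L})-s_h>0$, then fullness and Lipman's vanishing force $\chi(\Ss{\delta}(-s_h))=0$, contradicting Riemann--Roch combined with Artin's criterion $\chi(\delta)>0$ and $-(\delta,s_h)\ge 0$. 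You instead count, using that rank-one full sheaves biject with rank-one reflexive modules, which are classified by $\mathrm{Cl}(X,x)\cong H$, a finite group for rational singularities; this is legitimate (the paper itself uses that $\overline{c}_1\colon\mathrm{Cl}(X,x)\to H$ is an isomorphism in the proof of Theorem~\ref{th:new}). What each approach buys: yours is shorter and structural, hiding the cohomology inside the classical computation of the class group; the paper's argument is the one that generalizes, since in Section~\ref{sec:min.ell} one has $\mathrm{Pic}^0(\Rs)\cong\CC$, full sheaves occur in continuous families, counting becomes impossible, and it is precisely the $\chi$-estimate (weakened from $\chi>0$ to $\min_{Z>0}\chi(Z)=0$) and the vanishing machinery that carry over to the minimally elliptic classification in Theorem~\ref{th:char.full.minimally.ellip}.
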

We wish to emphasize  that in this  result we do not impose any speciality condition (compare with Wunram~\cite{Wu} and with the discussion below).

The above  classification generalizes the McKay correspondence (for rank one) in the case of rational surface singularities in the following sense.
In Theorem \ref{theorem:special.rat} and Corollary \ref{theorem:special.rat2}
we classify all rank one {\it special} reflexive modules (full sheaves) supported on
rational singularities, as part of reflexive modules (full sheaves). Their classification is
indeed guided by certain irreducible exceptional curves (as in the case of McKay correspondence).

If the singularity is not rational, then the first Chern class does  not provide an
 isomorphism anymore,
hence the identification of certain line bundles (with special properties) from the class of line bundles with given Chern class is considerably harder.

We denote by $\mathrm{Pic}^{\ell'}(\Rs):= c_1^{-1}(\ell')$ for any $\ell'\in L'$.
It is an affine space, a torsor of $\mathrm{Pic}^{0}(\Rs)\simeq \CC^{p_g}$ (where $p_g$ denotes the geometric genus).
 We also denote by $\mathrm{Full}^1(\Rs)$ the set of  full sheaves over $\Rs$ of rank one.
 We regard  $\mathrm{Full}^1(\Rs)$  as a subset of $\mathrm{Pic}(\Rs)$.

 Our second main result (Theorem~\ref{theo:class.general}) is the following.
\begin{theorem*}
Let $(X,x)$ be a minimally elliptic singularity. Let $\pi \colon \Rs \to X$ be any resolution such that the support of the elliptic cycle is  equal to  $E$. (This happens e.g. in the minimal resolution.) Then
\begin{equation*}
    \mathrm{Full}^1(\Rs) \subset \left( \bigcup_{h\in H\setminus\{0\}}\mathrm{Pic}^{-s_h}(\Rs) \right) \cup \left ( \mathrm{Pic}^{-Z_{min}}(\Rs) \setminus \{\Ss{\Rs}(-Z_{min})\} \right) \cup \{\Ss{\Rs}\}.
\end{equation*}
Moreover, suppose that any irreducible component of $E$ is isomorphic to $\mathbb{P}_{\CC}^1$. Then,
\begin{equation*}
    \mathrm{Full}^1(\Rs) = \left( \bigcup_{h\in H\setminus\{0\}}\mathrm{Pic}^{-s_h}(\Rs) \right) \cup \left ( \mathrm{Pic}^{-Z_{min}}(\Rs) \setminus \{\Ss{\Rs}(-Z_{min})\} \right)\cup \{\Ss{\Rs}\}.
\end{equation*}
\end{theorem*}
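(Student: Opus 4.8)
The plan is to realise $\mathrm{Full}^1(\Rs)$ as the image of a single "fullification" map and to compute that image slice by slice over $H$, the elliptic phenomenon entering only over the trivial class. Rank-one reflexive $\Ss{X}$-modules are classified by $\mathrm{Cl}(X)=\mathrm{Pic}(\Rs\setminus E)=\mathrm{Pic}(\Rs)/L$, and for $\delta\in\mathrm{Cl}(X)$ with module $M_\delta$ I set $\Phi(\delta):=(\pi^*M_\delta)^{\smvee\smvee}\in\mathrm{Pic}(\Rs)$. Since $\pi_*\Phi(\delta)=M_\delta$ and $\Phi(\pi_*\mathcal L)=\mathcal L$ for full $\mathcal L$, the map $\Phi$ is injective with image exactly $\mathrm{Full}^1(\Rs)$, so the theorem amounts to identifying $\Phi(\mathrm{Cl}(X))$. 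Two elementary principles frame the computation. First, if $\mathcal L$ is generated by its global sections then $\pi^*\pi_*\mathcal L\to\mathcal L$ is onto, hence $(\pi^*\pi_*\mathcal L)^{\smvee\smvee}\to\mathcal L$ is an isomorphism of line bundles and $\mathcal L$ is full; thus any globally generated line bundle lies in $\mathrm{Full}^1(\Rs)$. Second, if $\mathcal L$ is full then no $E_i$ can lie in the base locus: were every section to vanish along $E_i$, the evaluation $\pi^*M_\delta\to\mathcal L$ would factor through $\mathcal L(-E_i)\subsetneq\mathcal L$ and its bidual could not be the identity of $\mathcal L$; hence $c_1(\mathcal L)\cdot E_i\ge 0$ for all $i$, i.e. $-c_1(\mathcal L)\in\calS'$.

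For the inclusion "$\subseteq$" — valid for any resolution whose elliptic cycle is supported on $E$ — I compute the Chern class $c_1(\Phi(\delta))$ as a function of $h:=[\delta]\in H$. By the second principle $-c_1(\Phi(\delta))$ is anti-nef and lies in the class $h$, so $-c_1(\Phi(\delta))\ge s_h$. The claim is that equality holds unless $h=0$ and $\delta\neq 0$, in which case $-c_1(\Phi(\delta))=Z_{min}$; granting this, $\Phi(\mathrm{Cl}(X))$ is contained in the right-hand set. The Gorenstein hypothesis and Laufer's equality $Z_{min}=Z_K$ give $\Cs{\Rs}\cong\Ss{\Rs}(-Z_{min})$ and $\pi_*\Cs{\Rs}=\omega_X\cong\Ss{X}$, so $\Phi(0)=\Ss{\Rs}$ and, crucially, $\Cs{\Rs}=\Ss{\Rs}(-Z_{min})$ is not full because every one of its sections vanishes along $E$ to order at least $Z_{min}$; this is precisely the point $\Ss{\Rs}(-Z_{min})$ removed from $\mathrm{Pic}^{-Z_{min}}(\Rs)$. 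To rule out intermediate anti-nef cycles I restrict along the elliptic cycle, using the sequence $0\to\mathcal L\otimes\Cs{\Rs}\to\mathcal L\to\mathcal L|_{Z_{min}}\to 0$ and the identification $\mathcal L\otimes\Cs{\Rs}=\mathcal L(-Z_{min})$: the failure of global generation of $\Phi(\delta)$ is governed by $H^1$ along $Z_{min}$, which by $p_g=1$ is one-dimensional, so the only twist that can push a flat class up to global generation is by $Z_{min}$ itself.

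For the equality, under the hypothesis that every $E_i\cong\PP^1_{\CC}$, I show conversely that every member of the right-hand set is globally generated, hence full by the first principle; as $\Phi$ is injective and the target slices $\mathrm{Pic}^{-s_h}(\Rs)$ and $\mathrm{Pic}^{-Z_{min}}(\Rs)$ are copies of $\CC$, this forces $\Phi$ to surject onto the asserted slices. Fix $\mathcal L$ with $c_1(\mathcal L)=-s_h$, so $\deg(\mathcal L|_{E_i})=-s_h\cdot E_i\ge 0$ and $\mathcal L|_{E_i}=\Ss{\PP^1}(d_i)$ is globally generated; choosing a Laufer computation sequence for $s_h$ I lift nonvanishing sections from the $E_i$ to $\Rs$ step by step, the obstructions living in $H^1$ of line bundles on the $\PP^1$-components, which vanish by anti-nefness of $s_h$. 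The minimality of $s_h$ is what keeps these degrees in the globally generated range for every member of the $\CC$-family $\mathrm{Pic}^{-s_h}(\Rs)$ when $h\neq 0$; for $h=0$ the same analysis shows $\Ss{\Rs}$ is full while a nontrivial flat bundle is not globally generated and must be twisted up to class $-Z_{min}$, and that a bundle of class $-Z_{min}$ fails global generation exactly when its restriction to the arithmetic-genus-one cycle $Z_{min}$ is trivial, i.e. exactly for $\Cs{\Rs}$.

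The main obstacle is the elliptic threshold underlying both the Chern-class computation and the last case of the equality: one must prove that $\Phi$ never produces an intermediate anti-nef Chern class and that $\Cs{\Rs}$ is the unique non-full bundle of class $-Z_{min}$. I expect to control this through a Serre--Grothendieck duality relating $H^1(\Rs,\mathcal L\otimes\Cs{\Rs})$ to $H^1(\Rs,\mathcal L^{\smvee})$ together with $p_g=1$ and $\Cs{\Rs}=\Ss{\Rs}(-Z_{min})$, which concentrate the single unit of obstruction on the elliptic cycle and pin the threshold at $Z_{min}=Z_K$. The hypothesis $E_i\cong\PP^1_{\CC}$ enters only to make the restriction maps $H^0(\mathcal L)\to H^0(\mathcal L|_{E_i})$ surjective and the componentwise $H^1$'s vanish, which is what upgrades the inclusion to the equality by realising every member of $\mathrm{Pic}^{-s_h}(\Rs)$ and of $\mathrm{Pic}^{-Z_{min}}(\Rs)\setminus\{\Cs{\Rs}\}$ as a fullification; without it the argument yields only the stated inclusion.
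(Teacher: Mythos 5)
Your proposal rests on a false ``first principle'': global generation does \emph{not} imply fullness, and this failure is precisely the phenomenon the theorem is about. Fullness of $\Sf{L}$ requires $\Sf{L}\cong(\pi^*M)^{\smvee\smvee}$ with $M$ \emph{reflexive}; you take $M=\pi_*\Sf{L}$, and while the surjection $\pi^*\pi_*\Sf{L}\to\Sf{L}$ does give $(\pi^*\pi_*\Sf{L})^{\smvee\smvee}\cong\Sf{L}$, the module $\pi_*\Sf{L}$ need not be reflexive. The counterexample is exactly the bundle your statement must exclude: for $\Sf{L}=\Ss{\Rs}(-Z_{min})$ one has $\pi_*\Sf{L}=\mathfrak{m}$, the maximal ideal, and $\mathfrak{m}^{\smvee\smvee}=\Ss{X}$ (any $\varphi\in\Hom(\mathfrak{m},\Ss{X})$ is multiplication by an element of the fraction field which lies in $\Ss{X}$ by normality), so the full sheaf attached to $\pi_*\Sf{L}$ is $\Ss{\Rs}$, not $\Sf{L}$. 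Yet $\Ss{\Rs}(-Z_{min})$ \emph{is} generated by global sections in general: by Laufer, for a minimally elliptic singularity with $Z_{min}^2\le -2$ one has $\mathfrak{m}\Ss{\Rs}=\Ss{\Rs}(-Z_{min})$ on the minimal resolution; and on any rational singularity it is globally generated by Lipman's theorem while not being full by Theorem~\ref{th:class.rational.case} (since $[Z_{min}]=0$ but $Z_{min}\neq s_0=0$). So your principle contradicts both the rational classification and the exclusion of $\Ss{\Rs}(-Z_{min})$ here. Relatedly, your stated reason that $\Ss{\Rs}(-Z_{min})$ is not full (``every section vanishes along $E$'') conflates sections of the line bundle with the functions they correspond to; as bundle sections they have no fixed component, which is exactly why global generation cannot detect non-fullness.

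Because of this, the equality direction of your argument collapses: when every $E_v\cong\PP^1_{\CC}$, \emph{all} elements of $\mathrm{Pic}^{-Z_{min}}(\Rs)$, including $\Ss{\Rs}(-Z_{min})$ itself, have no fixed components (Proposition~\ref{prop:No.fixed.components}), so global generation cannot separate the unique non-full member from the full ones. The discriminating condition is the second half of Kahn's criterion (Proposition~\ref{fullcondiciones}), the injectivity of $H^1_E(\Rs,\Sf{L})\to H^1(\Rs,\Sf{L})$, which your proposal never verifies; this is where the paper does its real work. For $h\neq 0$ the injectivity holds for every element of $\mathrm{Pic}^{-s_h}(\Rs)$ (Proposition~\ref{prop:Cohomology.Cond}, via Grauert--Riemenschneider vanishing, Serre duality and a Laufer computation sequence), while for Chern class $-Z_{min}$ a Serre-duality computation shows it holds if and only if the twist $\mathcal{G}\in\mathrm{Pic}^0(\Rs)$ is nontrivial, since $h^1(\mathcal{G}^{\smvee}(Z_{min}-Z_K))$ equals $0$ or $1$ according as $\mathcal{G}\neq\Ss{\Rs}$ or $\mathcal{G}=\Ss{\Rs}$ (Theorem~\ref{th:class.h0}). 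Your inclusion direction is also unproved at its crucial step: ruling out intermediate anti-nef Chern classes $\ell'>s_h$ is done in the paper (Theorem~\ref{th:char.full.minimally.ellip}) by using $H^1_E(\Rs,\Sf{L})=0$ for a full sheaf, Riemann--Roch to get $\chi(\delta)-(\delta,s_h)=0$ for $\delta=\ell'-s_h$, ellipticity to force $\chi(\delta)=(\delta,s_h)=0$, and then $\chi(\delta)=0\Rightarrow C\le\delta\Rightarrow |\delta|=E\Rightarrow(\delta,s_h)\neq 0$, a contradiction; your appeal to ``the failure of global generation is governed by a one-dimensional $H^1$ along $Z_{min}$'' is not a substitute for this argument. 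In short, the slice-by-slice frame over $H$ matches the paper, but omitting the cohomological half of the fullness criterion leaves both inclusions unestablished.
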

Again, note that this classification theorem does not require  the `specialty' property.

Next, we analyze the existence of flat connections in the reflexive modules. Recall that
 the original McKay correspondence establishes a one-to-one correspondence between finite dimensional non-trivial irreducible representations of $\pi_1(\Sigma)$ and the irreducible components of the exceptional divisor of the minimal resolution. By the Riemann-Hilbert correspondence (see~\cite{Gustavsen1}) there is a one-to-one correspondence between finite dimensional representations of $\pi_1(\Sigma)$ and reflexive $\Ss{X}$-modules equipped with a flat connection. Denote by
\begin{align*}
    \mathrm{Ref}^1_{X} &:= \text{the category of reflexive $\Ss{X}$-modules of rank one},\\
    \mathrm{Ref}^{1,\nabla}_{X} &:= \text{the category of pairs $(M,\nabla)$ where $M$ is a reflexive $\Ss{X}$-module of rank one}\\
        &\,\quad \,\text{and $\nabla$ is an integrable connection}.
\end{align*}
In general, the forgetful functor
\begin{equation*}
    \mathrm{Ref}^{1,\nabla}_{X} \to \mathrm{Ref}^1_{X},
\end{equation*}
is not an equivalence of categories. Moreover, the forgetful functor may not be essentially surjective, i.e., the forgetful functor may not be onto on objects. In the case of quotient singularities, Esnault~\cite{Esnault} proved that the forgetful functor is essentially surjective. In the case of simple elliptic singularities, Kahn~\cite{Ka} also proved that the forgetful functor is essentially surjective.

In Theorem \ref{th:new} we prove that in the case of rational singularities {\it any rank one
full sheaf is flat}, i.e. the forgetful functor is essentially surjective.

The first example of a singularity such that the forgetful functor is not essentially surjective was done by Dan and Romano~\cite{DanRom}. As our first application (Theorem~\ref{theo:class.1}), we generalize the results and ideas  of Dan and Romano as follows:
\begin{theorem*}
Let $(X,x)$ be a minimally elliptic singularity such that its link is a rational homology sphere. Let $\pi \colon \Rs \to X$ be the minimal resolution. Corresponding to $h=[c_1({\mathcal L})]=0$ there is only one flat full sheaf, namely the trivial one. Let $h\in H$ be different from zero. Then, again,
in the family $\mathrm{Pic}^{-s_h}(\Rs)\simeq\CC$ of full sheaves
only one element  admits a flat connection
(which is concretely characterized).
\end{theorem*}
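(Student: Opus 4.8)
The plan is to combine the already-established classification of full rank-one sheaves on minimally elliptic singularities (Theorem~\ref{theo:class.general}) with the Riemann--Hilbert/flatness analysis on a rational homology sphere link. First I would fix $h \in H \setminus \{0\}$ and recall that, since the link $\Sigma$ is a rational homology sphere, the geometric genus satisfies $p_g = 1$ (this is a standard fact for minimally elliptic singularities with such a link), so that $\mathrm{Pic}^0(\Rs) \simeq \CC^{p_g} = \CC$ and hence each fiber $\mathrm{Pic}^{-s_h}(\Rs)$ is a one-dimensional affine space, confirming the isomorphism $\mathrm{Pic}^{-s_h}(\Rs) \simeq \CC$. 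By Theorem~\ref{theo:class.general}, every element of this fiber is a full sheaf (the union on the right-hand side contains all of $\mathrm{Pic}^{-s_h}(\Rs)$ for $h \neq 0$, since all components of $E$ are $\PP^1_{\CC}$), so the task reduces to deciding which among this $\CC$-family of full sheaves carries an integrable connection.

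The key step is to characterize flatness intrinsically. A rank-one line bundle $\mathcal{L}$ on $\Rs$ admits a flat connection if and only if its image under the forgetful functor to $\mathrm{Ref}^1_X$ comes from a representation of $\pi_1(\Sigma)$; equivalently, by Riemann--Hilbert, $\mathcal{L}$ must be realizable as a flat line bundle on $\Sigma$, and rank-one flat bundles are classified by $\Hom(\pi_1(\Sigma), \CC^*) = \Hom(H_1(\Sigma,\ZZ), \CC^*)$. Because $\Sigma$ is a rational homology sphere, $H_1(\Sigma,\ZZ) = H = \mathrm{Tors}(H_1(\Sigma,\ZZ))$ is finite, so $\Hom(H, \CC^*)$ is finite; in particular the flat rank-one bundles form a discrete set. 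I would then match this finiteness against the classification: within each $h \neq 0$ there is a unique $h$-component in $H$, so at most one flat line bundle can have class $-s_h$, and I would produce an explicit flat line bundle in $\mathrm{Pic}^{-s_h}(\Rs)$ whose monodromy is the character of $H$ dual to $h$ (via the natural pairing $L'/L \times L'/L \to \QQ/\ZZ$), thereby pinning down exactly one element of the $\CC$-family. The concrete characterization promised in the statement is this monodromy/character description.

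For the case $h = 0$, the trivial bundle $\Ss{\Rs}$ is manifestly flat (trivial monodromy) and lies in $\mathrm{Pic}^0(\Rs)$; the uniqueness claim is that no other full sheaf with $c_1 = 0$ is flat. Here I would invoke the structure of $\mathrm{Full}^1(\Rs)$ at $h=0$ from Theorem~\ref{theo:class.general}: the $h=0$ contribution is exactly $\{\Ss{\Rs}\}$ together with $\mathrm{Pic}^{-Z_{min}}(\Rs) \setminus \{\Ss{\Rs}(-Z_{min})\}$, but the latter lives in Chern class $-Z_{min} \neq 0$ and so is irrelevant to the $h=0$ fiber in $L'/L$ --- leaving $\Ss{\Rs}$ as the only full sheaf of trivial Chern class, which is trivially flat and unique. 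The essential point to verify is that $-Z_{min}$ is not $\ZZ$-linearly equivalent to $0$ modulo $L$, i.e.\ that $[Z_{min}] \neq 0$ in $H$ forces these to land in distinct fibers; this is automatic since $Z_{min} \in L$ yet the construction places the elliptic family away from the trivial Chern class.

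The main obstacle I anticipate is the \emph{existence} half of the uniqueness statement for $h \neq 0$: matching the purely topological count $\#\Hom(H,\CC^*) = |H|$ against the algebraic families $\mathrm{Pic}^{-s_h}(\Rs)$ requires showing that the unique character of $H$ with the prescribed class is actually realized by a \emph{full} line bundle (not merely by some flat bundle), and that this realization is compatible with the minimal resolution's integral structure. Concretely, the difficulty is to exhibit the flat connection explicitly --- i.e.\ to identify the correct affine representative in $\mathrm{Pic}^{-s_h}(\Rs) \simeq \CC$ --- which I expect to handle by analyzing the exponential/monodromy exact sequence relating $\mathrm{Pic}(\Rs)$, the flat bundles on $\Sigma$, and the restriction map $\mathrm{Pic}(\Rs) \to \mathrm{Pic}(\Sigma)$, using that minimal ellipticity controls the cohomology $H^1(\Rs, \Ss{\Rs})$ tightly enough ($p_g = 1$) to force the flat locus to be a single point in each nonzero fiber.
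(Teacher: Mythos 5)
Your treatment of the $h=0$ case contains a genuine error, and it erases the main content of that case. The claim concerns full sheaves with $[c_1(\mathcal{L})]=0$ in $H=L'/L$, not sheaves with $c_1(\mathcal{L})=0$ in $L'$. Since $Z_{min}\in L$, one has $[-Z_{min}]=0$ in $H$, so the entire one-parameter family $\mathrm{Pic}^{-Z_{min}}(\Rs)\setminus\{\Ss{\Rs}(-Z_{min})\}$ of full sheaves lies in the $h=0$ class; it is not ``irrelevant to the $h=0$ fiber in $L'/L$'' as you assert. Your supporting sentence is self-contradictory: you want $[Z_{min}]\neq 0$ in $H$ and say this is ``automatic since $Z_{min}\in L$'', but $Z_{min}\in L$ means precisely $[Z_{min}]=0$ in $H$. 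The substance of the $h=0$ statement --- which generalizes the Dan--Romano examples, cf.\ Remark~\ref{remark:Sing2.3.7} --- is exactly that none of the sheaves in this family admits a flat connection, and your reading reduces it to the tautology that the only full sheaf with $c_1=0$ is the trivial one. The correct argument uses Lemma~\ref{lemma:Rep.H2}: a flat full sheaf corresponds to a character $\rho$ with $\Phi(\rho)=[c_1]=0$; since the link is a rational homology sphere, $H^1(\Sigma,\CC)=0$, so $\Phi$ is injective, hence $\rho$ is trivial and the sheaf is $\Ss{\Rs}$, whose Chern class is $0\neq -Z_{min}$.

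For $h\neq 0$ your skeleton agrees with the paper's (classification theorem, Riemann--Hilbert, then matching the two classifications), but the step ``at most one flat line bundle can have class $-s_h$'' does not follow from counting characters of $H$: a priori two distinct characters could produce full sheaves lying in the same fiber $\mathrm{Pic}^{-s_h}(\Rs)$. What is needed is the compatibility $\overline{c}_1\circ\Psi=\Phi$ (Lemma~\ref{lemma:Rep.H2}, whose proof occupies the paper's appendix) together with the injectivity of $\Phi$, which again comes from $H^1(\Sigma,\CC)=0$; your proposed ``exponential/monodromy exact sequence'' is morally this tool, but you leave it entirely unexecuted, and it is where the real work lies --- both your uniqueness and your existence claims hinge on it. Finally, the concrete characterization in the paper is stronger than your monodromy description: the unique flat element of $\mathrm{Pic}^{-s_h}(\Rs)$ is identified as the \emph{natural line bundle} $\Ss{\Rs}(-s_h)$, via the observation that if $Nh=0$ then $\rho_h^{\otimes N}$ is trivial, hence $\mathcal{L}_h^{\otimes N}\cong\Ss{\Rs}(\ell)$ for some integral cycle $\ell\in L$; your ``character of $H$ dual to $h$'' pins down the representation, not which point of the affine line $\mathrm{Pic}^{-s_h}(\Rs)\simeq\CC$ carries it.
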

In~\cite{Be} Behnke conjectured that every reflexive module on a cusp singularity admits a flat connection. Later Gustavsen and Ile~\cite{Gustavsen2} extended  this conjecture for any log-canonical surface singularity. Our last application (Theorem~\ref{theo:class.2}) is a partial positive answer to the  conjecture (valid for rank one modules).
\begin{theorem*}
Let $(X,x)$ be a cusp singularity. Then, any reflexive module of rank one admits a flat connection.
\end{theorem*}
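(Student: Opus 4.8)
The plan is to combine the explicit list of rank-one full sheaves furnished by Theorem~\ref{theo:class.general} with the Riemann--Hilbert correspondence, realising each such sheaf as the full (reflexive) extension of a rank-one local system on the smooth locus. The feature that distinguishes a cusp from the rational-homology-sphere situation of Theorem~\ref{theo:class.1} is that the link $\Sigma$ is a torus bundle over $S^1$, so $b_1(\Sigma)=1$; this produces a one-parameter family of characters which turns out to be exactly large enough to sweep out every holomorphic type allowed by the classification. Throughout I may, if necessary, pass to a resolution in which every component of $E$ is a $\PP^1$ (blowing up the node of the cycle when it has length one): since $\mathrm{Full}^1(\Rs)$ is an invariant of $(X,x)$, this loses nothing, and it places us in the situation where Theorem~\ref{theo:class.general} gives an equality.

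First I would phrase flatness representation-theoretically. By Riemann--Hilbert~\cite{Gustavsen1}, a rank-one reflexive module carries an integrable connection if and only if it underlies a rank-one local system on $X\setminus\{x\}\simeq\Sigma$, i.e.\ if and only if it lies in the image of the map
\[
\Theta\colon\ \mathrm{Hom}\!\big(H_1(\Sigma,\ZZ),\CC^*\big)\longrightarrow \mathrm{Pic}(\Rs),\qquad \rho\longmapsto (\pi^*M_\rho)^{\smvee\smvee},
\]
where $M_\rho$ is the reflexive module attached to $\rho$ (extended over $x$ by its reflexive hull). As the objects in the image of the forgetful functor are reflexive, hence full, one has $\mathrm{im}(\Theta)\subseteq\mathrm{Full}^1(\Rs)$ for free, and the whole content of the theorem is the reverse inclusion. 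For the bookkeeping I would record the link topology: for a cusp the (hyperbolic) monodromy $A\in SL(2,\ZZ)$ gives $H_1(\Sigma,\ZZ)\cong H\oplus\ZZ$ with $H\cong\coker(A-\Id)$ finite and $b_1(\Sigma)=1$, whence
\[
\mathrm{Hom}\!\big(H_1(\Sigma,\ZZ),\CC^*\big)\cong \widehat H\times\CC^*,
\]
the factor $\widehat H\cong H$ being the torsion characters and the factor $\CC^*=\{\rho(\gamma)\}$ the continuous parameter coming from $b_1(\Sigma)=1$. I would then check that $\Theta$ is graded over $H$: since the meridians $\delta_i$ of the $E_i$ satisfy $[\delta_i]=[E_i^*]$ in the torsion subgroup, the eigenvalues $\rho(\delta_i)=e^{2\pi i a_i}$ determine $c_1\bmod L$, so $\Theta$ carries the slice $\widehat H\times\CC^*$ with torsion part $h$ into the $h$-part of $\mathrm{Full}^1(\Rs)$ described by Theorem~\ref{theo:class.general}.

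It remains to prove surjectivity slice by slice, and this is the crux. For fixed $h$ one must show that the nonconstant holomorphic map $\lambda=\rho(\gamma)\mapsto\Theta(\rho_{h,\lambda})$ from $\CC^*$ fills out the predicted family: for $h\neq 0$ the full torsor $\mathrm{Pic}^{-s_h}(\Rs)$, and for $h=0$ the set $\{\Ss{\Rs}\}\cup\big(\mathrm{Pic}^{-Z_{min}}(\Rs)\setminus\{\Ss{\Rs}(-Z_{min})\}\big)$. The mechanism is that the holomorphic type of the extension is governed by the period pairing of the cyclic loop $\gamma$ against the minimally elliptic (dualizing) form; because $(X,x)$ is not rational this period is non-zero, so that as $\lambda$ ranges over $\CC^*$ the class $\Theta(\rho_{h,\lambda})$ attains every value in the corresponding $\mathrm{Pic}^0(\Rs)$-torsor. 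The $h=0$ slice is the delicate one: at $\lambda=1$ the monodromy is trivial and the full extension is $\Ss{\Rs}$, whereas for $\lambda\neq 1$ the cyclic monodromy forces the elliptic twist, placing $\Theta(\rho_{0,\lambda})$ in $\mathrm{Pic}^{-Z_{min}}(\Rs)$ and accounting precisely for the removal of $\Ss{\Rs}(-Z_{min})$ in Theorem~\ref{theo:class.general}. I expect this surjectivity---controlling the transcendental period map together with the Chern-class jump at $\lambda=1$---to be the main obstacle; the non-vanishing of the relevant period is exactly what fails in the rational-homology-sphere case of Theorem~\ref{theo:class.1}, where $b_1=0$, the character group is finite, and only one sheaf per family is flat. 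Granting it, $\mathrm{im}(\Theta)=\mathrm{Full}^1(\Rs)$, and every rank-one reflexive module on a cusp admits a flat connection.
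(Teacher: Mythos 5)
Your reduction is set up correctly: flatness is equivalent to lying in the image of $\Theta$, that image lands in $\mathrm{Full}^1(\Rs)$, and the $H$-grading of $\Theta$ that you assert via meridians is precisely the paper's Lemma~\ref{lemma:Rep.H2} ($\overline{c}_1\circ\Psi=\Phi$). But the step you yourself flag as the crux --- that for fixed $h$ the map $\lambda\mapsto\Theta(\rho_{h,\lambda})$ from the $\CC^*$ of characters fills out the whole family predicted by Theorem~\ref{theo:class.general} --- is never proved, and the mechanism you invoke does not give it. ``The period pairing of $\gamma$ against the dualizing form is non-zero because $(X,x)$ is not rational'' is a heuristic: you do not define this period map, you do not show that $\lambda\mapsto\Theta(\rho_{h,\lambda})$ is holomorphic, and, most importantly, even granting both, non-constancy does not imply surjectivity. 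For a cusp the relevant target is a torsor under $\mathrm{Pic}^0(\Rs)=H^1(\Rs,\Ss{\Rs})/H^1(\Rs,\ZZ)\cong\CC/\ZZ\cong\CC^*$ (not $\CC^{p_g}$, since $H^1(\Rs,\ZZ)\cong\ZZ$ here), and a non-constant holomorphic map $\CC^*\to\CC^*$ can omit a value (Picard). Note also that $\Theta$ itself is not a group homomorphism (e.g.\ $c_1(\Theta(\rho_1\rho_2))=-s_{h_1+h_2}$, which in general differs from $-s_{h_1}-s_{h_2}$), so no soft group-theoretic argument applied directly to $\Theta$ closes the gap either.

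The paper's proof avoids this parametrization problem entirely, and its trick is exactly what your argument is missing. First it shows that \emph{every} $\mathcal{G}\in\mathrm{Pic}^0(\Rs)$ is flat: for a cusp $h^1(\Rs,\CC)=h^1(\Rs,\Ss{\Rs})=1$ and the composite $H^1(\Rs,\ZZ)\to H^1(\Rs,\CC)\to H^1(\Rs,\Ss{\Rs})$ is injective, hence $H^1(\Rs,\CC)\to H^1(\Rs,\Ss{\Rs})$ is an isomorphism and $\mathrm{Pic}^0(\Rs)=H^1(\Rs,\Ss{\Rs})/H^1(\Rs,\ZZ)$ embeds into $H^1(\Rs,\CC/\ZZ)=\mathrm{Rep}^1_{\pi_1(\Rs)}$. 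This settles the slice $h=0$, where $\Sf{L}|_U\cong\mathcal{G}_L|_U$. For $h\neq0$ it takes a \emph{single} character $\rho$ with $\Phi(\rho)=h$ (it exists because $\Phi$ is onto $H$); by the classification, $\Sf{L}_\rho=(\pi^*\Psi(\rho))^{\smvee\smvee}$ lies in $\mathrm{Pic}^{-s_h}(\Rs)$, so an arbitrary $\Sf{L}\in\mathrm{Pic}^{-s_h}(\Rs)$ satisfies $\Sf{L}|_U\cong\Sf{L}_\rho|_U\otimes\mathcal{G}|_U$ with $\mathcal{G}\in\mathrm{Pic}^0(\Rs)$, and a tensor product of flat line bundles on $U$ is flat. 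The multiplicativity that fails for $\Theta$ does hold after restriction to $U$, which is all that flatness sees; note also that only the inclusion direction of Theorem~\ref{theo:class.general} is needed, so your preliminary blow-up to make all $E_v\cong\PP^1$ is unnecessary. To complete your proof you must either supply this torsor-plus-$\mathrm{Pic}^0$ argument (at which point the period map becomes superfluous) or carry out an honest construction and computation of the period map on each slice; neither is in your text.
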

The paper is organized as follows. In Section~\ref{Sec:Pre} we review some general properties of normal surface singularities, the divisor lattice structure and the Lipman cone, flat reflexive modules and full shaves. In Section~\ref{sec:rational.sing} we classify the rank one full sheaves for rational surfaces singularities using techniques based on the properties of the
 divisor lattice structure and of the Lipman cone. In Section~\ref{sec:min.ell} we generalize the techniques of the previous section to the case of minimally elliptic singularities.  In Section~\ref{sec:flat.noflat} we use our classification theorems to study the flat reflexive modules over minimally elliptic singularities.

\section{Preliminaries}
\label{Sec:Pre}
In this section we recall basics on reflexive modules, full sheaves and topological invariants of normal surface singularities. We assume basic familiarity with these objects, see~\cite{BrHe,Ne,Ishi,NeBook} for more details.

\subsection{Setting and notation. Normal singularities.} Throughout this article, we denote by $(X,x)$ the germ of a complex  analytic complex normal surface singularity, i.e., the germ of a complex surface such that its local ring of functions $\Ss{X,x}$ is integrally closed in its field of fractions. In this situation $X$ has a \emph{dualizing sheaf} $\omega_X$, and let $\omega_{X,x}$ be its stalk at $x\in X$, which is called the \emph{dualizing module} of the ring $\Ss{X,x}$ (see~\cite[Chapter~5~\S~3]{Ishi} for more details). We say that $(X,x)$ has a  \emph{Gorenstein} normal singularity if the dualizing module is isomorphic to $\Ss{X,x}$.

\subsection{Good resolutions and dual graphs}
Let $(X,x)$ be the germ of an analytic complex normal surface singularity. Let
\begin{equation*}
    \pi \colon \Rs\to X,
\end{equation*}
be a \emph{resolution of} $(X,x)$, i.e., a proper holomorphic map from a smooth surface $\Rs$ to a given representative of $(X,x)$ such that $\pi$ restricted to the complement of
 $\pi^{-1}(x)$ is biholomorphic. Sometimes we will require $\pi$ to be a \emph{good resolution}, which means that the exceptional divisor  $E:=\pi^{-1}(x)$ is a normal crossing divisor and each irreducible component of
$E$ is smooth. For any normal surface singularity there is always a good resolution, however it is
not unique. Given a good resolution, the \emph{dual graph} is a decorated graph $\Gamma$ constructed
as follows: the set of vertices, say $V$, is in bijection with the set of irreducible components of
$E$, say $\{E_v\}_{v\in V}$, two vertices $u$ and $v$ are connected by $k$ edges in $\Gamma$ if and
only if the corresponding components $E_u$ and $E_v$ have $k$ intersection points. Each vertex $v$ is decorated with two numbers: the self-intersection $E^2_v$ in $\Rs$ and the genus $g(E_v)$.

The \emph{intersection matrix} $M=(m_{u,v})_{u,v \in V}$ associated to the dual graph $\Gamma$ is the intersection matrix of the curves $\{E_v\}_{v\in V}$, i.e., $m_{u,v}=(E_u,E_v)$.
It is negative definite.

\subsection{The link}
Let us embed  $(X,x)$ in a certain  $(\CC^n,0)$. Via this embedding,
the \emph{link} $\Sigma$ of $(X,x)$ is defined as the intersection $\Sigma=X \cap \mathbb{S}_{\epsilon}^{2n-1}$,
where $\mathbb{S}_{\epsilon}^{2n-1}=\{z\in \CC^n\,:\, |z|=\epsilon\}$ and
 $\epsilon>0$ is small enough. The diffeomorphism type of the link does not depend on the embedding or on  $0<\epsilon\ll 1$. 

\subsection{The divisor lattice structure and the Lipman cone}
Let $(X,x)$ be the germ of an analytic complex normal surface singularity. Let $\pi \colon \Rs \to X$ be any resolution. Set $L:= H_2(\Rs,\ZZ)$, i.e., $L$ is the free abelian group generated by the classes $\{E_v\}_{v\in V}$. Denote also $L':=H_2(\Rs, \partial \Rs, \ZZ)$. Notice that $L'$ is the dual of $L$,  indeed by the Lefschetz–Poincar\'e duality we have
 the perfect pairing $H_2(\Rs,\ZZ) \otimes H_2(\Rs,\partial \Rs,\ZZ) \to \ZZ$, hence
\begin{equation*}
\Hom_{\ZZ}(L,\ZZ)\cong H_2(\Rs, \partial \Rs, \ZZ)=L'.
\end{equation*}
Since the intersection matrix $M$ is non-degenerate, the homological map $L \to L'$ is injective. Thus, by the homological long exact sequence of the pair $(\Rs,\partial \Rs)$ (where
$\partial \Rs\simeq \Sigma$) we have
\begin{equation}
\label{eq:suc.L.L'}
    0 \to L \to L' \to H_1(\partial \Rs,\ZZ) \to H_1(\Rs,\ZZ) \to 0.
\end{equation}
Since $H_1(\Rs,\ZZ)$ is free,  $H_1(\partial \Rs,\ZZ) \cong \mathrm{Tors} ( H_1(\partial \Rs,\ZZ) ) \oplus H_1(\Rs,\ZZ)$,  and the quotient $L'/L$ is identified with
\begin{equation*}
H:= \mathrm{Tors} \big( H_1(\partial \Rs,\ZZ) \big)= \mathrm{Tors} \left( H_1(\Sigma,\ZZ) \right).
\end{equation*}
We extend the intersection form $(,)$ from $L$ to $L_{\QQ}:=L \otimes \QQ$, which we denote by $(,)_\QQ$. Via $(,)_\QQ$ the lattice $L' \cong \Hom_{\ZZ}(L,\ZZ)$
can be identified with a lattice of rational cycles
\begin{equation*}
\{\ell'\in L_\QQ \, : \, (\ell',\ell)_\QQ \in \ZZ \text{ for any $\ell \in L$ }\} \subset L_\QQ.
\end{equation*}
Hence, we can identify $L'$ with $\bigoplus_{v\in V} \ZZ\langle E_v^*\rangle$, the lattice generated by the rational cycles $E_v^*\in  L_\QQ$ $(v\in V)$ defined via
\begin{equation*}
 (E_u^*,E_v)_\QQ = -\delta_{u,v} \, \text{(Kronecker delta) for any $u,v \in V$.}
\end{equation*}
Let $\ell'_1, \ell'_2  \in L_{\QQ}$ where $\ell'_j= \sum_v l_{jv}'E_v$ for $j\in \{1,2\}$. We consider the partial order in $L_{\QQ}$ giving by $\ell'_1 \geq \ell'_2$ if and only if $l_{1v}' \geq l_{2v}'$ for all $v\in V$. We set $\min\{\ell'_1, \ell'_2\}:=\sum_v \min\{l_{1v}',l_{2v}'\}E_v$.

Let $\ell' \in L'$. We denote its class in $H$ by $[\ell']$. The lattice $L'$ has a partition parametrized by $H$, where for any $h \in H$ we have
\begin{equation*}
    L_h':=\{\ell' \in L' \, | \, [\ell']=h\}.
\end{equation*}
Note that $L_0'=L$. Given any $h\in H$ we define $r_h:=\sum_v l_v' E_v$ as the unique element of $L_h'$ such that $0 \leq l_v' <1$. We define the \emph{rational Lipman cone} by
\begin{equation*}
    \mathcal{S}_{\QQ}:= \{\ell'\in L_{\QQ} \, | \, (\ell',E_v) \leq 0 \text{ for any $v\in V$}\}.
\end{equation*}
It is a cone generated over $\QQ_{\geq 0}$ by $E_v^*$. We set
\begin{equation*}
    \mathcal{S}':= \mathcal{S}_{\QQ} \cap L' \quad \text{and} \quad \mathcal{S}:= \mathcal{S}_{\QQ} \cap L.
\end{equation*}
Note that $\mathcal{S}'$ is the monoid  of anti-nef rational cycles of $L'$, it is generated over $\ZZ_{\geq 0}$ by the cycles $E^*_v$. The Lipman cone $\mathcal{S}'$ also has a natural equivariant partition indexed by $H$. We denote $\mathcal{S}_h'=\mathcal{S}' \cap L_h'$. The monoid $\mathcal{S}=\mathcal{S}_0'$ has the following properties:
\begin{enumerate}
    \item if $Z= \sum n_{v}E_{v} \in \mathcal{S}$ and $Z\neq 0$, then $n_v >0$ for all $v \in V$,
    \item if $Z_1,Z_2 \in \mathcal{S}$, then $Z_1+Z_2 \in \mathcal{S}$,
    \item if $Z_1,Z_2 \in \mathcal{S}$, then $\min\{Z_1,Z_2\}  \in \mathcal{S}$.
\end{enumerate}
Thus, $\mathcal{S} \setminus \{0\}$ has a unique minimal element $Z_{min}$ called the \emph{Artin's fundamental cycle (minimal cycle or numerical cycle)}. Similarly, for any $h \in H$, the set  $\mathcal{S}_h'$ has the following properties \cite{Ne,Ne1}:
\begin{enumerate}
    \item If $s_1,s_2 \in \mathcal{S}_h'$, then $s_1-s_2 \in L$ and $\min\{s_1,s_2\}\in \mathcal{S}_h'$,
    \item for any $h$ there exists a unique \emph{minimal cycle} $s_h:=\min \{\mathcal{S}_h'\}$,
    \item the set $\mathcal{S}_h'$ is a cone with \emph{vertex} $s_h$ in the sense that $\mathcal{S}_h'= s_h + \mathcal{S}_0'$.
\end{enumerate}

\begin{remark}
\label{remark:1} Clearly  $s_0=0$. In fact,
 $s_h \neq 0$ if and only if $h\neq0$.
\end{remark}
From now on, we will denote by $\lfloor\cdot \rfloor$ the integral part, that is
 $   \big\lfloor \sum_v r_vE_v  \big \rfloor :=  \sum_v \lfloor r_v \rfloor E_v.$

\begin{definition}
The {\em geometric genus} $p_g$ of $X$ is  the $\mathbb{C}$-vector space   dimension of $H^1(\Rs,\Ss{\Rs})$. It is independent of the choice of the resolution.
We say that $(X,x)$  has a \emph{rational singularity} if $p_g=0$.
\end{definition}
We denote by  $\Omega^2_{\Rs}$  the sheaf of holomorphic $2$-forms. The divisor of any meromorphic section of $\Omega^2_{\Rs}$ is called \emph{the canonical divisor};
it is  denoted by $K_{\Rs}$. The rational cycle $Z_K\in L'$ (supported on $E$)
that satisfies $(Z_K , E_v)_{\QQ} = -K_{\Rs} \cdot E_v$ for any $v \in V$ is called \emph{the canonical cycle}.
(It is independent of the choice of $K_{\Rs}$.)
 In the case of Gorenstein singularities, one proves that $Z_K\in L$,  see~\cite{Ne}.

 We define the Riemann-Roch expression
 \begin{equation*}
  \chi:L\to \ZZ, \text{ } \chi(\ell)=-(\ell, \ell-Z_K)_{\QQ}/2.
 \end{equation*}
By Artin's criterion \cite{Artin66}
$(X,x)$ is rational if and only if $\chi(\ell)>0$ for any $\ell>0$ $(\ell\in L)$.
For rational graphs (graphs satisfying Artin's criterion) one also has
$\chi(\ell)\geq 0$ for any $\ell\in L$. Furthermore, rational graphs are trees of $\PP^1$'s.

The next level of complexity of graphs is realized by elliptic graphs.

\begin{definition} The germ $(X,x)$ is called elliptic if $(X,x)$ is not rational but $\chi(l)\geq 0$ for any $l\in L$. By an equivalent definition, $(X,x)$ is elliptic if $\chi(Z_{min})=0$. (Cf. \cite{Ne,NeBook}.)
Both  topological criterions are  independent of the choice of the resolution.
\end{definition}
\begin{definition}
Let $(X,x)$ be an elliptic  normal surface singularity. Let $\pi \colon \Rs \to X$ be a resolution. A non-zero effective cycle $C$ supported on $E$ is called \emph{minimally elliptic cycle }
if $\chi(C)=0$ and for any $0 < l < C$, one has $\chi(l)>0$.
\end{definition}
Once a resolution is fixed, a minimally elliptic cycle always exists and it is unique.
\begin{definition}
We say that $(X,x)$ is a \emph{minimally elliptic singularity}, if the geometric genus of $(X,x)$ is one and $(X,x)$ is Gorenstein.
\end{definition}
Laufer proved the following topological characterization:  an elliptic  singularity is  minimally elliptic if and only if in the minimal resolution
 $C=Z_K = Z_{min}$, see~ \cite{Laufer77} or
 \cite[Theorem~7.2.15]{NeBook}.

 \vspace{1mm}

 As usual,  denote $H^1(\Rs,\Ss{\Rs}^*)$ by  $\mathrm{Pic}(\Rs)$.
Let  $c_1 \colon \mathrm{Pic}(\Rs) \to  H^2(\Rs,\ZZ) \cong  H_2(\Rs,\partial \Rs,\ZZ) =L'$
 denote the  `first Chern class morphism', and set
\begin{equation*}
    \mathrm{Pic}^{\ell'}(\Rs):= c_1^{-1}(\ell') \quad \quad \text{for any $\ell'\in L'$.}
\end{equation*}
It is a   $ \mathrm{Pic}^{0}(\Rs)\simeq \CC^{p_g}$ torsor.

The following vanishing theorems will be used several times.

\begin{theorem}[Generalized Grauert–Riemenschneider vanishing~{\cite[Theorem~6.4.3]{NeBook}}]
\label{Th:GGR}
 Let $(X,x)$ be a normal surface singularity. Let $\pi \colon \Rs \to X$ be any resolution. Let $\Sf{L}\in \mathrm{Pic}(\Rs)$
 such that $c_1(\Sf{L}(-K_{\Rs}))\in \Delta-\mathcal{S}_{\QQ}$ for some $\Delta \in L\otimes\QQ$ such that $\lfloor \Delta \rfloor = 0$. Then for any $\ell \in L_{>0}$ one has the vanishing $h^1(\ell,\Sf{L}|_{\ell})=0$. In particular, $h^1(\Rs,\Sf{L})=0$.
\end{theorem}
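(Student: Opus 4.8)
The plan is to prove the cycle-level vanishing $h^1(\ell,\Sf{L}|_{\ell})=0$ for every effective $\ell\in L_{>0}$ by induction on the total multiplicity $\sum_v\ell_v$, and then to deduce $h^1(\Rs,\Sf{L})=0$ from the theorem on formal functions. Writing $G:=c_1(\Sf{L}(-K_{\Rs}))\in L'$ (viewed as a rational cycle via $(,)_{\QQ}$), the hypothesis says exactly that $G-\Delta\in-\mathcal{S}_{\QQ}$, i.e. $(G-\Delta,E_v)_{\QQ}\ge 0$ for all $v\in V$, since membership in $-\mathcal{S}_{\QQ}$ is equivalent to having nonnegative intersection with every $E_v$.

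For the inductive step I would peel off a single component. Choosing a suitable $E_v\le \ell$ and setting $\ell'=\ell-E_v\ge 0$, I tensor the structure-sheaf sequence $0\to\Ss{\Rs}(-\ell')|_{E_v}\to\Ss{\ell}\to\Ss{\ell'}\to 0$ with the locally free sheaf $\Sf{L}$ to obtain
$$0\to \Sf{L}(-\ell')|_{E_v}\to \Sf{L}|_{\ell}\to \Sf{L}|_{\ell'}\to 0.$$
In the associated long exact cohomology sequence the term $h^1(\ell',\Sf{L}|_{\ell'})$ vanishes by the inductive hypothesis (it is trivially zero when $\ell'=0$, which is the base case $\ell=E_v$), so it suffices to prove $h^1(E_v,\Sf{L}(-\ell')|_{E_v})=0$. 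The restriction is a line bundle on the integral projective curve $E_v$ of degree $(c_1(\Sf{L})-\ell')\cdot E_v$, and by Riemann--Roch and Serre duality its $h^1$ vanishes as soon as this degree exceeds $\deg\omega_{E_v}=(K_{\Rs}+E_v)\cdot E_v$, because a rank-one torsion-free sheaf of negative degree on an integral curve has no global sections. Substituting $\ell'=\ell-E_v$ and using adjunction, this numerical condition simplifies to $(G-\ell,E_v)_{\QQ}>0$ for the component $E_v$ that is peeled off.

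The heart of the matter is therefore a combinatorial selection: given $\ell>0$, I must find $v\in\mathrm{supp}(\ell)$ with $(G-\ell,E_v)_{\QQ}>0$. Since $(G-\ell,E_v)_{\QQ}=(G-\Delta,E_v)_{\QQ}-(\ell-\Delta,E_v)_{\QQ}\ge -(\ell-\Delta,E_v)_{\QQ}$ by the hypothesis, it is enough to produce $v\in\mathrm{supp}(\ell)$ with $(\ell-\Delta,E_v)_{\QQ}<0$. Here the assumption $\lfloor\Delta\rfloor=0$ is used. First, for $v\in S:=\mathrm{supp}(\ell)$ the part of $\Delta$ supported away from $S$ contributes nonnegatively to $(\Delta,E_v)_{\QQ}$ (off-diagonal intersections are $\ge 0$), so $(\ell-\Delta,E_v)_{\QQ}\le(\ell-\Delta_S,E_v)_{\QQ}$, where $\Delta_S$ is the restriction of $\Delta$ to $S$; it thus suffices to find $v\in S$ with $(\ell-\Delta_S,E_v)_{\QQ}<0$. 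Now $Q:=\ell-\Delta_S$ is a \emph{strictly} positive rational cycle on $S$, because $\ell_v\ge 1>\Delta_v$ for $v\in S$. If $(Q,E_v)_{\QQ}\ge 0$ held for all $v\in S$, then $(Q,Q)_{\QQ}=\sum_{v\in S}Q_v\,(Q,E_v)_{\QQ}\ge 0$, contradicting the negative definiteness of the intersection form restricted to $\bigoplus_{v\in S}\QQ E_v$. Hence some $v\in S$ has $(Q,E_v)_{\QQ}<0$, and that is the component to peel. The role of $\lfloor\Delta\rfloor=0$ is precisely to guarantee the strict positivity of $Q$ on its support, which turns the negative-definite inequality into a genuine contradiction; getting this fractional bookkeeping exactly right is the step I expect to be the main obstacle.

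Finally, the global statement $h^1(\Rs,\Sf{L})=0$ follows from the cycle-level vanishing: by the theorem on formal functions the completion of $R^1\pi_*\Sf{L}$ at $x$ is $\varprojlim_n h^1(nE,\Sf{L}|_{nE})$, where $E=\sum_v E_v$ is the reduced exceptional divisor. Each $nE$ lies in $L_{>0}$, so every term vanishes by what precedes, whence $R^1\pi_*\Sf{L}=0$; since $X$ is a Stein germ, the Leray spectral sequence then gives $h^1(\Rs,\Sf{L})=0$.
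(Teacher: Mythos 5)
Your proposal is correct. Note that the paper itself does not prove this statement: Theorem~\ref{Th:GGR} is quoted as a known result from \cite[Theorem~6.4.3]{NeBook}, so there is no internal proof to compare against; your argument is essentially the standard one from that reference. The three ingredients are all sound and correctly assembled: (i) the reduction of the inductive step to the degree inequality $(G-\ell,E_v)_{\QQ}>0$ via the decomposition sequence $0\to \Sf{L}(-\ell')|_{E_v}\to \Sf{L}|_{\ell}\to \Sf{L}|_{\ell'}\to 0$ and Serre duality on the integral Gorenstein curve $E_v$ (this is the right level of generality, since the resolution is not assumed good, so $E_v$ may be singular); (ii) the combinatorial selection of the component to peel, where you correctly isolate the two uses of $\lfloor\Delta\rfloor=0$ (effectivity of $\Delta$, to discard the off-support part $\Delta_{S^c}$ using nonnegativity of off-diagonal intersections, and $\Delta_v<1$, to make $Q=\ell-\Delta_S$ strictly positive so that negative definiteness of $(\,,\,)$ restricted to $\bigoplus_{v\in S}\QQ E_v$ yields a component with $(Q,E_v)_{\QQ}<0$); and (iii) the passage from the cycle-level vanishing to $h^1(\Rs,\Sf{L})=0$ via cofinality of $\{nE\}_n$, the theorem on formal functions, and the Leray spectral sequence over the Stein base.
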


\begin{theorem}[Lipman's vanishing~{\cite[Theorem~11.1]{Lip}}]
\label{Th:Lipman.Vanish}
 Let $(X,x)$ be a rational normal surface singularity. Let $\pi \colon \Rs \to X$ be any resolution. If $\Sf{L}\in \mathrm{Pic}(\Rs)$ with $-c_1(\Sf{L})\in \mathcal{S}'$, then $h^1(\Rs,\Sf{L})=0$.
\end{theorem}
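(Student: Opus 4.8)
The plan is to reduce the statement to a vanishing on effective cycles supported on $E$, and then to prove that vanishing by an induction along a carefully ordered computation sequence, with rationality entering precisely to control the intersection numbers that appear in the degree estimates.

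First I would reduce to cycles. Since $R^1\pi_*\Sf{L}$ is coherent and supported at $x$, the theorem on formal functions gives $\dim_\CC H^1(\Rs,\Sf{L})=\lim_Z h^1(Z,\Sf{L}|_Z)$, the limit running over effective cycles $Z>0$ supported on $E$. Moreover each restriction map $H^1(\Rs,\Sf{L})\to H^1(Z,\Sf{L}|_Z)$ is surjective (the obstruction $H^2(\Rs,\Sf{L}(-Z))$ vanishes because the fibres of $\pi$ are one-dimensional), so $h^1(Z,\Sf{L}|_Z)$ is monotone non-decreasing in $Z$ and stabilizes to $h^1(\Rs,\Sf{L})$. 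It therefore suffices to find a cofinal family of cycles on which $h^1$ vanishes, and I will use $\{mZ_{min}\}_{m\ge 1}$, which is cofinal because $Z_{min}\ge\sum_v E_v$. So the goal becomes $h^1(mZ_{min},\Sf{L}|_{mZ_{min}})=0$ for all $m$.

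Next is the inductive engine. Write $\ell':=-c_1(\Sf{L})\in\mathcal{S}'$, so $(c_1(\Sf{L}),E_v)=-(\ell',E_v)\ge 0$ for every $v$. Along a computation sequence $0=Z_0<Z_1<\cdots<Z_t=mZ_{min}$, $Z_{i+1}=Z_i+E_{v_{i+1}}$, the ideal-sheaf sequence tensored with $\Sf{L}$ reads
\[
0\to \Sf{L}|_{E_{v_{i+1}}}\otimes\Ss{E_{v_{i+1}}}(-Z_i)\to \Sf{L}|_{Z_{i+1}}\to \Sf{L}|_{Z_i}\to 0.
\]
Because rational graphs are trees of $\PP^1$'s, $E_{v_{i+1}}\cong\PP^1$, and $H^1(\PP^1,\Ss{\PP^1}(d))=0$ for $d\ge -1$. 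The degree of the left-hand term is $(c_1(\Sf{L}),E_{v_{i+1}})-(Z_i,E_{v_{i+1}})\ge -(Z_i,E_{v_{i+1}})$, which is $\ge -1$ as soon as $(Z_i,E_{v_{i+1}})\le 1$. Hence, inducting on $i$ from the base $h^1(E_{v_1},\Sf{L}|_{E_{v_1}})=0$, the vanishing $h^1(mZ_{min},\Sf{L}|_{mZ_{min}})=0$ follows, provided the sequence can be chosen with $(Z_i,E_{v_{i+1}})\le 1$ at every step.

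The main point — and where rationality is genuinely used — is the construction of such a sequence, which I would obtain by concatenating Laufer's computation sequences for $Z_{min}$. Laufer's algorithm builds $Z_{min}$ by successively adding a vertex $E_v$ with $(Z_i,E_v)>0$; from the Riemann–Roch identity $\chi(Z_i+E_v)-\chi(Z_i)=1-(Z_i,E_v)$, together with Artin's criterion ($\chi\ge 1$ on effective cycles) and $\chi(E_{v_1})=1$, the non-increasing quantity $\chi(Z_i)$ is forced to be constantly $1$, so $(Z_i,E_{v_{i+1}})=1$ at each step. Now concatenate $m$ copies: having reached $kZ_{min}$ with $0\le k<m$, keep adding the vertices of a Laufer sequence $D_j$ for the $(k+1)$-st copy; the current cycle is $kZ_{min}+D_j$ with $(D_j,E_{v_{j+1}})=1$, and since $Z_{min}\in\mathcal{S}'$ is anti-nef we have $(kZ_{min},E_{v_{j+1}})\le 0$, whence $(kZ_{min}+D_j,E_{v_{j+1}})\le 1$. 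This yields a computation sequence reaching $mZ_{min}$ with $(Z_i,E_{v_{i+1}})\le 1$ throughout, closing the induction. The delicate point is exactly this ordering: peeling components in an arbitrary order will in general violate the degree bound, and it is Laufer's sequence combined with the anti-nefness of $Z_{min}$ — both consequences of rationality — that makes the bound hold uniformly.
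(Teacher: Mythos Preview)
The paper does not give its own proof of this statement: it is quoted as Lipman's vanishing with a citation to \cite[Theorem~11.1]{Lip} and used as a black box. There is therefore no paper-proof to compare against.

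Your argument is correct and is in fact a clean modern reworking of the classical proof. The three ingredients are exactly right: (i) the formal functions reduction to $h^1$ on thickenings $mZ_{min}$ (the surjectivity of restriction follows since $H^2$ vanishes on a one-dimensional fibre, and $\{mZ_{min}\}$ is cofinal because $Z_{min}$ has full support); (ii) the standard ideal-sheaf exact sequence and the $\PP^1$-vanishing, which reduces everything to the intersection bound $(Z_i,E_{v_{i+1}})\le 1$; (iii) obtaining that bound by concatenating Laufer sequences for $Z_{min}$ and using anti-nefness of $Z_{min}$ for the cross-terms. The $\chi$-bookkeeping you invoke is precisely Laufer's rationality criterion, which the paper itself records just after Lemma~\ref{lemma:computation.seq}. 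One small quibble in the last sentence: anti-nefness of $Z_{min}$ is by definition (it is the minimal nonzero element of $\mathcal{S}$), not a consequence of rationality; the rationality is what forces all $E_v\cong\PP^1$ and $(Z_i,E_{v_{i+1}})=1$ along the Laufer sequence.
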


\subsection{Generalized Laufer's algorithm}\cite[Lemma~7.4]{Ne1}\label{ss:GenLau}
For any $\ell'\in L'$, there exists a unique minimal element $s(\ell') \in \mathcal{S}'$ such that $s(\ell') \geq \ell'$ and $s(\ell')-\ell' \in L$. Furthermore, $s(\ell')$ can be constructed by a computation sequence $\{x_i\}_{i=1}^t$ as follows:
\begin{enumerate}
\item set $x_0:=\ell'$,
    \item if $x_i$ is already constructed and $x_i \not \in \mathcal{S}'$, then there exists some $E_{v_i}$ such that $(x_i, E_{v_i})>0$. Then define $x_{i+1}:=x_i+E_{v_i}$ and repeat the algorithm.
\end{enumerate}
The procedure stops after finitely many steps, and the last term $x_t$ is $s(\ell')$.

Laufer's algorithm will be used in several different situations. E.g., one might consider a line bundle $\mathcal{L}\in {\rm Pic}^{-c}(\Rs)$, and then run the algorithm starting with
$c$ and ending with $s(c)$: $x_0=c$,  $x_{i+1}=x_i+E_{v_i}$, $(x_i, E_{v_i})>0$. Then in the cohomological long exact sequence associated with
\begin{equation}
\label{eq:exact.seq.lemma.comp.seq2}
   0 \to \mathcal{L}(c-x_{i+1}) \to \mathcal{L}(c-x_{i}) \to
   \mathcal{L}(c-x_{i})|_{E_{v_i}}\to 0,
\end{equation}
the Chern class of  $\mathcal{L}(c-x_{i})|_{E_{v_i}}$ is negative, so
$H^0(E_{v_i}, \mathcal{L}(c-x_{i})|_{E_{v_i}})=0$.
Hence we have the following statements proved inductively along the computation sequence.

\begin{lemma}
\label{lemma:computation.seq}
Let $(X,x)$ be the germ of a normal surface singularity. Let $\pi \colon \Rs  \to X$ be a resolution and fix  $\Sf{L} \in \mathrm{Pic}^{-c}(\Rs)$.  Then

(a) the natural map
$H^0(\Rs,\Sf{L}(c-s(c))) \to H^0(\Rs,\Sf{L})$ is an isomorphism,

(b) $h^1(\Rs, \Sf{L})=h^1(\Rs, \Sf{L}(c-s(c))+\sum_i h^1(E_{v_i},  \Sf{L}(c-x_i))$.

In particular, if $(X,x)$ is rational then $h^1(\Rs, \Sf{L}(c-s(c))=0$ by Lipman's vanishing, hence $h^1(\Rs, \Sf{L})=\sum_i h^1({\mathcal O}_{{\mathbb P}^1}(  -(x_i, E_{v_i})))$.
\end{lemma}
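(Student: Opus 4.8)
The plan is to peel off the exceptional components one at a time along the computation sequence $x_0=c, x_1,\dots, x_t=s(c)$ of Section~\ref{ss:GenLau}, feeding the short exact sequence~\eqref{eq:exact.seq.lemma.comp.seq2} into the long exact sequence in cohomology at each step. The whole argument rests on the vanishing $H^0(E_{v_i},\Sf{L}(c-x_i)|_{E_{v_i}})=0$ already recorded before the statement; this holds because $c_1(\Sf{L}(c-x_i))=-x_i$, so the restriction to $E_{v_i}$ has degree $(-x_i,E_{v_i})=-(x_i,E_{v_i})<0$ by the defining property $(x_i,E_{v_i})>0$ of the sequence, and a negative-degree line bundle on an integral projective curve has no sections.

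For part (a), I would look at the left end of the long exact sequence associated to~\eqref{eq:exact.seq.lemma.comp.seq2}:
\[
0 \to H^0(\Rs,\Sf{L}(c-x_{i+1})) \to H^0(\Rs,\Sf{L}(c-x_i)) \to H^0(E_{v_i},\Sf{L}(c-x_i)|_{E_{v_i}}).
\]
Since the third term vanishes, the inclusion $H^0(\Sf{L}(c-x_{i+1}))\hookrightarrow H^0(\Sf{L}(c-x_i))$ is an isomorphism. Composing these isomorphisms for $i=0,\dots,t-1$ identifies $H^0(\Sf{L}(c-s(c)))$ with $H^0(\Sf{L})$, and since $s(c)-c$ is effective the composite is precisely the natural map induced by the inclusion of sheaves $\Sf{L}(c-s(c))\hookrightarrow\Sf{L}$. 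This proves (a).

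For part (b), I would continue the same long exact sequence one term further. Using $H^0(E_{v_i},\Sf{L}(c-x_i)|_{E_{v_i}})=0$ again, I get
\[
0 \to H^1(\Sf{L}(c-x_{i+1})) \to H^1(\Sf{L}(c-x_i)) \to H^1(E_{v_i},\Sf{L}(c-x_i)|_{E_{v_i}}) \to H^2(\Sf{L}(c-x_{i+1})).
\]
The one point that needs care — and the only genuine obstacle — is the surjectivity of the middle map onto the $H^1$ of the restriction, i.e.\ the vanishing of $H^2(\Rs,\Sf{L}(c-x_{i+1}))$. This is a standard feature of the germ setting: the fibres of $\pi$ are at most one-dimensional, so $R^2\pi_*$ vanishes on coherent sheaves, and since $X$ is (represented by) a Stein germ this forces $H^2(\Rs,\Sf{F})=0$ for every coherent $\Sf{F}$. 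Granting this, each step yields
\[
h^1(\Rs,\Sf{L}(c-x_i)) = h^1(\Rs,\Sf{L}(c-x_{i+1})) + h^1(E_{v_i},\Sf{L}(c-x_i)|_{E_{v_i}}),
\]
and telescoping over $i=0,\dots,t-1$ (with $x_0=c$, $x_t=s(c)$) gives exactly the formula in (b).

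Finally, for the ``in particular'' clause I would invoke the vanishing theorems. When $(X,x)$ is rational, $-c_1(\Sf{L}(c-s(c)))=s(c)\in\mathcal{S}'$, so Lipman's vanishing (Theorem~\ref{Th:Lipman.Vanish}) gives $h^1(\Rs,\Sf{L}(c-s(c)))=0$, killing the first summand in (b). Moreover rational graphs are trees of $\mathbb{P}^1$'s, so each $E_{v_i}\cong\mathbb{P}^1$ and the restriction is $\Sf{L}(c-x_i)|_{E_{v_i}}\cong\mathcal{O}_{\mathbb{P}^1}(-(x_i,E_{v_i}))$; substituting this into (b) yields $h^1(\Rs,\Sf{L})=\sum_i h^1(\mathcal{O}_{\mathbb{P}^1}(-(x_i,E_{v_i})))$, as claimed.
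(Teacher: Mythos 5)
Your proof is correct and takes essentially the same route as the paper: the paper's argument (given only as a sketch just before the statement) is precisely this induction along the computation sequence, feeding the short exact sequence~\eqref{eq:exact.seq.lemma.comp.seq2} into the long exact cohomology sequence and using $H^0(E_{v_i},\Sf{L}(c-x_i)|_{E_{v_i}})=0$ from the negativity of the degree $-(x_i,E_{v_i})$. Your only addition is to make explicit the vanishing $H^2(\Rs,\Sf{F})=0$ for coherent $\Sf{F}$ (via $R^2\pi_*=0$ and the Steinness of $X$), which is exactly the right justification for the surjectivity needed in part (b) and is left implicit in the paper.
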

The last sentence shows that in rational case, $h^1(\Rs,\Sf{L})=0$ if and only if along the
computation sequence connecting $c$ and $s(c)$ at all the steps $(x_i, E_{v_i})=1$ holds.

This can be compared with Laufer's criterion of rationality \cite{Laufer72}. For any singularity
$(X,x)$ with resolution $\Rs$ (and all $E_v$ rational) consider the computation sequence starting
with one of the exceptional curves, say $E_{v_0}$,  and ending with $s(E_{v_0})=Z_{min}$.
Then $(X,x)$ is rational if and only if $h^1(\mathcal{O}_{Z_{min}})=0$ if and only
if along the sequence $(x_i,E_{v_i})=1$.

Part {\it (a)} of the above theorem will be used e.g. when $c=r_h$ and $s(r_h)=s_h$
 for some $h\in H$.



\subsection{Flat and reflexive modules}
Let $X$ be a normal variety. Let $\Homs_{\Ss{X}}(\cdot,\cdot)$
be the sheaf theoretical  Hom
functor.
The dual of an $\Ss{X}$-module $M$ is denoted by $M^{\smvee}:=\Homs_{\Ss{X}}(M,\Ss{X})$. An $\Ss{X}$-module $M$ is called \emph{reflexive} if the natural homomorphism from $M$ to $M^{\smvee \smvee}$ is an isomorphism (see~\cite[Definition~5.1.12]{Ishi}). We denote by $\mathrm{Cl}(X,x)$ the \emph{local divisor class group} of $(X,x)$ (see~\cite[Chapter~6]{NeBook}). The group $\mathrm{Cl}(X,x)$ can also be interpreted as the group of isomorphism classes of reflexive sheaves on $(X,x)$ of rank one \cite{Sakai}.

Let $M$ be a coherent $\Ss{X}$-module. Following~\cite{Gustavsen1} a \emph{connection} on $M$ is an $\Ss{X}$-linear map
\begin{equation*}
    \nabla : \mathrm{Der}_{\CC}(\Ss{X})\to \mathrm{End}_{\CC}(M),
\end{equation*}
which for all $f\in \Ss{X}$, $m\in M$ and $D\in \mathrm{Der}_{\CC}(\Ss{X})$ satisfies the \emph{Leibniz rule}
\begin{equation*}
    \nabla(D)(fm)=D(f)m+f \nabla(D)(m).
\end{equation*}
A connection $\nabla$ is called \emph{integrable or flat} if it is a $\CC$-Lie-algebra homomorphism.
Denote by:
\begin{align*}
    \mathrm{Ref}_{X} &:= \text{the category of reflexive $\Ss{X}$-modules},\\
    \mathrm{Ref}^{\nabla}_{X} &:= \text{the category of pairs $(M,\nabla)$ where $M$ is a reflexive $\Ss{X}$-module and $\nabla$ is integrable},\\
    \mathrm{Rep}_{\pi_1(\Sigma)} &:= \text{the category of complex finite dimensional representations of $\pi_1(\Sigma)$}.
\end{align*}
By~\cite{Gustavsen1} there is an equivalence
\begin{equation*}
    \mathrm{Ref}^{\nabla}_{X} \cong \mathrm{Rep}_{\pi_1(\Sigma)}.
\end{equation*}
In general, the {\it forgetful functor}
\begin{equation*}
    \mathrm{Ref}^{\nabla}_{X} \to \mathrm{Ref}_{X},
\end{equation*}
is not an equivalence of categories. Moreover, the forgetful functor may not be essentially surjective, i.e., the forgetful functor may not be onto on objects. This problem has an easier reformulation in the case of reflexive modules of rank one. Denote by
\begin{align*}
    \mathrm{Ref}^1_{X} &:= \text{the category of reflexive $\Ss{X}$-modules of rank one},\\
        \mathrm{Ref}^{1,\nabla}_{X} &:= \text{the category of pairs $(M,\nabla)$ where $M$ is a reflexive $\Ss{X}$-module of rank one}\\
        &\,\quad \,\text{and $\nabla$ is integrable},\\
    \mathrm{Rep}^1_{\pi_1(\Sigma)} &:= \text{the category of complex one dimensional representations of $\pi_1(\Sigma)$}.
\end{align*}

Let $\rho\in \mathrm{Obj}(\mathrm{Rep}^1_{\pi_1(\Sigma)})$. Thus, we have $\rho \colon \pi_1(\Sigma) \to \mathrm{G}L(1,\CC)=\CC^*$ where $\CC^*$ is the multiplicative subgroup of $\CC$.
Since $\CC^*$ is abelian,
\begin{equation}
\label{eq:property.abelian}
    \Hom(\pi_1(\Sigma),\CC^*) = \Hom(\pi_1(\Sigma)_{\mathrm{ab}},\CC^*) = \Hom(H_1(\Sigma,\ZZ),\CC^*).
\end{equation}
By the Universal Coefficient Theorem we get
\begin{equation}
\label{eq:universal.coef.th}
    \Hom(H_1(\Sigma,\ZZ),\CC^*) \cong H^1(\Sigma,\CC^*).
\end{equation}
Hence, by~\eqref{eq:property.abelian} and~\eqref{eq:universal.coef.th}
 $\rho \in H^1(\Sigma,\CC^*)$. Replacing $\CC^*$ by $\CC/\ZZ$,  the exact sequence
\begin{equation*}
    0 \to \ZZ \to \CC \to \CC/\ZZ \cong \CC^* \to 0,
\end{equation*}
induces  the following cohomological long exact sequence
\begin{equation}
\label{eq:long.sequence.coh.Sigma}
    \dots \to H^1(\Sigma, \CC) \to H^1(\Sigma, \CC/\ZZ) \stackrel{\Phi}{\longrightarrow}
     H^2(\Sigma, \ZZ) \to H^2(\Sigma, \CC) \to \dots
\end{equation}
By~\eqref{eq:long.sequence.coh.Sigma},  the image of $\Phi$ is torsion. Hence, it has a factorization
through
\begin{equation*}
    \Phi \colon  H^1(\Sigma, \CC/\ZZ) \to \mathrm{Tors}\left(H^2(\Sigma, \ZZ)\right)\cong H,
    \ \ \ \rho\mapsto\Phi(\rho)\in H.
\end{equation*}
On the other hand,
by the equivalence between $\mathrm{Ref}^{1,\nabla}_{X}$ and $\mathrm{Rep}^1_{\pi_1(\Sigma)}$, let $(M_{\rho},\nabla_{\rho})$ be the pair in $\mathrm{Obj}(\mathrm{Ref}^{1,\nabla}_{X})$ associated to $\rho$. Hence, in this reformulation,
the  {\it forgetful morphism} is
\begin{align*}
    \Psi \colon H^1(\Sigma, \CC/\ZZ) \to \mathrm{Cl}(X,x),\ \ \
    \rho =(M_\rho, \nabla_\rho)\mapsto \Psi(\rho)=M_\rho.
\end{align*}
Note also that the groups $H$ and $\mathrm{Cl}(X,x)$ are related by the following long exact sequence
\begin{equation}
\label{eq:def.c1.overline}
	\begin{gathered}
	\xymatrix{ 0 \ar[r]& H^1(\Sigma,\ZZ) \ar[r]& \CC^{p_g} \ar[r]& \mathrm{Cl}(X,x) \ar[r]^(0.65){\overline{c}_1}& H \ar[r]& 0,
	}
	\end{gathered}
\end{equation}
where $\overline{c}_1$ assigns to a Weil divisor the homological class ot its intersection with the link. In Appendix  we will prove that these maps fit together in a commutative diagram:
\begin{lemma}\label{lemma:Rep.H2}
 $\overline{c}_1\circ \Psi=\Phi$.\end{lemma}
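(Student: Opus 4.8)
The plan is to realise both $\Phi(\rho)$ and $\overline{c}_1(\Psi(\rho))$ as the (necessarily torsion) first Chern class of one and the same topological line bundle, read off on the link, and then to identify the relevant restriction map with the natural projection $L'\to L'/L=H$. First I would recall, from the equivalence $\mathrm{Ref}^{1,\nabla}_{X}\cong \mathrm{Rep}^1_{\pi_1(\Sigma)}$, how $M_\rho=\Psi(\rho)$ is built: the representation $\rho$ determines a flat line bundle $\mathcal L_\rho$ on the deleted neighbourhood $X^\ast:=X\setminus\{x\}$, which deformation retracts onto $\Sigma$, and $M_\rho=j_\ast\mathcal L_\rho$ for $j\colon X^\ast\hookrightarrow X$; in particular $M_\rho|_{X^\ast}=\mathcal L_\rho$. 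On the other hand, $\Phi(\rho)\in\mathrm{Tors}(H^2(\Sigma,\ZZ))\cong H$ is, by the very definition of $\Phi$ as the connecting map of $0\to\ZZ\to\CC\to\CC/\ZZ\to0$, exactly the topological first Chern class $c_1(\mathcal L_\rho)\in H^2(\Sigma,\ZZ)$, whose image lies in the torsion subgroup.

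Next I would pass to a resolution $\pi\colon\Rs\to X$ and choose any line bundle $\tilde{\mathcal M}_\rho\in\mathrm{Pic}(\Rs)$ extending $\mathcal L_\rho$ across $E$ (for instance the full sheaf attached to $M_\rho$, whose restriction to $\Rs\setminus E\cong X^\ast$ is $\mathcal L_\rho$). Since $\partial\Rs\cong\Sigma$ sits inside $\Rs\setminus E$, we have $\tilde{\mathcal M}_\rho|_{\partial\Rs}=\mathcal L_\rho$, and naturality of $c_1$ under restriction to the boundary gives the commutative square
\[
\begin{CD}
\mathrm{Pic}(\Rs) @>{c_1}>> H^2(\Rs,\ZZ) \\
@V{(\,\cdot\,)|_{\partial\Rs}}VV @VV{r}V \\
\mathrm{Pic}(\partial\Rs) @>{c_1}>> H^2(\partial\Rs,\ZZ),
\end{CD}
\]
so that $r\bigl(c_1(\tilde{\mathcal M}_\rho)\bigr)=c_1(\mathcal L_\rho)=\Phi(\rho)$.

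The crux is to identify $r$ with the projection onto $H$ and to match the outcome with $\overline c_1(M_\rho)$. Under the Poincaré–Lefschetz identifications $H^2(\Rs,\ZZ)\cong H_2(\Rs,\partial\Rs,\ZZ)=L'$ and $H^2(\partial\Rs,\ZZ)\cong H_1(\partial\Rs,\ZZ)$, the restriction $r$ becomes the connecting homomorphism $L'\to H_1(\partial\Rs,\ZZ)$ of the pair $(\Rs,\partial\Rs)$, i.e. the arrow appearing in \eqref{eq:suc.L.L'}, whose image is $L'/L=H$; hence $\Phi(\rho)=\bigl[c_1(\tilde{\mathcal M}_\rho)\bigr]\in H$. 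Finally, by definition $\overline c_1$ sends $M_\rho$ to the homology class of the intersection of its divisor with $\Sigma$, and this intersection class is precisely the image of $c_1(\tilde{\mathcal M}_\rho)\in H_2(\Rs,\partial\Rs)$ under the same boundary map $\partial\colon H_2(\Rs,\partial\Rs)\to H_1(\partial\Rs)$ (the boundary of the relative cycle Poincaré dual to $c_1(\tilde{\mathcal M}_\rho)$ meets $\partial\Rs$ along the strict transform of that divisor). Combining the three steps yields $\overline c_1(\Psi(\rho))=\bigl[c_1(\tilde{\mathcal M}_\rho)\bigr]=\Phi(\rho)$.

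I expect the main obstacle to be exactly this last piece of bookkeeping: verifying that the three incarnations of $H$ — as $L'/L$, as $\mathrm{Tors}\,H_1(\Sigma,\ZZ)$, and as $\mathrm{Tors}\,H^2(\Sigma,\ZZ)$ — are compatibly identified, so that (i) the cohomological restriction $r$ genuinely coincides with the homological boundary map of \eqref{eq:suc.L.L'}, and (ii) the "intersection with the link" defining $\overline c_1$ agrees with $\partial\circ c_1$. One must also make sure that the chosen extension $\tilde{\mathcal M}_\rho$ really restricts to $\mathcal L_\rho$ on $\partial\Rs$, which is where the description $M_\rho=j_\ast\mathcal L_\rho$ enters. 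Once these duality normalisations are pinned down, the statement reduces to the naturality of the first Chern class already displayed in the square.
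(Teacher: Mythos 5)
Your proposal is correct and takes essentially the same route as the paper's own proof: both realize $\Phi(\rho)$ as the (torsion) topological first Chern class of the flat line bundle determined by $\rho$ near the link, extend that bundle across $E$ by the full sheaf $(\pi^*\Psi(\rho))^{\smvee\smvee}$, and then conclude by naturality of $c_1$ under restriction together with the identification of the restriction map $H^2(\Rs,\ZZ)\to H^2(\Sigma,\ZZ)$ with the projection $L'\to L'/L=H$. The only cosmetic difference is that the paper organizes these steps as three commutative squares involving topological Picard groups on $U=\Rs\setminus E$ (which deformation retracts onto $\Sigma$), whereas you restrict to $\partial\Rs\cong\Sigma$ and make the Poincar\'e--Lefschetz bookkeeping explicit.
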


\subsection{Full sheaves} Let $(X,x)$ be the germ of a normal surface singularity and
$ \pi \colon \Rs \to X $
be a resolution.
\begin{definition}
Let $\Sf{F}$ be a sheaf on $\Rs$. We say that $\Sf{F}$ is \emph{generically generated by global sections} if it is generated by global sections except in a finite set.
\end{definition}
Recall, the following definition of full sheaves from~\cite[Definition~1.1]{Ka}.
\begin{definition}
An $\Ss{\Rs}$-module $\Sf{M}$ is called \emph{full} if there is a reflexive $\Ss{X}$-module $M$ such that
$\Sf{M} \cong \left(\pi^* M\right)^{\smvee \smvee}$. We call $\Sf{M}$ the full sheaf associated to $M$.
\end{definition}
The following characterization of full sheaves will be very important in the following sections.
\begin{proposition}[{\cite[Proposition~1.2]{Ka}}]\label{fullcondiciones}
A locally free sheaf $\Sf{M}$ on $\Rs$ is full if and only if
\begin{enumerate}
\item the sheaf $\Sf{M}$ is generically generated by global sections.
\item The natural map $H^1_E(\Rs,\Sf{M}) \to H^1(\Rs,\Sf{M})$ is injective.
\end{enumerate}
If $\Sf{M}$ is the full sheaf associated to $M$, then $\pi_* \Sf{M}=M$.
\end{proposition}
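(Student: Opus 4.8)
The plan is to recast both conditions through the identification $\Rs\setminus E\cong X\setminus\{x\}$ (on which $\pi$ is biholomorphic) together with the local cohomology exact sequence
\[
0\to H^0_E(\Rs,\Sf{M})\to H^0(\Rs,\Sf{M})\to H^0(\Rs\setminus E,\Sf{M})\to H^1_E(\Rs,\Sf{M})\to H^1(\Rs,\Sf{M}).
\]
Since $\Sf{M}$ is locally free it has no sections supported on $E$, so $H^0_E(\Rs,\Sf{M})=0$, and condition (2) is therefore \emph{equivalent} to surjectivity of the restriction $H^0(\Rs,\Sf{M})\to H^0(\Rs\setminus E,\Sf{M})$. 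The first key step is to read this on $X$: because $\Rs\setminus E\cong X\setminus\{x\}$ one has $H^0(\Rs\setminus E,\Sf{M})=H^0(X\setminus\{x\},\pi_*\Sf{M})$ and $H^0(\Rs,\Sf{M})=(\pi_*\Sf{M})_x$, so on the Stein germ (where $\pi_*\Ss{\Rs}=\Ss{X}$ and higher coherent cohomology vanishes) the cokernel of this restriction is exactly $H^1_{\{x\}}(X,\pi_*\Sf{M})$. As $\pi_*\Sf{M}$ is torsion free, $H^0_{\{x\}}(X,\pi_*\Sf{M})=0$ automatically; hence condition (2) holds if and only if $\mathrm{depth}_x(\pi_*\Sf{M})\ge 2$, i.e. $\pi_*\Sf{M}$ satisfies Serre's $S_2$ condition, i.e. (over the normal germ) $\pi_*\Sf{M}$ is reflexive.

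Granting this dictionary, I would prove the ``only if'' direction as follows. Assume $\Sf{M}=(\pi^*M)^{\smvee\smvee}$ with $M$ reflexive. A choice of generators of the finitely generated module $M$ gives a surjection $\Ss{X}^{\oplus N}\twoheadrightarrow M$, whose pullback surjects onto $\pi^*M$; composing with $\pi^*M\to(\pi^*M)^{\smvee\smvee}=\Sf{M}$ shows that $\Sf{M}$ is generated by global sections away from the finite locus where $\pi^*M\to(\pi^*M)^{\smvee\smvee}$ fails to be onto (this locus is finite since $\pi^*M$ is locally free off a codimension-two set of the smooth surface), which gives (1). For (2) and the final assertion I would use that $M$ is $S_2$: any section of $\Sf{M}$ over $\Rs\setminus E\cong X\setminus\{x\}$ is a section of $M$ over $X\setminus\{x\}$, hence extends to $M=H^0(X,M)$, and the adjunction map $M\to\pi_*\pi^*M\to\pi_*\Sf{M}$ carries it to a global section of $\Sf{M}$ restricting to the original one. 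This yields the surjectivity, hence (2), and shows $\pi_*\Sf{M}$ is reflexive and agrees with $M$ on $X\setminus\{x\}$; two reflexive modules agreeing outside the codimension-two point coincide, so $\pi_*\Sf{M}=M$.

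For the ``if'' direction I would set $M:=\pi_*\Sf{M}$. By the dictionary above, condition (2) forces $M$ to be reflexive. The natural evaluation map $\pi^*M=\pi^*\pi_*\Sf{M}\to\Sf{M}$ is then, by condition (1), surjective outside a finite set. Dualizing twice gives $\varphi\colon(\pi^*M)^{\smvee\smvee}\to\Sf{M}^{\smvee\smvee}=\Sf{M}$, a morphism of \emph{locally free} sheaves of the same rank on the smooth surface $\Rs$ (reflexive sheaves on a smooth surface being locally free, and the ranks agreeing since $M$ and $\Sf{M}$ coincide on $X\setminus\{x\}$). Outside the finite set $\varphi$ is a surjection of vector bundles of equal rank, hence an isomorphism there; consequently $\det\varphi$ is a section of a line bundle whose zero locus is a divisor contained in a finite set, hence empty, so $\varphi$ is an isomorphism everywhere. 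Thus $\Sf{M}\cong(\pi^*M)^{\smvee\smvee}$ is full.

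The main obstacle is the first paragraph: correctly matching condition (2) with reflexivity of $\pi_*\Sf{M}$. One must verify $H^0_E(\Rs,\Sf{M})=0$, identify the cokernel of the restriction with $H^1_{\{x\}}(X,\pi_*\Sf{M})$ on the germ, and invoke the equivalence of $S_2$ with reflexivity over a normal surface germ. The other delicate point, which is where condition (1) is genuinely consumed, is the passage from ``isomorphism in codimension two'' to ``isomorphism'' via the determinant, for which local freeness on the smooth $\Rs$ is essential.
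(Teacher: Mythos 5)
The paper itself gives no proof of Proposition~\ref{fullcondiciones}; it is quoted directly from \cite[Proposition~1.2]{Ka}. Your reconstruction follows what is essentially the standard (Kahn/Esnault) route, and its backbone is sound: the dictionary identifying condition (2) --- via the local cohomology sequence, $H^0_E(\Rs,\Sf{M})=0$, and Steinness of $X$ --- with surjectivity of $H^0(\Rs,\Sf{M})\to H^0(\Rs\setminus E,\Sf{M})$, hence with $H^1_{\{x\}}(X,\pi_*\Sf{M})=0$, hence with reflexivity of $\pi_*\Sf{M}$, is exactly the mechanism the paper itself deploys later in the proof of Proposition~\ref{prop:gggs.fixed} (quoting \cite[Proposition~1.6]{stabhar}). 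Your ``if'' direction (double-dualize the counit $\pi^*\pi_*\Sf{M}\to\Sf{M}$, then use that a map of locally free sheaves of equal rank on the smooth surface $\Rs$ which is onto off a finite set has nowhere-vanishing determinant) is also correct and is the standard way to conclude.

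One justification, however, is wrong as stated, even though the claim it supports is true. In the ``only if'' direction you argue that the locus where $\pi^*M\to(\pi^*M)^{\smvee\smvee}$ fails to be onto is finite ``since $\pi^*M$ is locally free off a codimension-two set of the smooth surface.'' In general $\pi^*M$ is \emph{not} locally free in codimension one: pulling back a reflexive module along a resolution typically creates torsion along the whole exceptional divisor. For example, for the blow-up of $\CC^2$ at the origin and $M$ the maximal ideal, one computes in a chart that $\pi^*M\cong\Ss{\Rs}\oplus\mathcal{T}$ with $\mathcal{T}$ a torsion sheaf supported on the entire exceptional curve, so $\pi^*M$ fails to be locally free along a divisor. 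The repair is immediate: the map $\pi^*M\to(\pi^*M)^{\smvee\smvee}$ factors through the torsion-free quotient $\pi^*M/\mathrm{tors}$, and duals are insensitive to torsion, so its cokernel coincides with that of $(\pi^*M/\mathrm{tors})\to(\pi^*M/\mathrm{tors})^{\smvee\smvee}$; a torsion-free coherent sheaf on a smooth surface is locally free outside a finite set of points, and off that set the map to the double dual is an isomorphism. (The same correction should be kept in mind in the ``if'' direction, where you again use that the natural map $\pi^*M\to(\pi^*M)^{\smvee\smvee}$ is onto off a finite set when transferring surjectivity of the counit to the map $\varphi$.) With this one-sentence fix your proof is complete and correct.
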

In the particular case of a rational singularity the characterization of full sheaves is `easier':
\begin{proposition}[{\cite[Lemma~2.2]{Esnault}}]\label{Prop:fullcondiciones.rational}
Suppose that $(X,x)$ is a rational normal surface singularity. A locally free sheaf $\Sf{M}$ on $\Rs$ is full if and only if
\begin{enumerate}
\item the sheaf $\Sf{M}$ is generated by global sections,
\item $H^1_E(\Rs,\Sf{M}) =0$.
\end{enumerate}
\end{proposition}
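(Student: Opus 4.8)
The plan is to deduce this rational refinement from the general characterization of full sheaves in Proposition~\ref{fullcondiciones}, exploiting that rationality supplies the vanishing $H^1(\Rs,\Ss{\Rs})=0$ (i.e. $p_g=0$). The ``if'' direction is immediate: if $\Sf{M}$ is globally generated and $H^1_E(\Rs,\Sf{M})=0$, then a fortiori $\Sf{M}$ is generically generated and the map $H^1_E(\Rs,\Sf{M})\to H^1(\Rs,\Sf{M})$ is injective (its source is zero), so Proposition~\ref{fullcondiciones} shows $\Sf{M}$ is full. All the content lies in the converse, where I would upgrade the two weaker conclusions of Proposition~\ref{fullcondiciones} to the stated stronger ones.

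For global generation, let $\Sf{M}$ be full and set $M:=\pi_*\Sf{M}$, which is reflexive with $H^0(\Rs,\Sf{M})=M$. Module generators $m_1,\dots,m_N$ of $M$ over $\Ss{X,x}$ produce an evaluation map $\mathrm{ev}\colon \Ss{\Rs}^{\,N}\to \Sf{M}$; off $E$ it is surjective (the $m_i$ generate $\pi^*M=\Sf{M}$ there), and by Proposition~\ref{fullcondiciones} (generic generation) its cokernel $\Sf{C}$ is supported on a \emph{finite} subset of $E$, hence has finite length. Writing $\Sf{G}:=\mathrm{im}(\mathrm{ev})$ and $\Sf{K}:=\ker(\mathrm{ev})$, the vanishing $R^1\pi_*\Ss{\Rs}=0$ together with $R^2\pi_*\Sf{K}=0$ (the fibres of $\pi$ have dimension $\le 1$) forces $R^1\pi_*\Sf{G}=0$. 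Pushing forward $0\to\Sf{G}\to\Sf{M}\to\Sf{C}\to 0$ then makes $\pi_*\Sf{M}\to\pi_*\Sf{C}$ surjective, while the surjection $\Ss{X,x}^{\,N}\to M$ makes the same arrow zero; being simultaneously surjective and zero, it shows $\pi_*\Sf{C}=0$. A finite-length sheaf with no global sections vanishes, so $\Sf{C}=0$ and $\Sf{M}$ is globally generated. This is precisely where rationality enters: on a non-rational singularity $R^1\pi_*\Ss{\Rs}\neq 0$ and the cokernel need not die.

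Once global generation is in hand, I would read off $H^1_E(\Rs,\Sf{M})=0$ from the local cohomology sequence
\[
0\to H^0(\Rs,\Sf{M})\to H^0(\Rs\setminus E,\Sf{M})\to H^1_E(\Rs,\Sf{M})\to H^1(\Rs,\Sf{M})\to\cdots,
\]
where $H^0_E(\Rs,\Sf{M})=0$ since $\Sf{M}$ is locally free and $E$ has codimension one. Both outer $H^0$'s equal $M$: the left one because $\pi_*\Sf{M}=M$, the right one because $\Rs\setminus E\cong X\setminus\{x\}$ and $M$, being reflexive (hence $S_2$), satisfies $H^0(X\setminus\{x\},M)=M$. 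Thus the first arrow is an isomorphism and $H^1_E(\Rs,\Sf{M})\hookrightarrow H^1(\Rs,\Sf{M})$. Finally, the surjection $\Ss{\Rs}^{\,N}\twoheadrightarrow\Sf{M}$ coming from global generation yields, via $H^1(\Rs,\Ss{\Rs})=0$ and $R^2\pi_*=0$, the vanishing $H^1(\Rs,\Sf{M})=0$, whence $H^1_E(\Rs,\Sf{M})=0$.

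The main obstacle is the generation step. The decisive input is that Proposition~\ref{fullcondiciones} gives generation \emph{off a finite set} rather than merely off $E$: only then is the cokernel $\Sf{C}$ of finite length and so detectable by $\pi_*$ alone, whereas a nonzero sheaf supported on a whole component of $E$ can perfectly well have vanishing $\pi_*$. Everything else is homological bookkeeping with the functors $R^i\pi_*$ and the single, twice-used rationality vanishing $R^1\pi_*\Ss{\Rs}=0$.
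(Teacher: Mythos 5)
Your argument is correct, but there is nothing in the paper to compare it against: Proposition~\ref{Prop:fullcondiciones.rational} is imported as a citation of Esnault's Lemma~2.2 and the paper supplies no proof of it. What you have done is reconstruct a proof by deducing the rational-case criterion from Kahn's general characterization (Proposition~\ref{fullcondiciones}), which is the natural route given the paper's setup, and it goes through. The ``if'' direction is formal, and in the ``only if'' direction the two rationality inputs sit exactly where you placed them: $R^1\pi_*\Ss{\Rs}=0$ first kills $R^1\pi_*\Sf{G}$, so that $\pi_*$ applied to $0\to\Sf{G}\to\Sf{M}\to\Sf{C}\to 0$ detects the finite-length cokernel and forces $\Sf{C}=0$; and it then kills $H^1(\Rs,\Sf{M})$ once global generation is established, so the injectivity $H^1_E(\Rs,\Sf{M})\hookrightarrow H^1(\Rs,\Sf{M})$ upgrades to the stated vanishing. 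One step deserves to be made explicit: generic generation in Proposition~\ref{fullcondiciones} refers to generation by \emph{all} of $H^0(\Rs,\Sf{M})$, whereas your $\Sf{C}$ is the cokernel of the evaluation on the finitely many sections $\pi^*m_1,\dots,\pi^*m_N$. These two cokernels coincide because every global section of $\Sf{M}$ lies in $M=\pi_*\Sf{M}$ and hence is an $\Ss{X,x}$-combination of the $m_i$, and the pulled-back coefficient functions are holomorphic on all of $\Rs$, so the two subsheaves of $\Sf{M}$ generated stalkwise agree. With that sentence inserted, the proof is complete; it also has the merit of making the paper self-contained and of isolating precisely where $p_g=0$ is used (twice, and nowhere else).
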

\begin{definition}
Let $\Sf{F}$ be a sheaf on $\Rs$. The \emph{base points} of $\Sf{F}$ are the points $p \in \Rs$ such that $s(p)=0$ for all $s\in H^0(\Rs,\Sf{F})$. A component $E_v$ is called a \emph{fixed component} of $\Sf{F}$ if any section $s\in H^0(\Rs,\Sf{F})$ vanishes along $E_v$.
\end{definition}
In the particular case of  rank one sheaves, in Proposition \ref{fullcondiciones} the condition
 {\it (1)}, namely that
$\Sf{M}$ is generically generated by global sections, can equivalently be  replaced
by the condition that $\Sf{M}$ has no fixed components.
 This is proved in the following proposition.
\begin{proposition}
\label{prop:gggs.fixed}
Let $(X,x)$ be the germ of a normal surface singularity. Let $\pi \colon \Rs \to X$ be any resolution. Let $\Sf{L}$ be a line bundle such that the natural map $H^1_E(\Rs,\Sf{L}) \to H^1(\Rs,\Sf{L})$ is injective. Then, the following conditions are equivalent:
\begin{enumerate}
    \item the sheaf $\Sf{L}$ is generically generated by global sections.
    \item the sheaf $\Sf{L}$ has no fixed components.
\end{enumerate}
\end{proposition}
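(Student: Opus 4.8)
The implication $(1)\Rightarrow(2)$ is the easy direction and holds for any sheaf, so the content lies in $(2)\Rightarrow(1)$. The plan is to prove the contrapositive of the genuinely non-trivial step: I will show that if $\Sf{L}$ has no fixed components, then the base locus of $\Sf{L}$ is finite, which is exactly the statement that $\Sf{L}$ is generically generated by global sections. Since we are on a surface and $\Sf{L}$ has rank one, the base locus $B$ of the linear system $|H^0(\Rs,\Sf{L})|$ is a closed analytic subset of $\Rs$. Off the exceptional divisor $E$ the sheaf is the pullback of a reflexive module on the smooth locus of $X$, so no base points occur there; hence $B\subseteq E$, and $B$ is either finite or contains some irreducible curve. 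The only curves available are components $E_v$ of $E$, so if $B$ is not finite it must contain a full component $E_v$ along which every global section vanishes --- i.e.\ $E_v$ is a fixed component. This is precisely what we have excluded, so $B$ is finite.

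The one place where the injectivity hypothesis on $H^1_E(\Rs,\Sf{L})\to H^1(\Rs,\Sf{L})$ must enter is in ruling out the subtlety that a component could lie in the base locus \emph{scheme-theoretically} (every section vanishing on $E_v$ to some order) without $E_v$ literally being contained in the zero set of each section. Concretely, I expect to argue as follows. Suppose $E_v$ lies in the base locus but is not a fixed component; then there is a global section $s$ not vanishing identically on $E_v$, yet $s$ vanishes at every point of $E_v$, which already forces $s|_{E_v}\equiv 0$ on the reduced curve --- a contradiction unless $s$ vanishes on all of $E_v$. So at the level of the reduced structure, ``$E_v\subseteq B$'' and ``$E_v$ is a fixed component'' coincide, and the reduction above is clean. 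The role of condition (2) of Proposition~\ref{fullcondiciones} is therefore to guarantee that we may pass freely between the sheaf $\Sf{L}$ and its restrictions to subcycles supported on $E$: the injectivity of $H^1_E\to H^1$ ensures (via the long exact sequence of local cohomology) that the restriction maps $H^0(\Rs,\Sf{L})\to H^0(Z,\Sf{L}|_Z)$ capture enough sections to detect fixed components on every sufficiently large cycle $Z$ supported on $E$.

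The key steps, in order, are: first reduce the equivalence to showing $B\subseteq E$ and then to showing $B$ meets no component $E_v$ in a curve; second, translate ``$E_v\subseteq B$'' into ``$E_v$ is a fixed component'' using that a section vanishing at every point of the reduced curve $E_v$ vanishes identically on $E_v$; third, invoke the injectivity hypothesis to guarantee that global sections separate points away from fixed components, so that a non-fixed component genuinely carries sections that are nonzero at its generic point. The main obstacle I anticipate is the bookkeeping in the third step: one must verify that the injectivity of $H^1_E(\Rs,\Sf{L})\to H^1(\Rs,\Sf{L})$ really does produce, on each component $E_v$ that is not fixed, a global section nonvanishing at the generic point of $E_v$, rather than merely a local section. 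I would handle this by choosing a large enough cycle $Z$ with support $E$, writing the exact sequence
\begin{equation*}
    0 \to H^0(\Rs,\Sf{L}) \to H^0(\Rs\setminus E,\Sf{L}) \to H^1_E(\Rs,\Sf{L}) \to H^1(\Rs,\Sf{L}),
\end{equation*}
and using the injectivity of the last map to control which local sections extend, thereby transferring the generic nonvanishing on $\Rs\setminus E$ to a global generic nonvanishing along each non-fixed $E_v$.
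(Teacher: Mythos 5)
Your overall skeleton is the right one, and it is also the paper's: show that the base locus $B$ of $\Sf{L}$ is contained in $E$, and then use the absence of fixed components to conclude that $B$ contains no irreducible component of $E$, hence is finite. The gap is that the load-bearing step, $B\subseteq E$, is asserted rather than proved. You write that off $E$ the sheaf ``is the pullback of a reflexive module on the smooth locus of $X$, so no base points occur there''; but being, over $\Rs\setminus E$, the pullback of a reflexive sheaf on $U=X_{\mathrm{reg}}$ says nothing by itself about the sections of $\Sf{L}$ \emph{over $\Rs$} generating the stalks at points off $E$ --- that is exactly what has to be shown. Nor do sections over $\Rs\setminus E\cong U$ generate there ``for free'': $U$ is not Stein, so this requires an argument as well. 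Your two later paragraphs, which are meant to explain how the injectivity hypothesis repairs this, do not: the ``scheme-theoretic base component'' worry is vacuous (a section vanishing at every point of the reduced curve $E_v$ vanishes along $E_v$; this is just the definition of a fixed component), and the plan in your last paragraph is circular --- the local-cohomology sequence plus injectivity of $H^1_E(\Rs,\Sf{L})\to H^1(\Rs,\Sf{L})$ only gives that $H^0(\Rs,\Sf{L})\to H^0(\Rs\setminus E,\Sf{L})$ is surjective, and this transfers generation from $\Rs\setminus E$ to $\Rs$ only if one already knows that sections over $\Rs\setminus E$ generate at points off $E$, which is the same unestablished claim one level down.

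There are two ways to close the gap. The paper's route uses the injectivity hypothesis as follows: injectivity gives $H^0(\Rs,\Sf{L})\cong H^0(U,\Sf{L})$, whence $L:=\pi_*\Sf{L}$ satisfies $L_x\cong(\iota_*\iota^*L)_x$ and is therefore reflexive by~\cite[Proposition~1.6]{stabhar}; then $\tilde{\Sf{L}}:=(\pi^*L)^{\smvee\smvee}$ is a full sheaf, hence generically generated by global sections by Proposition~\ref{fullcondiciones}, and since $\Sf{L}$ and $\tilde{\Sf{L}}$ are isomorphic over $\Rs\setminus E$ and have the same global sections, the non-generated locus of $\Sf{L}$ lies in $E$ up to a finite set. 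Alternatively, a direct argument: $H^0(\Rs,\Sf{L})=H^0(X,\pi_*\Sf{L})$, the sheaf $\pi_*\Sf{L}$ is coherent by Grauert's direct image theorem, and a Stein representative of $(X,x)$ satisfies Cartan's Theorem A, so these global sections generate the stalk $(\pi_*\Sf{L})_y$ at every $y\in U$; since $\pi$ is biholomorphic off $E$, they generate $\Sf{L}_p$ for every $p\notin E$, i.e., $B\subseteq E$. (Incidentally, this second argument makes no use of the injectivity hypothesis, which in the paper's proof serves only to route the statement through reflexivity of $\pi_*\Sf{L}$ and the full-sheaf machinery.) Either of these would turn your outline into a complete proof; as written, the proposal assumes its key step.
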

\begin{proof}
The implication {\it (1) $\Rightarrow$ (2)}  is immediate. The meaning of  {\it (2) $\Rightarrow$ (1)}
is the following: if $\Sf{L}$ has no fixed components (along $E$) then it cannot have any one-dimensional non-compact base component either. The proof runs as follows.
 Suppose that $\Sf{L}$ has no fixed components. Denote by
\begin{equation*}
    L:= \pi_* \Sf{L} \quad \text{and} \quad \tilde{\Sf{L}} := (\pi^* L)^{\smvee \smvee}.
\end{equation*}
Set $U:= X_{\mathrm{reg}}$, the regular part of $X$. Denote by $\iota \colon U \to X$ the inclusion.
 By hypothesis, the natural map  $H^1_E(\Rs,\Sf{L}) \to H^1(\Rs,\Sf{L})$ is injective,
 hence $H^0(\Rs,\Sf{L}) \to H^0(U,\Sf{L})$ is an isomorphism. This gives  isomorphisms
%
\begin{equation*}
    L_x \cong H^0(\Rs, \Sf{L}) \cong H^0(U, \Sf{L}) \cong (\iota_* \Sf{L}|_U)_x \cong (\iota_*\iota^*L)_x
\end{equation*}
Therefore, $L$ is a reflexive module (see~\cite[Proposition $1.6$]{stabhar}), and $\tilde{\Sf{L}}$ is a full sheaf. Since $\tilde{\Sf{L}}$ is generated by global sections on $\Rs \setminus E$ and the shaves $\Sf{L}$ and $\tilde{\Sf{L}}$ are isomorphic over $\Rs \setminus E$, we get
\begin{equation*}
    \{ p \in \Rs \, | \, \text{$\Sf{L}$ is not generated by global sections at $p$} \} \subset E.
\end{equation*}
Hence, since $\Sf{L}$ has no fixed components along $E$, it can have at most
 finitely many base points.
\end{proof}

In general, for a normal surface singularity the generation by global sections of a full sheaf depends on the resolution. Nevertheless, if for some resolution a full sheaf is generated by global sections then it satisfies nice properties under pull-back and push-forward. The following proposition will be used later.
\begin{proposition}
\label{Prop:Full.generation.global.sec}
 Let $(X,x)$ be the germ of a normal surface singularity. Let $\pi \colon \Rs \to X$ be any resolution. Let $\sigma \colon \Rs_0 \to \Rs$ be the blow-up in some point
  $p\in E\subset \Rs$. Let $\Sf{M}$ be a full $\Ss{\Rs}$-sheaf. If $\Sf{M}$ is generated by global sections, then $\sigma^* \Sf{M}$ is a full $\Ss{\Rs_0}$-sheaf generated by global sections. Moreover, we have the isomorphism $\sigma_* \sigma^* \Sf{M} \cong \Sf{M}$.
\end{proposition}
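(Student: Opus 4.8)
The plan is to verify that $\sigma^*\Sf{M}$ satisfies the two conditions of the fullness criterion, Proposition~\ref{fullcondiciones}, relative to the resolution $\pi_0:=\pi\comp\sigma\colon \Rs_0\to X$, and to read off the pushforward identity from the projection formula. Since $\Sf{M}$ is full it is in particular locally free, so $\sigma^*\Sf{M}$ is locally free and the criterion applies. Write $M:=\pi_*\Sf{M}$, a reflexive $\Ss{X}$-module with $\Sf{M}\cong(\pi^*M)^{\smvee\smvee}$, and note that $\pi_0$ is again a resolution of $(X,x)$ with exceptional divisor $E_0:=\pi_0^{-1}(x)=\sigma^{-1}(E)$; moreover $\sigma$ restricts to an isomorphism $\Rs_0\setminus E_0\xrightarrow{\sim}\Rs\setminus E$ since $p\in E$. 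Once fullness is established, its associated reflexive module is $\pi_{0*}\sigma^*\Sf{M}=\pi_*\sigma_*\sigma^*\Sf{M}=M$.

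First I would dispose of the two easy points. For the pushforward identity, since $\Sf{M}$ is locally free the projection formula gives $\sigma_*\sigma^*\Sf{M}\cong \Sf{M}\otimes\sigma_*\Ss{\Rs_0}\cong\Sf{M}$, where $\sigma_*\Ss{\Rs_0}\cong\Ss{\Rs}$ holds because $\sigma$ is a blow-up of a smooth point on a normal (indeed smooth) surface. For generation by global sections, I would start from the evaluation surjection $\Ss{\Rs}\otimes_{\CC}H^0(\Rs,\Sf{M})\to\Sf{M}\to 0$ afforded by the hypothesis, apply the right-exact functor $\sigma^*$ (using $\sigma^*\Ss{\Rs}=\Ss{\Rs_0}$) to obtain a surjection $\Ss{\Rs_0}\otimes_{\CC}H^0(\Rs,\Sf{M})\to\sigma^*\Sf{M}\to 0$, and observe that this map factors through pulled-back global sections $\sigma^*s\in H^0(\Rs_0,\sigma^*\Sf{M})$. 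Hence the evaluation map of $\sigma^*\Sf{M}$ is already surjective, so $\sigma^*\Sf{M}$ is generated by global sections, and in particular generically so; this gives condition~(1).

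The substantive step is condition~(2), the injectivity of $H^1_{E_0}(\Rs_0,\sigma^*\Sf{M})\to H^1(\Rs_0,\sigma^*\Sf{M})$. I would reformulate it through the local-cohomology long exact sequence
\begin{equation*}
H^0(\Rs_0,\sigma^*\Sf{M})\xrightarrow{\ \mathrm{res}\ } H^0(\Rs_0\setminus E_0,\sigma^*\Sf{M})\xrightarrow{\ \delta\ } H^1_{E_0}(\Rs_0,\sigma^*\Sf{M})\to H^1(\Rs_0,\sigma^*\Sf{M}),
\end{equation*}
where injectivity of the last arrow is equivalent to $\delta=0$, i.e. to surjectivity of the restriction map $\mathrm{res}$. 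Now I identify both terms with their counterparts on $\Rs$: using $\sigma_*\sigma^*\Sf{M}\cong\Sf{M}$ one has $H^0(\Rs_0,\sigma^*\Sf{M})\cong H^0(\Rs,\Sf{M})$, while the isomorphism $\Rs_0\setminus E_0\cong\Rs\setminus E$ gives $H^0(\Rs_0\setminus E_0,\sigma^*\Sf{M})\cong H^0(\Rs\setminus E,\Sf{M})$; under these identifications $\mathrm{res}$ corresponds to the restriction $H^0(\Rs,\Sf{M})\to H^0(\Rs\setminus E,\Sf{M})$. The latter is surjective precisely because $\Sf{M}$ is full: its condition~(2) says $H^1_E(\Rs,\Sf{M})\to H^1(\Rs,\Sf{M})$ is injective, which by the same local-cohomology sequence on $\Rs$ means exactly that this restriction is onto. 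Transporting surjectivity back yields condition~(2) for $\sigma^*\Sf{M}$, and Proposition~\ref{fullcondiciones} then shows $\sigma^*\Sf{M}$ is full.

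The main obstacle I anticipate is purely bookkeeping in this last paragraph: one must check that the two vertical identifications are compatible with the restriction maps, i.e. that the square relating $\mathrm{res}$ on $\Rs_0$ to restriction on $\Rs$ commutes. This follows from naturality, since the isomorphism $\sigma_*\sigma^*\Sf{M}\cong\Sf{M}$ is, over $\Rs\setminus\{p\}$, simply the identification induced by the isomorphism $\sigma$, so a global section of $\sigma^*\Sf{M}$ and its image in $H^0(\Rs,\Sf{M})$ genuinely agree away from $E$. Everything else — the projection formula, $\sigma_*\Ss{\Rs_0}\cong\Ss{\Rs}$, right-exactness of $\sigma^*$, and the local-cohomology sequence — is standard, so the proof reduces to assembling these identifications correctly.
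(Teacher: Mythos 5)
Your proof is correct, and it is worth noting that the paper itself gives no argument at all here: its ``proof'' is a single sentence deferring to an adaptation of \cite[Proposition~4.7]{BoRo}. Your write-up therefore supplies exactly the content the paper leaves implicit, and it does so along the natural route: verifying the two conditions of Kahn's criterion (Proposition~\ref{fullcondiciones}) for $\sigma^*\Sf{M}$ relative to the resolution $\pi\comp\sigma$. The three ingredients are all sound: (i) $\sigma_*\sigma^*\Sf{M}\cong\Sf{M}$ from the projection formula plus $\sigma_*\Ss{\Rs_0}\cong\Ss{\Rs}$ (valid since $\Sf{M}$, being reflexive on a smooth surface, is locally free); (ii) global generation of $\sigma^*\Sf{M}$ by pulling back the evaluation surjection; and (iii) the reformulation, via the local-cohomology exact sequence, of the injectivity condition as surjectivity of the restriction $H^0(\Rs_0,\sigma^*\Sf{M})\to H^0(\Rs_0\setminus E_0,\sigma^*\Sf{M})$, which you correctly transport to the corresponding surjectivity statement on $\Rs$ encoded in the fullness of $\Sf{M}$. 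Two points you handle that are easy to get wrong and deserve emphasis: the identification squares commute because the unit $\Sf{M}\to\sigma_*\sigma^*\Sf{M}$ restricts to the tautological identification over $\Rs\setminus\{p\}$; and the hypothesis of generation by global sections \emph{everywhere} (not just generically) is genuinely needed in step (ii), since if $\Sf{M}$ failed to be generated at $p$ itself, the pulled-back sections could fail to generate along the whole new exceptional curve, destroying even generic generation of $\sigma^*\Sf{M}$. Your argument uses the hypothesis precisely where it must.
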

\begin{proof}
The proof is an adaptation of the proof of~\cite[Proposition~4.7]{BoRo}.
\end{proof}

\section{Rank one full sheaves of rational singularities}
\label{sec:rational.sing}
In this section we classify all the reflexive modules of rank one over any rational singularity.

Recall that if $(X,x)$ is rational then the link $\Sigma$ is a rational homology sphere,
in particular $H_1(\Sigma, \ZZ)={\rm Tors} (H_1(\Sigma, \ZZ))=L'/L=H$. Furthermore, since
${\rm Pic}^0(\Rs)=\CC^{p_g}=0$, we also have the isomorphism $c_1:{\rm Pic}(\Rs)\to L'$, that is,
all the line bundles are characterized topologically. In this section,
 for any $\ell'\in L'$ we denote
by $\Ss{\Rs}(\ell')\in {\rm Pic}(\Rs)$ that bundle which satisfies $c_1(\Ss{\Rs}(\ell'))=\ell'$.
(Later we will define the bundles $\Ss{\Rs}(\ell')$ in a more general  non-rational context as well.)

\begin{theorem}
\label{th:class.rational.case}
 Let $(X,x)$ be a rational normal surface singularity. Let $\pi\colon \Rs \to X$ be any resolution. Then,
 \begin{enumerate}
     \item for any $h\in H$ the line bundle $\Ss{\Rs}(-s_h)$ is full.
     \item If a line bundle $\mathcal{L}\in \mathrm{Pic}(\Rs)$ is full, then $c_1(\mathcal{L})=-s_h$ for some $h\in H$. %
          Hence $\mathcal{L}=\Ss{\Rs}(-s_h)$.
 \end{enumerate}
 Thus, any full sheaf of rank one is of the form $\Ss{\Rs}(-s_h)$ for some $h\in H$.
\end{theorem}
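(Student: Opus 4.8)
The plan is to establish the two assertions by rather different means: statement~(1) by checking, for each $h$, the fullness criterion of Esnault (Proposition~\ref{Prop:fullcondiciones.rational}) for the candidate bundle $\Sf{L}:=\Ss{\Rs}(-s_h)$, and statement~(2) by a counting argument identifying the set of all rank one full sheaves with $\mathrm{Cl}(X,x)\cong H$.

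For~(1) I would verify the two conditions of Proposition~\ref{Prop:fullcondiciones.rational}. Since $s_h\in\mathcal{S}'$ is anti-nef, the degree $(-s_h,E_v)\ge 0$ of $\Sf{L}$ on every component is non-negative, and I would deduce that $\Sf{L}$ is generated by global sections from the generation of anti-nef line bundles on rational singularities (equivalently, by running the computation sequence of Subsection~\ref{ss:GenLau} and using Lipman vanishing to kill the relevant $H^1$'s). For the vanishing $H^1_E(\Rs,\Sf{L})=0$ I would argue as follows. Lipman's vanishing (Theorem~\ref{Th:Lipman.Vanish}) gives $H^1(\Rs,\Sf{L})=0$, because $-c_1(\Sf{L})=s_h\in\mathcal{S}'$; hence the local cohomology sequence identifies $H^1_E(\Rs,\Sf{L})$ with the cokernel of the restriction $H^0(\Rs,\Sf{L})\to H^0(\Rs\setminus E,\Sf{L})=\varinjlim_n H^0(\Rs,\Ss{\Rs}(-s_h+nE))$. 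Here the minimality of $s_h$ is the decisive input: for every $n\ge 0$ the generalized Laufer algorithm gives $s(s_h-nE)=s_h$, since $s(s_h-nE)$ lies in $\mathcal{S}_h'$ and dominates $s_h-nE$, so it is $\ge s_h$, while $s_h$ is itself an admissible value and forces the reverse inequality. Lemma~\ref{lemma:computation.seq}(a) then yields $H^0(\Rs,\Ss{\Rs}(-s_h+nE))\cong H^0(\Rs,\Sf{L})$ for all $n$; passing to the limit shows the cokernel vanishes, i.e. $H^1_E(\Rs,\Sf{L})=0$. This proves $\Sf{L}$ is full.

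For~(2) I would argue by cardinality. The assignment $M\mapsto(\pi^*M)^{\smvee\smvee}$ is a bijection from isomorphism classes of rank one reflexive $\Ss{X}$-modules onto rank one full sheaves, with inverse $\Sf{M}\mapsto\pi_*\Sf{M}$ (Proposition~\ref{fullcondiciones}). The source is $\mathrm{Cl}(X,x)$, and for a rational singularity the exact sequence~\eqref{eq:def.c1.overline} with $p_g=0$ and $H^1(\Sigma,\ZZ)=0$ gives $\mathrm{Cl}(X,x)\cong H$; thus there are exactly $|H|$ rank one full line bundles. On the other hand part~(1) produces the $|H|$ bundles $\Ss{\Rs}(-s_h)$, which are pairwise distinct because the $s_h$ are distinct in $L'$ and $c_1$ is injective in the rational case. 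Consequently $\{\Ss{\Rs}(-s_h)\}_{h\in H}$ already exhausts the rank one full sheaves, so any full $\mathcal{L}$ equals some $\Ss{\Rs}(-s_h)$ and $c_1(\mathcal{L})=-s_h$, as claimed. Independently, one sees directly that a full $\mathcal{L}$ is generically generated, hence has non-negative degree on each $E_v$, so that $-c_1(\mathcal{L})\in\mathcal{S}'$; this pins down the relevant class $h$.

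The main obstacle I expect is the verification of fullness in part~(1), and specifically the global generation of $\Ss{\Rs}(-s_h)$, which must hold uniformly for an arbitrary resolution and not merely the minimal one; the vanishing $H^1_E=0$, by contrast, falls out cleanly from the minimality of $s_h$ through Laufer's algorithm as above. Some care is also needed to invoke the correct (general versus rational) form of the fullness criterion, since a priori one only controls generic generation, which by Proposition~\ref{prop:gggs.fixed} amounts to the absence of fixed components.
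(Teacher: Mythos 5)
Your proposal is correct in substance, but both halves follow genuinely different routes from the paper's proof. For part (1) the paper never looks at sections over $\Rs\setminus E$: it converts $H^1_E(\Rs,\Ss{\Rs}(-s_h))=0$ by duality into $H^1(\Rs,\Ss{\Rs}(K_{\Rs}+s_h))=0$ and proves that vanishing by combining the generalized Grauert--Riemenschneider Theorem~\ref{Th:GGR} (applied to $K_{\Rs}+r_h$), the two exact sequences involving $\Delta:=s_h-r_h$, Serre duality on the cycle $\Delta$, Lemma~\ref{lemma:computation.seq}(a), and Lipman's vanishing. Your argument uses the same two key inputs (Lipman's vanishing and Lemma~\ref{lemma:computation.seq}(a), i.e.\ the minimality of $s_h$) but packages them differently; your computation $s(s_h-nE)=s_h$ is correct, since any $s\in\mathcal{S}'$ with $s\geq s_h-nE$ and $s-(s_h-nE)\in L$ lies in $\mathcal{S}'_h$ and hence dominates $s_h$, and this lets you bypass duality and GGR altogether. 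For part (2), the paper gives a direct argument independent of part (1): fullness forces $h^0(\Ss{\delta}(-s_h))=h^1(\Ss{\delta}(-s_h))=0$ for $\delta=-c_1(\mathcal{L})-s_h$, hence $\chi(\delta)-(\delta,s_h)=0$ by Riemann--Roch, contradicting Artin's criterion $\chi(\delta)>0$ when $\delta>0$. Your counting argument through $\mathrm{Cl}(X,x)\cong H$ is also valid (the identity $\pi_*\Sf{M}=M$ from Proposition~\ref{fullcondiciones} makes $M\mapsto(\pi^*M)^{\smvee\smvee}$ a bijection onto rank one full sheaves, and $H$ is finite), and it is shorter; its costs are that your (2) depends on (1), and that the counting has no analogue in the minimally elliptic case (there $\mathrm{Pic}^0(\Rs)\simeq\CC$ and the full-sheaf loci are positive-dimensional), whereas the paper's argument for (2) is precisely the template reused in Theorem~\ref{th:char.full.minimally.ellip}.

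One step of yours does need justification: the identification $H^0(\Rs\setminus E,\Sf{L})=\varinjlim_n H^0(\Rs,\Sf{L}(nE))$. In the algebraic category this is formal, but the paper works with analytic germs, where this asserts that sections of $\Sf{L}$ over $\Rs\setminus E$ have at worst poles (not essential singularities) along $E$. It is true, but not for free: for instance, one can argue that $N:=j_*\bigl(\Sf{L}|_{X\setminus\{x\}}\bigr)$ is coherent and reflexive (Serre--Siu extension across a point of a normal surface), so that $H^0(\Rs\setminus E,\Sf{L})=H^0(\Rs,\Sf{N})$ for the associated full sheaf $\Sf{N}$ with $\pi_*\Sf{N}=N$; then $\Sf{N}\otimes\Sf{L}^{\smvee}$ is analytically trivial on $\Rs\setminus E$, hence topologically trivial there, so its Chern class lies in $L$, and since $p_g=0$ makes $c_1$ injective one gets $\Sf{N}=\Sf{L}(D)$ for some $D\in L$, whence $\Sf{N}\subset\Sf{L}(nE)$ for $n\gg 0$. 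With this (or an equivalent citation) supplied, your proof is complete.
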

\begin{proof}
\textit{(1)} Let $h\in H$. In order to prove that the line bundle $\Ss{\Rs}(-s_h)$ is full,  by Proposition~\ref{Prop:fullcondiciones.rational} (and Serre duality) we have to verify  that
\begin{enumerate}[label=(\roman*)]
    \item the sheaf $\Ss{\Rs}(-s_h)$ is generated by global sections,
    \item $H^1(\Ss{\Rs}(s_h +K_{\Rs}))=0$.
\end{enumerate}
First, we verify  (i).  Since $c_1(\Ss{\Rs}(-s_h)) \in -\mathcal{S}'$ and $(X,x)$ is rational, then by~\cite[Theorem~12.1]{Lip} the sheaf $\Ss{\Rs}(-s_h)$ is generated by global sections.

\noindent
Next, we prove  (ii).
Since $\lfloor r_h \rfloor=0$, we get $r_h \in \{r_h\}-\mathcal{S}_{\QQ}$. Thus, by
Theorem~\ref{Th:GGR}
\begin{equation}
    \label{eq:Vanish.rh.0}
    h^1(\Ss{\Rs}(K_{\Rs}+r_h))=0.
\end{equation}
Set $\Delta:= s_h-r_h$. If $\Delta=0$, then we are done. Suppose that $\Delta >0$.
Then consider  the following exact sequence
\begin{equation}
\label{eq:Exact.Delta.1}
    0 \to \Ss{\Rs}(K_{\Rs}+r_h) \to \Ss{\Rs}(K_{\Rs}+s_h) \to \Ss{\Delta}(K_{\Rs}+s_h) \to 0.
\end{equation}
By~\eqref{eq:Vanish.rh.0} and the induced long  cohomological
exact sequence
we get
\begin{equation*}
     h^1(\Ss{\Rs}(K_{\Rs}+s_h)) = h^1(\Ss{\Delta}(K_{\Rs}+s_h)).
\end{equation*}
By Serre duality we have
\begin{equation*}
 h^1(\Ss{\Delta}(K_{\Rs}+s_h))=h^0(\Ss{\Delta}(-s_h+\Delta))=h^0(\Ss{\Delta}(-r_h)).
\end{equation*}
Therefore, we have to prove that $h^0(\Ss{\Delta}(-r_h))=0$. Now, the exact sequence
\begin{equation*}
    0 \to \Ss{\Rs}(-s_h) \to \Ss{\Rs}(-r_h) \to \Ss{\Delta}(-r_h) \to 0,
\end{equation*}
induces the cohomological  long exact sequence
\begin{equation}\label{eq:long.exact.2}
	\begin{gathered}
	\xymatrix{
	0 \ar[r]& H^0(\Ss{\Rs}(-s_h)) \ar[r]& H^0(\Ss{\Rs}(-r_h)) \ar[r]& H^0(\Ss{\Delta}(-r_h)) \ar@{->} `r/8pt[d] `/10pt[l] `^dl[lll]|{} `^d \\
	H^1(\Ss{\Rs}(-s_h)) \ar[r]& H^1(\Ss{\Rs}(-r_h)) \ar[r]& H^1(\Ss{\Delta}(-r_h)) \ar[r]& 0 }
	\end{gathered}
\end{equation}
Since $s(r_h)=s_h$, by Lemma~\ref{lemma:computation.seq} (taking $\mathcal{L}=\Ss{\Rs}(-r_h)$)  we get $H^0(\Ss{\Rs}(-s_h)) \stackrel{\cong}{\longrightarrow} H^0(\Ss{\Rs}(-r_h))$.
By Lipman's vanishing theorem~\ref{Th:Lipman.Vanish}, we get $h^1(\Ss{\Rs}(-s_h))=0$. Therefore, by~\eqref{eq:long.exact.2} we get
\begin{equation}
    \label{eq:Theo1.Equality.3}
    h^0(\Ss{\Delta}(-r_h))=0.
\end{equation}
This proves the $H^1(\Ss{\Rs}(s_h + K_{\Rs}))=0$. Therefore, $\Ss{\Rs}(-s_h)$ is a full sheaf.

\textit{(2)} Let $\mathcal{L}\in \mathrm{Pic}(\Rs)$ be a full sheaf. Now we prove that its first Chern class $c_1(\mathcal{L})$ is equal to $-s_h$ for some $h\in H$. Since $X$ is a rational singularity,  $\mathcal{L}$ is generated by global sections, hence $(c_1(\mathcal{L}),E_v)\geq 0$ for any $v\in V$. Thus, $\ell':=-c_1(\mathcal{L})\in \mathcal{S}'$.

Set $h:=[\ell'] \in H=L'/L$. By the properties of the  Lipman cone reviewed  in the preliminaries, there exists a unique minimal element $s_h\in {\mathcal S}_h'$ such that $[s_h]=h$. Therefore, $s_h \leq \ell'.$ Denote by $\delta := \ell'-s_h$.  If $\delta=0$, we are done. Indeed, $s_h=\ell'$ implies that $\mathcal{L}=\Ss{\Rs}(-s_h)$ (recall that $X$ is a rational singularity, thus $\mathrm{Pic}^0(\Rs)=0$). Suppose that $\delta>0$. Consider the exact sequence
\begin{equation}
\label{eq:ssq.1}
    0 \to \Ss{\Rs}(K_{\Rs}+s_h) \to \Ss{\Rs}(K_{\Rs}+\ell') \to \Ss{\delta}(K_{\Rs}+\ell') \to 0.
\end{equation}
Since $\mathcal{L}$ is full,  $H^1(\Ss{\Rs}(K_{\Rs}+\ell'))=0$. Thus, by the induced cohomological
long exact sequence 
\begin{equation*}
    H^1(\Ss{\delta}(K_{\Rs}+\ell'))=0.
\end{equation*}
By Serre duality
\begin{equation*}
    H^1(\Ss{\delta}(K_{\Rs}+\ell'))=H^0(\Ss{\delta}(-\ell'+\delta))= H^0(\Ss{\delta}(-s_h)),
\end{equation*}
hence
\begin{equation}
\label{eq:h0.sh}
    h^0(\Ss{\delta}(-s_h))=0.
\end{equation}
On the other hand, from the exact sequence
\begin{equation*}
    0 \to \Ss{\Rs}(-\ell') \to \Ss{\Rs}(-s_h) \to \Ss{\delta}(-s_h) \to 0,
\end{equation*}
and $h^1(\Ss{\Rs}(-s_h))=0$ by Lipman's vanishing theorem, we obtain
\begin{equation}
\label{eq:h1.sh}
    h^1(\Ss{\delta}(-s_h))=0.
\end{equation}
In particular, ~\eqref{eq:h0.sh} and~\eqref{eq:h1.sh} provide
\begin{equation*}
\chi(\Ss{\delta}(-s_h))=0.
\end{equation*}
This by Riemann-Roch theorem reads as
$0=\chi(\Ss{\delta}(-s_h))=\chi(\delta) - (\delta,s_h)$.
Since $(X,x)$ is   rational and $\delta>0$,  by Artin's criterion
$ \chi(\delta)> 0$.
Furthermore, since $s_h \in \mathcal{S}'$, we also obtain
 $- (\delta,s_h)\geq 0$.
Therefore, 
  $  0=\chi(\delta) - (\delta,s_h)>0$.
This is a contradiction. Therefore, $\delta$ must be equal to zero. 
\end{proof}
\begin{remark}
By definition, there is a one to one correspondence between full sheaves over $\Rs$ and reflexive modules over $X$. Since the set of  reflexive modules over $X$ is independent of the resolution,
it is natural to ask the independence of the resolution of
the classification of Theorem~\ref{th:class.rational.case}. Let us verify this fact in the next discussion.
\end{remark}
To verify the naturalness of the sheaves of the theorem  we need some notation. Let $(X,x)$ be a rational normal surface singularity. Let $\pi\colon \Rs \to X$ be any resolution. Let $\sigma \colon \Rs_0 \to \Rs$ the blow-up  at some point $p\in E\subset \Rs$.
Denote by
\begin{align*}
    L(\Rs)&:= H_2(\Rs,\ZZ), \quad &L'(\Rs)&:=H_2(\Rs, \partial \Rs, \ZZ)  \quad \text{and} \quad &H(\Rs)&:= L'(\Rs)/L(\Rs), \\
    L(\Rs_0)&:= H_2(\Rs_0,\ZZ), \quad &L'(\Rs_0)&:=H_2(\Rs_0, \partial \Rs_0, \ZZ) \quad \text{and} \quad &H(\Rs_0)&:= L'(\Rs_0)/L(\Rs_0).
\end{align*}
The map $\sigma$ induces, via the cohomological pullback and the duality $H_2(\Rs, \partial\Rs,\ZZ) \cong H^2(\Rs,\ZZ)$, the following maps
 $   \sigma^* \colon L'(\Rs) \to L'(\Rs_0)$ and $ \sigma^* \colon L(\Rs) \to L(\Rs_0).$
They induce an  isomorphism
  $  \sigma^* \colon H(\Rs) \to H(\Rs_0)$.
A direct  verification of the following statement (which basically follows from Theorem
~\ref{th:class.rational.case} as well)
is left to the reader.
\begin{proposition}
Consider the cycles $s_{h,\Rs}$ and $s_{h,\Rs_0}$ in the corresponding  resolutions. Set
 \begin{align*}
     \mathrm{Full}^1(\Rs)&:= \left \{ \Ss{\Rs}(-s_{h,\Rs})\in \mathrm{Pic}(\Rs) \text{ with $h\in H(\Rs)$} \right\},\\
     \mathrm{Full}^1(\Rs_0)&:= \left\{ \Ss{\Rs_0}(-s_{h,\Rs_0})\in \mathrm{Pic}(\Rs_0) \text{ with $h\in H(\Rs_0)$} \right\}.
 \end{align*}
 Then, the map $\sigma$ induces the bijection
  $   \sigma^*\colon \mathrm{Full}^1(\Rs) \to \mathrm{Full}^1(\Rs_0)$.
 Furthermore, the induced map by $\sigma$ satisfies
 \begin{equation*}
     \sigma^* \Ss{\Rs}(-s_{h,\Rs}) =  \Ss{\Rs_0}(-s_{\sigma^*(h),\Rs_0}).
 \end{equation*}
\end{proposition}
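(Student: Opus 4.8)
The plan is to deduce the statement from Theorem~\ref{th:class.rational.case} combined with the good behaviour of global generation under blow-up recorded in Proposition~\ref{Prop:Full.generation.global.sec}, so that the whole argument reduces to Chern-class bookkeeping. First I would observe that for each $h\in H(\Rs)$ the line bundle $\Ss{\Rs}(-s_{h,\Rs})$ is full by Theorem~\ref{th:class.rational.case}(1), and that it is generated by global sections: since $c_1(\Ss{\Rs}(-s_{h,\Rs}))=-s_{h,\Rs}\in-\mathcal{S}'$ and $(X,x)$ is rational, this is exactly the global generation used in step (i) of the proof of that theorem. Hence Proposition~\ref{Prop:Full.generation.global.sec} applies to $\sigma\colon\Rs_0\to\Rs$ and yields that $\sigma^*\Ss{\Rs}(-s_{h,\Rs})$ is again a full sheaf on $\Rs_0$ (still generated by global sections), with $\sigma_*\sigma^*\Ss{\Rs}(-s_{h,\Rs})\cong\Ss{\Rs}(-s_{h,\Rs})$.

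Next I would identify this pulled-back full sheaf. Because $\Rs_0$ is a resolution of the same rational singularity $X$, Theorem~\ref{th:class.rational.case}(2) forces $\sigma^*\Ss{\Rs}(-s_{h,\Rs})=\Ss{\Rs_0}(-s_{h',\Rs_0})$ for a unique $h'\in H(\Rs_0)$; as the Chern class determines a line bundle on a rational singularity, it suffices to match Chern classes. By functoriality of $c_1$ one has $c_1(\sigma^*\Ss{\Rs}(-s_{h,\Rs}))=\sigma^*(-s_{h,\Rs})$, whose class in $H(\Rs_0)$ is $\sigma^*(h)$; comparing with the class $h'$ of $-s_{h',\Rs_0}$ gives $h'=\sigma^*(h)$, which is precisely the asserted formula $\sigma^*\Ss{\Rs}(-s_{h,\Rs})=\Ss{\Rs_0}(-s_{\sigma^*(h),\Rs_0})$.

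For the bijection I would then argue as follows. The assignment $h\mapsto\Ss{\Rs}(-s_{h,\Rs})$ is a bijection $H(\Rs)\to\mathrm{Full}^1(\Rs)$, since $[s_{h,\Rs}]=h$ recovers $h$ from the bundle, and likewise over $\Rs_0$. Under these two identifications the displayed formula says exactly that $\sigma^*$ intertwines the map on full sheaves with the map $\sigma^*\colon H(\Rs)\to H(\Rs_0)$, which is an isomorphism by the discussion preceding the proposition. Therefore $\sigma^*\colon\mathrm{Full}^1(\Rs)\to\mathrm{Full}^1(\Rs_0)$ is a bijection; injectivity is moreover immediate from $\sigma_*\sigma^*\cong\mathrm{id}$.

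The only genuine point requiring care, and the step I regard as the heart of the argument, is the preservation of fullness and global generation under the blow-up, i.e.\ the input supplied by Proposition~\ref{Prop:Full.generation.global.sec}; granting it, everything reduces to the elementary class computation above. As an independent sanity check one can verify the lattice identity $\sigma^*s_{h,\Rs}=s_{\sigma^*(h),\Rs_0}$ directly: anti-nefness of $\sigma^*s_{h,\Rs}$ on $\Rs_0$ is immediate from the projection formula, since $(\sigma^*s_{h,\Rs},\mathcal{E})=0$ for the new $(-1)$-curve $\mathcal{E}$ and $(\sigma^*s_{h,\Rs},\tilde{E}_v)=(s_{h,\Rs},E_v)\le 0$ for the strict transforms $\tilde{E}_v$, whereas its minimality within the $H(\Rs_0)$-class is exactly what the full-sheaf characterization provides.
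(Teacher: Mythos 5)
Your proposal is correct and follows exactly the route the paper intends (the paper leaves the verification to the reader, noting it ``basically follows from Theorem~\ref{th:class.rational.case}''): pull back via Proposition~\ref{Prop:Full.generation.global.sec} using global generation from Lipman's theorem, identify the result through Theorem~\ref{th:class.rational.case}(2) and the injectivity of $c_1$ on $\mathrm{Pic}(\Rs_0)$, and deduce bijectivity from the isomorphism $\sigma^*\colon H(\Rs)\to H(\Rs_0)$. The only blemish is a harmless, self-consistent sign slip (the class of $\sigma^*(-s_{h,\Rs})$ is $-\sigma^*(h)$ and that of $-s_{h',\Rs_0}$ is $-h'$, so the comparison still yields $h'=\sigma^*(h)$), which does not affect the argument.
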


\begin{remark}
\label{remark:full.flat.rational}
Let $(X,x)$ be a rational normal surface singularity. By Theorem~\ref{th:class.rational.case} all the rank one full sheaves are classified by the group $H$. Since $H=H^2(\Sigma,\ZZ)$, where $\Sigma$ is the link of $(X,x)$, $H$ is a topological invariant. In particular, Theorem~\ref{th:class.rational.case} provides  a topological classification. Thus, the set of
 full sheaves of rank one only depends on the topology of $(X,x)$.
\end{remark}
The following theorem makes Remark~\ref{remark:full.flat.rational} even more precise.
\begin{theorem}\label{th:new}
Let $(X,x)$ be a rational normal surface singularity. Let $\pi\colon \Rs \to X$ be any resolution. Then, any full $\Ss{\Rs}$-sheaf of rank one is flat.
\end{theorem}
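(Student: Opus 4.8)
The plan is to reduce the statement to the surjectivity of the forgetful morphism $\Psi\colon H^1(\Sigma,\CC/\ZZ)\to \mathrm{Cl}(X,x)$, and then to obtain that surjectivity by a short diagram chase exploiting that the link of a rational singularity is a rational homology sphere. By the correspondence $\pi_*$ underlying Theorem~\ref{th:class.rational.case}, a rank one full sheaf $\Ss{\Rs}(-s_h)$ is flat precisely when the associated reflexive module $M=\pi_*\Ss{\Rs}(-s_h)\in\mathrm{Cl}(X,x)$ lies in the image of $\Psi$; essential surjectivity of the forgetful functor $\mathrm{Ref}^{1,\nabla}_X\to\mathrm{Ref}^1_X$ is exactly the surjectivity of $\Psi$. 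Hence it suffices to prove that $\Psi$ is onto.

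First I would record the vanishings coming from rationality. Since $(X,x)$ is rational, $\Sigma$ is a rational homology sphere and $p_g=0$; thus $H^1(\Sigma,\CC)=H^2(\Sigma,\CC)=0$ and $H^1(\Sigma,\ZZ)=0$, while $H^2(\Sigma,\ZZ)$ is entirely torsion and equals $H$. Feeding $H^1(\Sigma,\CC)=H^2(\Sigma,\CC)=0$ into the long exact sequence~\eqref{eq:long.sequence.coh.Sigma} shows that $\Phi\colon H^1(\Sigma,\CC/\ZZ)\to \mathrm{Tors}(H^2(\Sigma,\ZZ))\cong H$ is an isomorphism: it is injective because the preceding term $H^1(\Sigma,\CC)$ vanishes, and surjective because the following term $H^2(\Sigma,\CC)$ vanishes, so the image of $\Phi$ is the full kernel of $H^2(\Sigma,\ZZ)\to H^2(\Sigma,\CC)$, namely all of $H$. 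In parallel, feeding $p_g=0$ and $H^1(\Sigma,\ZZ)=0$ into~\eqref{eq:def.c1.overline} collapses that sequence to an isomorphism $\overline{c}_1\colon \mathrm{Cl}(X,x)\xrightarrow{\ \sim\ }H$.

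The two isomorphisms are now tied together by Lemma~\ref{lemma:Rep.H2}, which gives $\overline{c}_1\circ\Psi=\Phi$. Since both $\Phi$ and $\overline{c}_1$ are isomorphisms, $\Psi=\overline{c}_1^{-1}\circ\Phi$ is itself an isomorphism; in particular $\Psi$ is surjective. Therefore every element of $\mathrm{Cl}(X,x)$, equivalently every rank one reflexive module, lies in the image of the forgetful morphism and so admits an integrable connection. Translating back through Theorem~\ref{th:class.rational.case}, every rank one full sheaf $\Ss{\Rs}(-s_h)$ is flat, which is the assertion.

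The argument is essentially formal once the vanishings are in place, so there is little genuine obstacle; the only points requiring care are the bookkeeping in the two exact sequences~\eqref{eq:long.sequence.coh.Sigma} and~\eqref{eq:def.c1.overline}, namely checking that the rationality hypothesis kills exactly the neighbouring terms needed to force $\Phi$ and $\overline{c}_1$ to be bijective, and confirming that the identification $\mathrm{Tors}(H^2(\Sigma,\ZZ))\cong H$ is compatible with the target of $\overline{c}_1$ so that Lemma~\ref{lemma:Rep.H2} applies verbatim. Everything else is the commutativity already recorded in that lemma.
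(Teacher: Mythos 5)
Your proposal is correct and follows essentially the same route as the paper's own proof: the paper also invokes Lemma~\ref{lemma:Rep.H2} for the commutative square $\overline{c}_1\circ\Psi=\Phi$ and deduces from~\eqref{eq:long.sequence.coh.Sigma} and~\eqref{eq:def.c1.overline} that $\Phi$ and $\overline{c}_1$ are isomorphisms, whence $\Psi$ is one too. Your write-up merely makes explicit the vanishings ($H^1(\Sigma,\CC)=H^2(\Sigma,\CC)=0$, $p_g=0$, $H^1(\Sigma,\ZZ)=0$) that the paper leaves implicit.
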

\begin{proof}
By Lemma~\ref{lemma:Rep.H2}, the following commutative diagram commutes
\begin{equation*}
    \begin{diagram}
         \mathrm{Cl}(X,x) &\rTo^{\overline{c}_1}& H\\
         \uTo^{\Psi} & & \uTo^=\\
         H^1(\Sigma, \CC/\ZZ) &\rTo^{\Phi}& H^2(\Sigma, \ZZ)
    \end{diagram}
\end{equation*}
Since $(X,x)$ is a rational singularity, by~\eqref{eq:long.sequence.coh.Sigma} and~\eqref{eq:def.c1.overline} the maps $\Phi$ and $\overline{c}_1$ are isomorphism. Hence, the map $\Psi$ is also an isomorphism.
\end{proof}

\begin{remark}
\label{remark:shcases}
(1) The computation of the set
$\{s_h\}_{h\in H}$, even for very concrete families of topological types
of singularities (families of links)
in general is not easy.
Recall that there exists an algorithm which provides $s_h$, cf. \ref{ss:GenLau},
however a close expression in general is missing. In special cases
 the known formulae are surprisingly technical and arithmetical.
For the cyclic quotient (string graphs) and star shaped graphs see \cite{Ne1}, for surgery
3-manifolds see \cite{Nesur1,Nesur2}. (All these are reported in \cite{NeBook} as well.)
 For general rational singularities a concrete close formula is not known (by the authors).

 (2) In \cite{Wu}  Wunram determined
 the first Chern classes of full sheaves associated with the minimal
 resolution of cyclic quotient singularities,  as cycles in $L'$. These  expressions can be
 identified with the expressions of $\{s_{h}\}_h$ given in \cite{Ne1}.
 We emphasize that in \cite{Wu} the universal property of this Chern class, as the minimal element of the Lipman cone with fixed $h\in H$, was not recognized. We believe that this new conceptual point of view brings essentially new perspectives in the theory
  (besides the generalization of full sheaves to the rational and minimally elliptic cases).

  (3) By the McKay's correspondence in the ADE case, non-trivial full shaves (of any rank)
are in bijection with the irreducible exceptional curves of the minimal resolution:
the Chern classes are of type $\{-E_v^*\}_{v\in V}$ (where each $-E_v^*$  can be represented, as a divisor, by a cut of $E_v$). The rank of $\Sf{M}$ with $c_1(\Sf{M})=-E_v^*$ is the $E_v$-multiplicity
$m_v$ of $Z_{min}$. Hence, the rank one non-trivial full sheaves  correspond to those components $E_v$ with $m_v=1$. On the other hand,
 for other,  more general singularities,  this correspondence between $\{E_v\}_v$ and
the full shaves is broken. (This happens in our rational case too:
$\{E_v\}_{v\in V}$ versus $\{s_h\}_{h\in H}$.) In order to keep (at least) part of this correspondence
$\{E_v\}_{v\in V}\, \leftrightarrow\, {\rm Ref}_X$, Wunram in \cite{Wu} introduced the family
of special full shaves (they will be discussed below).
\end{remark}

\begin{definition} Let $(X,x)$ be a rational
 normal surface singularity and $\Rs\to X$ a resolution.
A full sheaf $\Sf{M}\in{\rm Pic}(\Rs)$ is called {\it special} if $H^1(\Rs, \Sf{M}^*)=0$.
Then $\pi_*\Sf{M}$ is called a special reflexive module.
\end{definition}
For rational $(X,x)$ the trivial sheaf $\mathcal{O}_{\Rs}$ is full and special. Wunram in \cite[Theorem 1.2(b)]{Wu} proved the following fact.
\begin{proposition}\label{proposition:Wunram.quot}
Let $\Rs$ be the \underline{minimal} resolution of a rational  singularity. Then special
non-trivial  indecomposable reflexive modules (i.e. special full sheaves of $\Rs$) correspond bijectively with irreducible components
$\{E_v\}_{v\in V}$. By this correspondence $\Sf{M}_v\,\leftrightarrow\, E_v$, realized by
$c_1(\Sf{M}_v)=-E^*_v\in L'$,  one also has ${\rm rank}(\Sf{M}_v)=m_v$ (the $E_v$-coeffcient of $Z_{min}$).
\end{proposition}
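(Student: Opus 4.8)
The plan is to show that the first Chern class sets up a bijection between the nontrivial special indecomposable full sheaves on the minimal resolution and the set $\{-E_v^*\}_{v\in V}\subset L'$, and then to extract the rank from a Riemann--Roch computation on the fundamental cycle $Z_{min}$. Throughout I would use that a rational graph is a tree of $\PP^1$'s, so every component satisfies $E_v\cong\PP^1$, together with Esnault's criterion (Proposition~\ref{Prop:fullcondiciones.rational}): a locally free $\Sf{M}$ is full if and only if it is generated by global sections and $H^1_E(\Rs,\Sf{M})=0$. I would also use the standard fact (Esnault \cite{Esnault}) that a full sheaf on a rational singularity has $H^1(\Rs,\Sf{M})=0$.

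First I would analyze the restriction of a full sheaf $\Sf{M}$ to the components. Since $\Sf{M}$ is generated by global sections, each $\Sf{M}|_{E_v}$ is a globally generated bundle on $\PP^1$, hence $\Sf{M}|_{E_v}\cong\bigoplus_i\Ss{E_v}(a_{v,i})$ with every $a_{v,i}\ge 0$, and $\deg(\Sf{M}|_{E_v})=(c_1(\Sf{M}),E_v)=\sum_i a_{v,i}$. The speciality hypothesis $H^1(\Rs,\Sf{M}^{\smvee})=0$ enters precisely here: running the standard restriction sequences for the divisors $E_v$ along a computation sequence that builds up $Z_{min}$, and applying Serre duality on each $E_v\cong\PP^1$, the vanishing $H^1(\Sf{M}^{\smvee})=0$ should force every $a_{v,i}\in\{0,1\}$. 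Consequently $(c_1(\Sf{M}),E_v)=\#\{i: a_{v,i}=1\}\ge 0$ for all $v$, so $c_1(\Sf{M})=-\sum_v d_vE_v^*$ with integers $d_v\ge 0$. This translation of the homological speciality condition into a bound on the splitting type over each $E_v$ is the step I expect to be the main obstacle, since it is where the cohomological hypothesis must be converted into combinatorics of the resolution graph.

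Next I would establish additivity and indecomposability. The map $\Sf{M}\mapsto c_1(\Sf{M})$ is additive on direct sums, and a special full sheaf is determined up to isomorphism by its Chern class because $\mathrm{Pic}^0(\Rs)\simeq\CC^{p_g}=0$ in the rational case, so there are no nontrivial moduli; this rigidity is part of Esnault's structure theory. Combined with the monoid structure of $\{-\sum_v d_vE_v^*\}$, a special full sheaf for which some $d_v\ge 2$, or for which two distinct $d_v$ are positive, must split off a proper special summand. Hence a \emph{nontrivial indecomposable} special full sheaf has exactly one $d_v=1$ and all others zero, i.e.\ $c_1(\Sf{M})=-E_v^*$; conversely each such Chern class is realized by a unique special full sheaf $\Sf{M}_v$ (existence by the general construction of full sheaves with prescribed anti-nef Chern class, speciality arranged by minimality), which is then automatically indecomposable. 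This yields the bijection $\Sf{M}_v\leftrightarrow E_v$ with $c_1(\Sf{M}_v)=-E_v^*$.

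Finally, for the rank I would restrict to the fundamental cycle $Z_{min}=\sum_v m_vE_v$ and apply Riemann--Roch on the (possibly non-reduced) curve $Z_{min}$: for a locally free sheaf $\Sf{F}$ of rank $r$ one has $\chi(\Sf{F}|_{Z_{min}})=(c_1(\Sf{F}),Z_{min})+r\,\chi(\Ss{Z_{min}})$, where $\chi(\Ss{Z_{min}})=1-p_a(Z_{min})=1$ by rationality. Feeding the two vanishings $H^1(\Rs,\Sf{M})=0$ (fullness) and $H^1(\Rs,\Sf{M}^{\smvee})=0$ (speciality) through the sequence $0\to\Sf{M}(-Z_{min})\to\Sf{M}\to\Sf{M}|_{Z_{min}}\to 0$ determines $\chi(\Sf{M}_v|_{Z_{min}})$ and gives $\rk(\Sf{M}_v)=(c_1(\Sf{M}_v),Z_{min})$. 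Since $(E_v^*,Z_{min})=\sum_w m_w(E_v^*,E_w)=-m_v$, we obtain $\rk(\Sf{M}_v)=(-E_v^*,Z_{min})=m_v$, as claimed. The whole scheme recovers Wunram's result \cite{Wu}, with the speciality-to-splitting-type translation being the load-bearing step.
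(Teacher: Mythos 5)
Your proposal has genuine gaps, and they sit exactly at the three statements that constitute the theorem. Before detailing them, note for context that the paper itself does not prove this proposition: it is imported verbatim from Wunram \cite[Theorem~1.2(b)]{Wu} and used as a black box (the paper only proves the rank-one statements built on top of it, Theorem~\ref{theorem:special.rat} and Corollary~\ref{theorem:special.rat2}). So you are in effect reconstructing the main theorem of Wunram's paper. Your perimeter steps are fine: global generation on the minimal resolution gives $\Sf{M}|_{E_v}$ globally generated on $E_v\cong \PP^1_{\CC}$, hence $-c_1(\Sf{M})\in\mathcal{S}'$; and the step you flag as the main obstacle (speciality forces all $a_{v,i}\in\{0,1\}$) is actually the easy part, since the cokernel of $H^1(\Rs,\Sf{M}^{\smvee})\to H^1(E_v,\Sf{M}^{\smvee}|_{E_v})$ injects into $H^2(\Rs,\Sf{M}^{\smvee}(-E_v))$, which vanishes on the non-compact surface $\Rs$, so $H^1(\Rs,\Sf{M}^{\smvee})=0$ kills $h^1\bigl(\bigoplus_i\Ss{E_v}(-a_{v,i})\bigr)$ and forces $a_{v,i}\leq 1$.

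The fatal problems are these. (i) Your uniqueness argument, ``a special full sheaf is determined by its Chern class because $\mathrm{Pic}^0(\Rs)=0$,'' is a rank-one argument: the sheaves $\Sf{M}_v$ have rank $m_v$, which is $\geq 2$ in general (see the paper's Example~\ref{example:E*s}, where $m_2=m_3=2$), and higher-rank bundles are not classified by $c_1$ merely because $p_g=0$. Worse, the claim as stated is false: $\Sf{M}_v\oplus\Ss{\Rs}$ is again special and full with the same Chern class $-E_v^*$, so uniqueness can only hold among indecomposables, a condition your Chern-class bookkeeping cannot see. (ii) The splitting claim (some $d_v\geq 2$, or two positive $d_v$'s, forces a proper special summand to split off) does not follow from additivity of $c_1$ on direct sums; a sheaf whose Chern class decomposes need not decompose --- the indecomposable line bundles $\Ss{\Rs}(-s_h)$, with $s_h$ a nontrivial sum of several $E_v^*$'s, show exactly this. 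That decomposition statement \emph{is} Wunram's theorem, not a formal consequence of a monoid structure. (iii) Existence of an indecomposable special full sheaf of rank exactly $m_v$ with $c_1=-E_v^*$ is waved away (``general construction \dots\ speciality arranged by minimality''); no such construction appears in the paper or in your sketch, and no rank-one construction can produce it when $m_v\geq 2$. (iv) The rank derivation also fails on its own terms: Riemann--Roch on $Z_{min}$ takes the rank as an \emph{input}, and the two vanishings $H^1(\Rs,\Sf{M})=H^1(\Rs,\Sf{M}^{\smvee})=0$ alone cannot output it --- the trivial sheaf satisfies both, yet $\rk(\Ss{\Rs})=1\neq 0=(c_1(\Ss{\Rs}),Z_{min})$. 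The formula $\rk(\Sf{M})=(c_1(\Sf{M}),Z_{min})$ holds precisely for special full sheaves with no trivial direct summand, so indecomposability must enter the cohomological computation somewhere, and in your sketch it never does.
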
 Next, we combine the statements of Theorem
\ref{th:class.rational.case} and  Proposition \ref{proposition:Wunram.quot}.

\begin{theorem}\label{theorem:special.rat}
Let $(X,x)$ be a rational singularity with a fixed resolution $\Rs$. Then, for some $s_h$ with $h\not=0$,  the following facts are equivalent:

(1) The full sheaf  $\Sf{L}={\mathcal O}_{\Rs}(-s_h)$ ($h\not=0$) is special,

(2) $(-s_h,Z_{min})=1$,

(3) $s_h=E^*_v$ for some $v\in V$ and $m_v=1$.
\end{theorem}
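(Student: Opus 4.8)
The plan is to establish the two equivalences $(2)\Leftrightarrow(3)$ and $(1)\Leftrightarrow(3)$ separately. The first is a purely lattice-theoretic computation; the second is obtained by confronting the classification of Theorem~\ref{th:class.rational.case} with Wunram's description of the special reflexive modules in Proposition~\ref{proposition:Wunram.quot}. The organising idea is that for a \emph{special} full sheaf $\Sf{M}$ one has $\rango(\Sf{M})=(-c_1(\Sf{M}),Z_{min})$, so that in the rank one situation $\Sf{M}=\Ss{\Rs}(-s_h)$ speciality should be detected exactly by the numerical identity $(-s_h,Z_{min})=1$.

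First I would dispose of $(2)\Leftrightarrow(3)$, which I expect to be routine. Writing $Z_{min}=\sum_v m_vE_v$ with all $m_v\ge 1$, and $s_h=\sum_v a_vE_v^*$ with $a_v\in\ZZ_{\ge 0}$ (legitimate because $s_h$ lies in the monoid generated over $\ZZ_{\ge 0}$ by the $E_v^*$), the relation $(E_u^*,E_v)=-\delta_{u,v}$ gives
\begin{equation*}
(-s_h,Z_{min})=\sum_v a_vm_v.
\end{equation*}
Since the $m_v$ are positive integers and the $a_v$ nonnegative integers, this sum equals $1$ if and only if a single $a_v$ is nonzero, with value $1$, and the corresponding $m_v=1$; that is, if and only if $s_h=E_v^*$ for one $v$ with $m_v=1$. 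This proves $(2)\Leftrightarrow(3)$ on every resolution (the hypothesis $h\neq0$ being used only through $s_h\neq0$, cf. Remark~\ref{remark:1}).

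Next I would prove $(1)\Leftrightarrow(3)$, first on the minimal resolution. By Theorem~\ref{th:class.rational.case} the rank one full sheaves are precisely the $\Ss{\Rs}(-s_h)$, and by Proposition~\ref{proposition:Wunram.quot} the non-trivial special full sheaves are precisely Wunram's bundles $\Sf{M}_v$, with $c_1(\Sf{M}_v)=-E_v^*$ and $\rango(\Sf{M}_v)=m_v$. A special full sheaf is therefore of rank one exactly when $m_v=1$, in which case $\Sf{M}_v=\Ss{\Rs}(-E_v^*)$; comparing first Chern classes shows that $\Ss{\Rs}(-s_h)$ is special if and only if $s_h=E_v^*$ with $m_v=1$, which is $(3)$, and then indeed $(-s_h,Z_{min})=m_v=1$. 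To move to an arbitrary fixed resolution $\Rs$, I would exploit that speciality is a property of the reflexive module $\pi_*\Sf{M}$ and hence resolution independent: writing the domination of the minimal resolution as a composition of blow-ups $\sigma\colon\Rs\to\Rs_{min}$, one has $\Ss{\Rs}(-s_h)=\sigma^*\Ss{\Rs_{min}}(-s_h)$ (Proposition~\ref{Prop:Full.generation.global.sec} and the $c_1$-compatibility recorded after Theorem~\ref{th:class.rational.case}), and $\sigma_*\Ss{\Rs}=\Ss{\Rs_{min}}$, $R^1\sigma_*\Ss{\Rs}=0$ together with the projection formula give
\begin{equation*}
H^1(\Rs,\sigma^*\Sf{N})\cong H^1(\Rs_{min},\Sf{N})
\end{equation*}
for every line bundle $\Sf{N}$ on $\Rs_{min}$; applied to $\Sf{N}=\Ss{\Rs_{min}}(-s_h)^*$ this shows that the vanishing in $(1)$ is the same on $\Rs$ as on $\Rs_{min}$.

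The hard part is to reconcile this transfer of $(1)$ with the transfer of the numerical conditions $(2)$, $(3)$: speciality is visibly resolution independent, and $(2)\Leftrightarrow(3)$ holds on each resolution, but to glue these I must know that the condition $(2)$ itself does not change under blow-up, i.e. that $(-s_{h,\Rs},Z_{min,\Rs})$ is computed identically upstairs and downstairs. This is delicate because, although $s_{h,\Rs}=\sigma^*s_{h,\Rs_{min}}$ is the total transform and one checks $\sigma^*E_v^*=E_{\tilde v}^*$ for the strict transform $\tilde v$ of $v$, the fundamental cycle $Z_{min}$ does \emph{not} pull back to $Z_{min}$, and the coefficient of $\tilde E_v$ in $Z_{min,\Rs}$ need not equal $m_v$; its behaviour depends on whether the centre of the blow-up lies on one or two components of $E$. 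I would handle this either by a direct case analysis of the multiplicities of $Z_{min}$ under a single blow-up, or---cleaner---by bypassing Wunram on $\Rs$ altogether and computing $h^1(\Rs,\Ss{\Rs}(s_h))$ directly through the generalized Laufer algorithm of Lemma~\ref{lemma:computation.seq} run from $c=-s_h$: this yields $h^1(\Rs,\Ss{\Rs}(s_h))=\sum_i\big((x_i,E_{v_i})-1\big)$, so that $(1)$ becomes the statement that every step of that sequence satisfies $(x_i,E_{v_i})=1$, and the remaining task is to identify this with $(-s_h,Z_{min})=1$. This identification is the genuine obstacle of the proof.
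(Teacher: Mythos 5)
Your equivalence $(2)\Leftrightarrow(3)$ is correct and coincides with the paper's argument, and your treatment of $(1)\Leftrightarrow(3)$ on the \emph{minimal} resolution (Theorem~\ref{th:class.rational.case} confronted with Proposition~\ref{proposition:Wunram.quot}) is also correct --- it is exactly the ``analytic proof'' that the paper mentions in a single sentence. The genuine gap is that the theorem is stated for an \emph{arbitrary} fixed resolution, and you never finish that case: you end by declaring the transfer of the numerical conditions under blow-up ``the genuine obstacle of the proof'' and leave it unresolved. Moreover, the reason you got stuck is a false assertion: you claim that for a blow-up $\sigma\colon \Rs_0\to\Rs$ at a point of $E$ the fundamental cycle does not pull back and that the coefficients of the strict transforms in $Z_{min}$ may change. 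In fact $Z_{min,\Rs_0}=\sigma^*Z_{min,\Rs}$ always holds. Indeed, $\sigma^*$ preserves intersection numbers and $(\sigma^*Z,E_{new})=0$, so $\sigma^*Z_{min,\Rs}$ is anti-nef and hence $\geq Z_{min,\Rs_0}$; conversely, writing $Z_{min,\Rs_0}=\sigma^*\bigl(\sigma_*Z_{min,\Rs_0}\bigr)+cE_{new}$, anti-nefness against $E_{new}$ gives $c\geq 0$, and then anti-nefness against the strict transforms shows that $\sigma_*Z_{min,\Rs_0}$ is anti-nef on $\Rs$, hence $\geq Z_{min,\Rs}$; the two inequalities force equality. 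Since you already invoke $s_{\sigma^*(h),\Rs_0}=\sigma^*s_{h,\Rs}$ (the unnumbered Proposition following Theorem~\ref{th:class.rational.case}), condition $(2)$ is literally blow-up invariant: $(-s_{h,\Rs_0},Z_{min,\Rs_0})=(-s_{h,\Rs},Z_{min,\Rs})$. Together with your (correct) projection-formula argument that condition $(1)$ is blow-up invariant, this closes the case of an arbitrary resolution in two lines --- so your plan does work, but only after your claim about $Z_{min}$ is replaced by the opposite, true, statement; as written the proof is incomplete.

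For comparison, the proof the paper actually writes out in detail is the combinatorial one that you gesture at in your last sentences but do not carry out. For $(1)\Rightarrow(2)$ it superposes the Laufer computation sequence of $Z_{min}$ onto $-s_h$, so that specialty, read through Lemma~\ref{lemma:computation.seq}, forces $(-s_h,E_{v(0)})=1$ at the first step and $(-s_h,E_{v_i})=0$ afterwards, whence $(-s_h,Z_{min})=1$ by summation; for $(3)\Rightarrow(1)$ it introduces the extended graph $\Gamma^e_v$, uses the criterion of \cite{GN} to conclude that $m_v=1$ makes $\Gamma^e_v$ rational, and then matches the computation sequence for $Z_{min}(\Gamma^e_v)$ with the one computing $h^1(\Ss{\Rs}(s_h))$, forcing every step to satisfy $(x_i,E_{v_i})=1$. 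That route needs neither Wunram's theorem nor any resolution-transfer argument, and is valid uniformly on every resolution; your route, once repaired as above, is shorter but rests on the (deeper) Proposition~\ref{proposition:Wunram.quot}.
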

\begin{proof}  {\it (2)} and {\it (3)} are obviously equivalent. Indeed, write $s_h$ as
$\sum_v n_vE_v^*$ with certain $n_v \in \ZZ_{\geq 0}$.
Since $Z_{min}=\sum_vm_vE_v$ is supported on $E$ (i.e. $m_v\in\ZZ_{>0}$  for any $v$),
$(-s_h,Z_{min})=\sum_vn_vm_v$.

For the equivalence {\it (1)$\Leftrightarrow$(3)} we provide two proofs. The first one is analytic,
and basically it follows from Proposition \ref{proposition:Wunram.quot}, whenever the resolution is
minimal. For a non-minimal resolution one can prove the stability of the statements with respect to
a blow-up.

However, we present another topological/combinatorial proof as well, based on the combinatorics of
rational graphs and (generalized) Laufer sequences (valid in any resolution).
It shows some additional topological bridges, and
 it can serve as a prototype for further generalizations as well.

{\it The combinatorial proof of  (1)$\Rightarrow$(2)}. If ${\mathcal L}={\mathcal O}_{\Rs}(-s_h)$ is special then
$h^1(\mathcal L^*)=0$. Then we apply Lemma \ref{lemma:computation.seq}{\it (b)}
applied for   $c=-s_h$ and the bundle  ${\mathcal L}^*$.

Since $s_h\not=0$ and $(,)$ is negative definite,  there exists some $E_{v(0)}$ such that $(-s_h, E_{v(0)})>0$.

 Then,
consider the computation sequence $\{z_i\}_{i=1}^t$
which connects $E_{v(0)}$ with $Z_{min}$.  Write also  $z_0=0$.
Since $(X,x)$ is rational, by Laufer's rationality criterion,
$(z_i, E_{v_i})=1$  for all $0< i<t$.

Next, consider the sequence  $x_i:=-s_h+z_i$
for all such $i\geq 0$. It connects $-s_h$ with $-s_h+Z_{min}$. The point is that at each intermediate
step $(x_i, E_{v_i})=(-s_h, E_{v_i})+ (z_i, E_{v_i})\geq 1$ for $i>0$.
Therefore,  $\{x_i\}_{i\geq 0}$ can be considered as the beginning of a computation sequence,
which computes $h^1(\mathcal L^*)$ in
Lemma \ref{lemma:computation.seq}{\it (b)}. But since $h^1(\mathcal L^*)=0$, we must have
$\sum_i h^1({\mathcal O}_{{\mathbb P}^1}(-x_i, E_{v_i}))=0$ for this (partial) sequence,
hence $(x_i, E_{v_i})=1$ for all $i\geq 0$. For $i=0$ this reads as
$(-s_h, E_{v(0)})=1$ and for all other $i>0$ we must have $(-s_h, E_{v_i})=0$. Since $\sum_i
 E_{v_i}=Z_{min}$, summation over $i$
gives $(-s_h, Z_{min})=1$.

{\it The combinatorial proof  (3)$\Rightarrow$(1)}\   Consider again the computation sequence $\{x_i\}_i$ which connects $c=-s_h$ with
 $s(-s_h)$. Note that $x_0=-s_h$, and (since $s_h=E^*_v$) the next step  is necessarily
 $x_1=-s_h+E_v$, since $E_v$ is the only base element $E_u$ with $(-s_h,E_u)>0$. Using Lemma
 \ref{lemma:computation.seq}{\it (b)} $h^1({\mathcal O}_{\Rs}(s_h))=\sum_i ((x_i, E_{v_i})-1)$.
 Hence we have to show that $(x_i, E_{v_i})=1$ for any $i$.

As an independent construction,
 let $\Gamma^e_v$ be the `extended' graph constructed as follows. Let $\Gamma$ be the dual resolution graph of the resolution $\Rs\to X$. Then $\Gamma^e_v$ consists of $\Gamma$ and a  new vertex $v^e$,
  which is glued to $v$ (the vertex which appears in {\it (3)}) by an edge. Let $k$ be the Euler number of  $v^e$. One sees that if $k\ll 0$ then $\Gamma^e_v$ is negative definite. Furthermore,
  in such a case $k\ll 0$,  the $E_{v^e}$-multiplicity $m^e$
  of $E_{v^e}$ in $Z_{min}(\Gamma^e_v) $ is one (for details see e.g.  \cite[Th. 4.1.3]{GN}).
  On the other hand, by  \cite[Th. 4.1.3]{GN}, $\Gamma^e_v$ is rational if and only if $m_v$ (the $E_v$
  coefficient of $Z_{min}=Z_{min}(\Gamma)$) is one. Since this appears as an assumption in {\it (3)}
  we get that $\Gamma^e_v$ is rational. Let us consider a computation sequence for $Z_{min}(\Gamma^e_v)$.
  For the first step we choose $E_{v^e}$. Then we continue by the Laufer algorithm.
  (Hence the next added term is necessarily $E_v$.)
  In this way we obtain a series $\{y_j\}_j$. By the above discussion (since $k\ll 0$ and $m^e=1$), $E_{v^e}$ will be not chosen again.

  Let us compare the two computation sequences after we rewrite them as
  $x_i=-s_h+z_i=-E_v^*+z_i$ and $y_i=E_{v^e}+\bar{z}_i$. By comparing the two algorithms (and the terms) we see that we can make along the algorithms the common choices $z_i=\bar{z}_i$
  (and both sequences end simultaneously).
  Moreover, the terms $(x_i, E_{v_i})=(y_i,E_{v_i})$ are also   equal.
 Now let us repeat what we get. By  \cite[Th. 4.1.3]{GN}, $m_v=1$ implies that $\Gamma^e_v$ is rational.
 By Laufer criterion of rationality of this graph, $(y_i,E_{v_i})=1$ for every $i$, hence by the
 coincidence of the two sequences $(x_i,E_{v_i})=1$ too. Hence $h^1({\mathcal O}_{\Rs}(s_h))=0$ by
Lemma   \ref{lemma:computation.seq}{\it (b)}. Hence ${\mathcal O}_{\Rs}(-s_h)$ is special.

 [Though in the proof the next additional info is not visible, it might help the reader.
  Along the steps of the sequence $\{x_i\}_i$  and $\{y_i\}_i$
  we accumulate $\sum_iE_{v_i}= s(-s_h)+s_h= Z_{min}(\Gamma^e_v)-E_{v^e}$. In fact, it can be shown that this is a sum of Artin fundamental cycles of different support in $\Gamma$.  The first one is
 exactly $Z_{min}$, this first subsequence was considered in the proof of
{\it  (1)$\Rightarrow$(2)}. The tower of fundamental cycles is explicitly described in
 the proof of  \cite[Th. 4.1.3]{GN}. See also \cite{LT}.]
\end{proof}

Theorem \ref{theorem:special.rat} was formulated from the perspective of the Chern class $-s_h$. In the next statement  we reformulate it as  an equivalence from the perspective of the exceptional divisors of the resolution.
(Maybe it is worth to emphasize that for a graph  it can happen that
$s_{[E_v^*]}=s_{[E_u^*]}=E_v^*$ for some $v\not=u$, $v,u\in V$, see e.g. Example \ref{example:E*s}.)
 Note that the condition $s_h=E^*_v$ in part {\it (3)} says that $h=[E^*_v]$ and
$s_{[E^*_v]}=E^*_v$. Hence, {\it (3)} reads as $s_{[E^*_v]}=E^*_v$ and $m_v=1$ (with
 $h=[E^*_v]$). A non-obvious  point is that in the minimal resolution the condition $m_v=1$ by oneself already guarantees
 $s_{[E^*_v]}=E^*_v$. 

 \begin{corollary}\label{theorem:special.rat2}
Let $(X,x)$ be a rational singularity and let  $\Rs$ be its  \underline{minimal} resolution.
Then  the following facts are equivalent:

(1) $\Gamma^e_v$ is rational,

(2) $m_v=1$,

(3) $s_{[E^*_v]}=E^*_v$ and $m_v=1$,

(4) the  sheaf  $\Sf{L}={\mathcal O}_{\Rs}(-E^*_v)$  is non-trivial special full.

If any of these conditions hold then $[E^*_v]\not=0$.

Hence,  the  rank one non-trivial special full sheaves are classified in the minimal resolution by
vertices with $m_v=1$.
\end{corollary}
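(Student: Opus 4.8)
The plan is to prove Corollary~\ref{theorem:special.rat2} by assembling the equivalences already established and then supplying the one genuinely new ingredient, namely that in the \emph{minimal} resolution the numerical condition $m_v=1$ forces $s_{[E^*_v]}=E^*_v$. First I would record the equivalence $(1)\Leftrightarrow(2)$, which is exactly the characterization of rationality of the extended graph $\Gamma^e_v$ via the $E_v$-coefficient of $Z_{min}$ quoted from \cite[Th.~4.1.3]{GN} in the combinatorial proof of Theorem~\ref{theorem:special.rat}. Next, the implication $(3)\Rightarrow(2)$ is trivial (it contains $m_v=1$ as a clause), and $(3)\Leftrightarrow(4)$ is immediate from Theorem~\ref{theorem:special.rat}: condition $(3)$ of that theorem says precisely $s_h=E^*_v$ with $m_v=1$ where $h=[E^*_v]$, and its equivalence with speciality of $\mathcal{O}_{\Rs}(-s_h)$ is what Theorem~\ref{theorem:special.rat} delivers. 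So the whole corollary reduces to closing the loop by proving $(2)\Rightarrow(3)$, i.e.\ that $m_v=1$ implies $s_{[E^*_v]}=E^*_v$.

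The hard part is exactly this implication $(2)\Rightarrow(3)$, and I would attack it using the minimality of the resolution together with the Laufer/tower structure already present in the proof of Theorem~\ref{theorem:special.rat}. The strategy is as follows. Set $h=[E^*_v]$; by the properties of the Lipman cone there is a unique minimal $s_h\in\mathcal{S}'_h$ with $s_h\le E^*_v$ (note $E^*_v\in\mathcal{S}'$ since $(E^*_v,E_u)=-\delta_{uv}\le 0$), so it suffices to exclude $s_h<E^*_v$. I would run the generalized Laufer algorithm of Section~\ref{ss:GenLau} \emph{backwards} in spirit: the combinatorial proof of $(3)\Rightarrow(1)$ in Theorem~\ref{theorem:special.rat} shows that, when $m_v=1$, the computation sequence starting at $-E^*_v$ and the sequence for $Z_{min}(\Gamma^e_v)$ starting at $E_{v^e}$ can be synchronized, and that $E_{v^e}$ is never chosen a second time precisely because $m^e=1$ for $k\ll0$. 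The key extraction is that the accumulated cycle along $\{x_i\}$ is $s(-E^*_v)+E^*_v = Z_{min}(\Gamma^e_v)-E_{v^e}$, and that in the \emph{minimal} resolution this forces the bundle $\mathcal{O}_{\Rs}(-E^*_v)$ to already sit at the vertex of its $\mathcal{S}'_h$-cone.

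Concretely, I would argue that minimality of $\Rs$ rules out the phenomenon that would make $s_h$ strictly smaller than $E^*_v$: a gap $\delta=E^*_v-s_h>0$ would, via the vanishing machinery of Theorem~\ref{th:class.rational.case}(2) (the Riemann--Roch contradiction $0=\chi(\delta)-(\delta,s_h)>0$ obtained there with $\ell'=E^*_v$), already be impossible unless $\chi(\delta)=0$, and in the rational case $\chi(\delta)>0$ for $\delta>0$. Thus the same $\chi$-positivity argument that powers part (2) of Theorem~\ref{th:class.rational.case} shows $\delta=0$, i.e.\ $s_{[E^*_v]}=E^*_v$, giving $(2)\Rightarrow(3)$ and closing the cycle $(1)\Leftrightarrow(2)\Leftrightarrow(3)\Leftrightarrow(4)$. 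Finally, for the last assertions: if any condition holds then $m_v=1$ and $s_{[E^*_v]}=E^*_v\ne 0$, so $[E^*_v]\ne 0$ by Remark~\ref{remark:1} (since $s_h\ne0\iff h\ne0$); and the classification statement is then just the reformulation of $(2)\Leftrightarrow(4)$, asserting that the nontrivial rank one special full sheaves on the minimal resolution are indexed exactly by the vertices $v$ with $m_v=1$. I expect the delicate point to be verifying that minimality (as opposed to an arbitrary resolution) is genuinely what guarantees $s_{[E^*_v]}=E^*_v$ rather than merely $s_{[E^*_v]}\le E^*_v$; all other steps are bookkeeping on top of Theorems~\ref{th:class.rational.case} and~\ref{theorem:special.rat}.
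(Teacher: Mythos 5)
Your reduction of the corollary to the single implication \emph{(2)$\Rightarrow$(3)} is sound, and your treatment of \emph{(1)$\Leftrightarrow$(2)}, \emph{(3)$\Rightarrow$(2)}, \emph{(3)$\Leftrightarrow$(4)} and of the claim $[E^*_v]\neq 0$ matches the paper's. But your proof of the one step you correctly identify as the crux — that $m_v=1$ in the minimal resolution forces $s_{[E^*_v]}=E^*_v$ — does not work. The Riemann--Roch contradiction $0=\chi(\delta)-(\delta,s_h)>0$ in the proof of Theorem~\ref{th:class.rational.case}(2) is not available here: the equality $\chi(\Ss{\delta}(-s_h))=0$ there rests on \emph{two} vanishings, and while $h^1(\Ss{\delta}(-s_h))=0$ follows for free from Lipman's vanishing, the vanishing $h^0(\Ss{\delta}(-s_h))=0$ is deduced from $H^1(\Rs,\Ss{\Rs}(K_{\Rs}+\ell'))=0$, which is exactly the Serre dual of the $H^1_E$-condition in the definition of a full sheaf. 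In other words, that argument takes \emph{fullness of} ${\mathcal O}_{\Rs}(-E^*_v)$ as its hypothesis, and fullness is precisely what is in question at this point of your proof; assuming it is circular. The failure is not cosmetic: your proposed argument uses neither the minimality of $\Rs$ nor the hypothesis $m_v=1$, so if it were valid it would prove $s_{[E^*_v]}=E^*_v$ for \emph{every} vertex of \emph{every} resolution of a rational singularity. This contradicts the paper's own Example~\ref{example:E*s}: in case (c) one has $s_{[E_2^*]}=E^*_4\neq E^*_2$ (minimal resolution, $m_2=2$), and in case (e), after a blow-up, $m_{new}=1$ yet $s_{[E^*_{new}]}=E^*_4\neq E^*_{new}$ and ${\mathcal O}_{\Rs}(-E^*_{new})$ is not full. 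The preliminary paragraph about running the Laufer algorithm ``backwards'' is only heuristic and does not supply the missing input either.

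The paper closes the loop the other way around: it proves \emph{(2)$\Rightarrow$(4)} directly from Wunram's Proposition~\ref{proposition:Wunram.quot}, an analytic statement valid precisely for the \underline{minimal} resolution: if $m_v=1$, Wunram's theorem produces a rank-one non-trivial special full sheaf with Chern class $-E^*_v$, which (Pic being determined by $c_1$ in the rational case) must be ${\mathcal O}_{\Rs}(-E^*_v)$. Only \emph{then}, since this sheaf is full, does Theorem~\ref{th:class.rational.case}(2) give $E^*_v=s_{[E^*_v]}$, i.e.\ \emph{(3)}; this is exactly how minimality enters, through Wunram's input, and it is the ingredient your proposal is missing. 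To repair your write-up you should either quote Proposition~\ref{proposition:Wunram.quot} for \emph{(2)$\Rightarrow$(4)} as the paper does, or supply a genuinely new combinatorial proof that $m_v=1$ implies $s_{[E^*_v]}=E^*_v$ in the minimal resolution — but the latter cannot be the $\chi$-positivity argument, which has no access to the relevant hypotheses.
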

\begin{proof}
{\it (1)}$\Leftrightarrow${\it (2)} follows from  \cite[Th. 4.1.3]{GN} or \cite{LT}.
Let us verify that if any of the above conditions hold then $[E^*_v]\not=0$, hence
Theorem \ref{theorem:special.rat} can be applied. Indeed, assume e.g.  that $m_v=1$. Then
by Proposition \ref{proposition:Wunram.quot} we get that ${\mathcal O}_{\Rs}(-E^*_v)$ is full.
Assume that $[E^*_v]=0$, that is, $E^*_v\in L_{>0}$. Since ${\mathcal O}_{\Rs}(-E^*_v)$ is full,
$H^1(\Rs,{\mathcal O}_{\Rs}(K_{\Rs}+E^*_v))=0$, hence $H^1(E^*_v,
{\mathcal O}_{E^*_v}(K_{\Rs}+E^*_v))=0$ too, which by Serre duality implies $H^0({\mathcal O}_{E^*_v})=0$, which cannot hold since
$H^0({\mathcal O}_{E^*_v})$ contains at least the constants.

Hence, for all the cases we can assume that $[E^*_v]\not=0$.
Then
{\it (3)}$\Leftrightarrow${\it (4)} follows from  Theorem \ref{theorem:special.rat},
and  {\it (2)}$\Rightarrow${\it (4)} from
Proposition \ref{proposition:Wunram.quot} and  {\it (3)}$\Rightarrow${\it (2)} is obvious.
\end{proof}

It is interesting to compare (directly) {\it (1)} and {\it (4)}. Their equivalence relates
a (rational) surgery property of the link with special reflexive modules of the singularity.

In an arbitrary (non-minimal) resolution {\it (3)}$\Leftrightarrow${\it (4)} still holds
(since $[E^*_v]\not=0$ and Theorem \ref{theorem:special.rat} remain  valid). However  {\it (2)}$\Rightarrow${\it (3)}
and Proposition \ref{proposition:Wunram.quot}
do  not necessarily hold, see Example \ref{example:E*s}(e) when we create a redundant vertex with $m_v=1$. In non-minimal resolutions {\it (3)} is the right index set  for the classification.

\begin{example}\label{example:E*s}
Consider the following resolution graph $\Gamma$. By a computation $H=\ZZ_7$. The universal abelian covering
(with Galois group $H$) is the Poincar\'e sphere $\Sigma(2,3,5)$ with finite fundamental group,
hence $\Gamma$ is the graph of a quotient singularity.

\begin{picture}(300,45)(0,0)
\put(50,35){\makebox(0,0){$-2$}}
\put(75,35){\makebox(0,0){$-2$}}
\put(100,35){\makebox(0,0){$-2$}}
\put(125,35){\makebox(0,0){$-2$}}
\put(150,35){\makebox(0,0){$-3$}}

\put(50,15){\makebox(0,0){$E_1$}}
\put(75,15){\makebox(0,0){$E_2$}}
\put(125,15){\makebox(0,0){$E_3$}}
\put(150,15){\makebox(0,0){$E_4$}}
\put(90,5){\makebox(0,0){$-2$}}
\put(50,25){\circle*{5}}
\put(75,25){\circle*{5}}
\put(100,25){\circle*{5}}
\put(125,25){\circle*{5}}
\put(150,25){\circle*{5}}
\put(100,5){\circle*{5}}
\put(50,25){\line(1,0){100}}
\put(100,5){\line(0,1){20}}
\end{picture}
\end{example}

We have the following cases regarding the different irreducible exceptional curves.

(a) $s_{[E_1^*]}=E^*_1$ and $m_1=1$, hence with Chern class $-E^*_1$ there exists a rank one
special full sheaf.

(b) $s_{[E_4^*]}=r_{[E_4^*]}=E^*_4$ and $m_{4}=1$, hence with Chern class $-E^*_4$ there exists a rank one
special full sheaf.

(c) $s_{[E_2^*]}=s_{[E_4^*]}=E^*_4$, hence $s_{[E_2^*]}\not =E^*_2$. In fact, $E_2^*$ does not
equal with any $s_h$ $(h\in H)$. Note also that $m_2=2$. Therefore,  with Chern class $-E^*_2$ there exists no  rank one full sheaf. However, with this Chern class there exists an indecomposable  rank 2 special full sheaf.

(d)  $s_{[E_3^*]}=E^*_3$ and $m_3=2$. Therefore,  with Chern class $-E^*_3$ there exists a  rank one non-special full sheaf, and  an indecomposable  rank 2 special full sheaf.
(Compare with \cite[Example~2]{Wu} as well.)

(e) Let us blow up $\Rs$ at a generic point of $E_4$, and let
$E_{new}$ be the new exceptional divisor. Then in the new graph $m_{new}=1$, however
$s_{[E_{new}^*]}=E^*_4\not=E^*_{new}$. That is, ${\mathcal O}_{\Rs}(-E^*_{new})$ is not full.

\section{Rank one full sheaves of minimally elliptic singularities}
\label{sec:min.ell}
Let  $(X,x)$ denote a minimally elliptic singularity.
Recall that in any resolution $Z_K\in L$. As usual, $C$ denotes the elliptic cycle.

First, we characterize the full shaves of rank one with non-trivial first Chern class.

\begin{theorem}
\label{th:char.full.minimally.ellip}
Let $(X,x)$ be a minimally elliptic singularity. Let $\pi \colon \Rs \to X$ be any resolution such that the support $|C|$ of $C$ is $E$ (this happens e.g. in the minimal resolution).
If $\Sf{L}$ is full sheaf of rank one such that $[c_1(\Sf{L})]$ is non-zero,
then $c_1(\Sf{L})=-s_h$ for some $h\in H$, $h\not=0$.
\end{theorem}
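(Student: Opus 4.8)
The plan is to follow the same strategy that worked in the rational case (Theorem~\ref{th:class.rational.case}(2)), but adapted to the minimally elliptic setting where $p_g=1$ forces me to keep careful track of one extra dimension of cohomology. Let $\Sf{L}$ be full of rank one with $h:=[c_1(\Sf{L})]\neq 0$, and set $\ell':=-c_1(\Sf{L})$. First I would establish that $\ell'\in\mathcal{S}'$, i.e.\ that $\Sf{L}$ is anti-nef. In the rational case this came for free from $\Sf{L}$ being generated by global sections via Lipman; here the correct substitute is that $\Sf{L}$ is full, so condition~(1) of Proposition~\ref{fullcondiciones} says $\Sf{L}$ is generically generated by global sections, equivalently (by Proposition~\ref{prop:gggs.fixed}, whose hypothesis is exactly the injectivity in Proposition~\ref{fullcondiciones}(2)) $\Sf{L}$ has no fixed components. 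I would argue that if some $(\ell',E_v)<0$, i.e.\ $(c_1(\Sf{L}),E_v)>0$, this is compatible with global generation, but that the no-fixed-component property together with the structure of full sheaves still pins $\ell'$ into the Lipman cone; the cleanest route is to show $(c_1(\Sf{L}),E_v)\le 0$ for all $v$ directly from generic global generation. This gives $\ell'\in\mathcal{S}'_h$, hence $s_h\le\ell'$.

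Next, writing $\delta:=\ell'-s_h\in L$, $\delta\ge 0$, I want to prove $\delta=0$. The engine is the same pair of vanishings as before. From fullness, $H^1(\Rs,\Sf{L})$ injects from $H^1_E$, and via Serre duality $H^1(\Rs,\Ss{\Rs}(K_{\Rs}+\ell'))$ should be controlled; using the exact sequence
\begin{equation*}
0\to\Ss{\Rs}(K_{\Rs}+s_h)\to\Ss{\Rs}(K_{\Rs}+\ell')\to\Ss{\delta}(K_{\Rs}+\ell')\to 0
\end{equation*}
I would extract an equation relating $h^0(\Ss{\delta}(-s_h))$ to the cohomology of $\Sf{L}$, exactly as in~\eqref{eq:ssq.1}. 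Dually, from
\begin{equation*}
0\to\Ss{\Rs}(-\ell')\to\Ss{\Rs}(-s_h)\to\Ss{\delta}(-s_h)\to 0
\end{equation*}
I would relate $h^1(\Ss{\delta}(-s_h))$ to $h^1(\Ss{\Rs}(-s_h))$. The difference from the rational case is that neither $h^1(\Ss{\Rs}(-s_h))$ nor the relevant restriction cohomology vanishes automatically: Lipman's vanishing (Theorem~\ref{Th:Lipman.Vanish}) is unavailable, and I must instead use the Grauert–Riemenschneider type vanishing (Theorem~\ref{Th:GGR}) together with $Z_K=Z_{min}=C$ and $\chi(C)=0$ to bound the elliptic contribution by exactly one dimension.

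Then I would run the Riemann--Roch computation $\chi(\Ss{\delta}(-s_h))=\chi(\delta)-(\delta,s_h)$. Here $h\neq 0$ gives $s_h\neq 0$ (Remark~\ref{remark:1}) and $s_h\in\mathcal{S}'$, so $-(\delta,s_h)\ge 0$. The crucial new input is that for minimally elliptic $(X,x)$ one has $\chi(\delta)\ge 0$ for all $\delta\in L$, with $\chi(\delta)=0$ only when $\delta$ is a multiple of the elliptic cycle $C$; so unlike the strictly positive $\chi(\delta)$ of the rational case, I get $\chi(\Ss{\delta}(-s_h))\ge 0$ and must separately rule out equality. The hypothesis $h\neq 0$ is precisely what excludes the degenerate elliptic direction: if $\delta>0$ and $\chi(\delta)=0$ forced $\delta\in\ZZ_{>0}C$, I would show $(\delta,s_h)<0$ strictly (using that $s_h$ meets the full support $|C|=E$ nontrivially because $s_h\neq 0$ and $E$ is connected), again yielding a positive quantity equal to zero, a contradiction. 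The main obstacle I anticipate is exactly this elliptic borderline case $\chi(\delta)=0$: in the rational proof Artin's criterion gave strict positivity for free, whereas here I must combine the cohomological vanishings with the geometry of the elliptic cycle to force the strict inequality, and getting the comparison between $h^0(\Ss{\delta}(-s_h))=0$ and the one unavoidable dimension coming from $p_g=1$ to line up correctly is the delicate point.
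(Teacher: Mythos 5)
Your outline reproduces the paper's skeleton faithfully: no fixed components gives $\ell'=-c_1(\Sf{L})\in\mathcal{S}'$, hence $s_h\le\ell'$; two cohomological vanishings on $\delta:=\ell'-s_h$ give $\chi(\delta)-(\delta,s_h)=0$ by Riemann--Roch; ellipticity ($\chi\ge 0$ on $L_{>0}$) and anti-nefness of $s_h$ force $\chi(\delta)=(\delta,s_h)=0$; and for $\delta>0$ the equality $\chi(\delta)=0$ forces $C\le\delta$, so $|C|=E$ gives $\delta$ full support and $(\delta,s_h)<0$ because $s_h\ne 0$, a contradiction. The genuine gap is exactly the step you defer to ``the delicate point'': establishing the vanishings. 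The anchor of the whole argument is $H^1_E(\Rs,\Sf{L})=0$ (equivalently, by Serre duality, $H^1(\Rs,\Sf{L}^{\smvee}(K_{\Rs}))=0$), which is what yields $h^0(\delta,\Sf{L}(\delta))=0$; fullness only gives the injection $H^1_E(\Rs,\Sf{L})\hookrightarrow H^1(\Rs,\Sf{L})$, so you additionally need $H^1(\Rs,\Sf{L})=0$. The paper obtains this (and also $H^1(\Rs,\Sf{L}(\delta))=0$) from the vanishing theorem for minimally elliptic singularities with $|C|=E$, \cite[Theorem~7.2.31]{NeBook}: any line bundle with $-c_1\in\mathcal{S}'\setminus\{0\}$ has $h^1=0$. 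This is precisely where the hypotheses $|C|=E$ and $[c_1(\Sf{L})]\ne 0$ do their work. Your proposed substitute, Theorem~\ref{Th:GGR}, cannot fill this hole: applied to $\Sf{M}=\Sf{L}^{\smvee}(K_{\Rs})$ its hypothesis asks for $c_1(\Sf{M}(-K_{\Rs}))=\ell'\in\Delta-\mathcal{S}_{\QQ}$ with $\lfloor\Delta\rfloor=0$, i.e.\ for a fractional cycle $\Delta$ with $(\Delta,E_v)\le(\ell',E_v)$ for all $v$; since $\ell'$ is itself a nonzero anti-nef cycle, this is a strong extra constraint that fails in general (in the paper GGR is used only in the converse statement, Proposition~\ref{prop:Cohomology.Cond}, for the bundle $\Sf{L}^{\smvee}(K_{\Rs}-\Delta)$ with $\Delta=s_h-r_h$, whose relevant Chern class is the fractional cycle $r_h$). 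Note also that a mere bound ``$h^1$ at most one'' would not rescue you: the argument needs $h^0(\delta,\Sf{L}(\delta))$ to be exactly zero, and that comes only through the exact vanishing $H^1(\Rs,\Sf{L})=0$.

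Two smaller points. First, since $p_g=1$ gives $\mathrm{Pic}^0(\Rs)\simeq\CC\ne 0$, the symbols $\Ss{\Rs}(K_{\Rs}+\ell')$, $\Ss{\Rs}(K_{\Rs}+s_h)$, $\Ss{\Rs}(-\ell')$ in your exact sequences are not determined by their Chern classes; the sequences must be run with twists of the actual bundle, i.e.\ with $\Sf{L}^{\smvee}(K_{\Rs})$, $\Sf{L}$ and $\Sf{L}(\delta)$, as the paper does (the conclusion is insensitive to the $\mathrm{Pic}^0$-part only because the vanishing theorem depends solely on the Chern class). Second, two slips: the inequality you want from generic global generation is $(c_1(\Sf{L}),E_v)\ge 0$, not $\le 0$ (negative degree on $E_v$ would force $E_v$ to be a fixed component); and $\chi(\delta)=0$ with $\delta>0$ gives $C\le\delta$, not $\delta\in\ZZ_{>0}C$ --- harmless, since full support of $\delta$ is all that the final contradiction requires.
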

\begin{proof} The proof follows the same strategy as the proof of Theorem
\ref{th:class.rational.case}, however, in this case the package of results that we use from singularity theory
should be  valid for minimally elliptic germs, and even certain additional steps should be modified.

Let $\Sf{L}$ be a full sheaf of rank one such that $[c_1(\Sf{L})]$ is non-zero, hence $c_1(\Sf{L})$ is also different from zero. By Proposition~\ref{fullcondiciones}, the sheaf $\Sf{L}$ is generically generated by global sections.  Therefore, the sheaf does not have any fixed component. Thus, $\ell':=-c_1(\mathcal{L})\in \mathcal{S}'$. Moreover, since $|C|=E$ and $\ell'$ is a non-trivial element of $\mathcal{S}'$, by \cite[Theorem~7.2.31]{NeBook} we get $H^1(\Rs,\Sf{L})=0$. By Proposition~\ref{fullcondiciones}, the natural map from $H^1_E(\Rs,\Sf{L})$ to $H^1(\Rs,\Sf{L})$ is injective. But  $H^1(\Rs,\Sf{L})=0$, therefore
\begin{equation}
\label{eq:full.rank1.Ezero}
    H^1_E(\Rs,\Sf{L})=0.
\end{equation}

Set $h:=[\ell'] \in H=L'/L$. By the special  property of the Lipman cone (cf. preliminaries),
there exists a unique minimal element $s_h\in {\mathcal S}'$ such that $[s_h]=h$. Therefore, $s_h \leq \ell'$. Denote by $\delta := \ell'-s_h\in L_{\geq 0}$. We have to prove that $\delta=0$.  Suppose that $0<\delta$. Consider the exact sequence
\begin{equation}
\label{eq:exact.seq.L1}
    0 \to \Sf{L}^{\smvee}(K_{\Rs}) \otimes \Ss{\Rs}(-\delta) \to \Sf{L}^{\smvee}(K_{\Rs})  \to \Sf{L}^{\smvee}(K_{\Rs}) \otimes\Ss{\delta} \to 0.
\end{equation}
By Serre duality and~\eqref{eq:full.rank1.Ezero}
\begin{equation}
\label{eq:L.dual.zero}
    H^1(\Rs,\Sf{L}^{\smvee}(K_{\Rs}))\cong H^1_E(\Rs,\Sf{L})=0.
\end{equation}
By~\eqref{eq:L.dual.zero} and considering the long exact sequence of cohomology associated to~\eqref{eq:exact.seq.L1} we obtain
\begin{equation*}
H^1(\Rs,\Sf{L}^{\smvee}(K_{\Rs}) \otimes\Ss{\delta}) =0
\end{equation*}
By Serre duality again
\begin{equation}
\label{eq:delta.L.H0}
H^0(\delta,\Sf{L}(\delta)) =0.
\end{equation}
On the other hand, one can consider the following exact sequence as well:
\begin{equation}
\label{eq:exact.seq.delta1}
    0 \to \Sf{L} \to \Sf{L}(\delta) \to \Sf{L}(\delta)\otimes\Ss{\delta} \to 0.
\end{equation}
Since $-c_1(\Sf{L}(\delta))=\ell' -\delta=s_h \in \mathcal{S}'$,
 by~\cite[Theorem~7.2.31]{NeBook} we get $H^1(\Rs,\Sf{L}(\delta))=0$.
 Therefore, from  the cohomological long exact sequence
  associated with~\eqref{eq:exact.seq.delta1} and the previous vanishing, we get
\begin{equation}
\label{eq:delta.L.H1}
    H^1(\delta,\Sf{L}(\delta))=0.
\end{equation}
In particular, ~\eqref{eq:delta.L.H0} and~\eqref{eq:delta.L.H1} provide
\begin{equation}
\label{eq:char.delta.1.elliptic}
\chi(\Sf{L}(\delta)\otimes\Ss{\delta})=0.
\end{equation}
By Riemann-Roch theorem
\begin{equation}
\label{eq:char.delta.2.elliptic}
\chi(\Sf{L}(\delta)\otimes\Ss{\delta})=\chi(\delta) - (\delta,s_h)=0.
\end{equation}
Since $(X,x)$ is elliptic, we have $\min_{Z>0} \chi(Z)=0$. Therefore, $0 \leq \chi(\delta)$. Since $s_h \in \mathcal{S}'$, then $0\leq -(\delta,s_h)$. By the previous inequalities and by~\eqref{eq:char.delta.2.elliptic} we get
\begin{equation}
\label{eq:char.delta.2.elliptic1}
\chi(\delta)=0 \quad \text{and} \quad (\delta,s_h)=0.
\end{equation}
By assumption $\delta \neq 0$. Since $\chi(\delta)=0$, we get $C\leq \delta$. By hypothesis,  the support of $C$ is $E$. Therefore,  the support of $\delta$ is $E$, hence
 $(\delta,s_h)\neq 0$. This is a contradiction to the second equality of~\eqref{eq:char.delta.2.elliptic1}. Thus, $\delta=0$,  $-c_1(\Sf{L})=\ell'=s_h$, where $h=[-c_1(\Sf{L})]\in H$. This ends  the proof.
\end{proof}
Theorem~\ref{th:char.full.minimally.ellip} has a partial converse. First, we need the following propositions.
\begin{proposition}
\label{prop:Cohomology.Cond}
Let $(X,x)$ be a minimally elliptic singularity. Let $\pi \colon \Rs \to X$ be any resolution such that $|C|=E$. Let $h\in H$ be different from zero. If $\Sf{L}\in \mathrm{Pic}^{-s_h}(\Rs)$, then the natural map
\begin{equation*}
    H_E^1(\Rs,\Sf{L}) \to H^1(\Rs,\Sf{L}),
\end{equation*}
is injective.
\end{proposition}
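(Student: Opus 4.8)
The plan is to reduce the desired injectivity to the reflexivity of the push-forward $\Sf{M}:=\pi_*\Sf{L}$, and then to establish that reflexivity using the classification of full sheaves of nonzero class proved in Theorem~\ref{th:char.full.minimally.ellip}. First I would record the standard translation. Writing $U:=\Rs\setminus E$ and noting $H^0_E(\Rs,\Sf{L})=0$ (as $\Sf{L}$ is torsion free and $E$ has codimension one), the local cohomology exact sequence
\begin{equation*}
0\to H^0(\Rs,\Sf{L})\to H^0(U,\Sf{L})\to H^1_E(\Rs,\Sf{L})\to H^1(\Rs,\Sf{L})
\end{equation*}
shows that $H^1_E(\Rs,\Sf{L})\to H^1(\Rs,\Sf{L})$ is injective if and only if the restriction $H^0(\Rs,\Sf{L})\to H^0(U,\Sf{L})$ is an isomorphism. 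Since $H^0(\Rs,\Sf{L})=\Gamma(X,\Sf{M})$ and $H^0(U,\Sf{L})=\Gamma(X\setminus\{x\},\Sf{M})$, and a rank one reflexive module on the normal surface $X$ satisfies Serre's condition $S_2$ (so its sections extend across $x$), this restriction is an isomorphism exactly when $\Sf{M}$ is reflexive. Thus it suffices to prove that $\pi_*\Sf{L}$ is reflexive.

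To prove this I would compare $\Sf{L}$ with the full sheaf attached to the reflexive hull of $\Sf{M}$. Set $\tilde{\Sf{L}}:=(\pi^*\Sf{M}^{\smvee\smvee})^{\smvee\smvee}$, the full sheaf associated to the reflexive module $\Sf{M}^{\smvee\smvee}$, so that $\pi_*\tilde{\Sf{L}}=\Sf{M}^{\smvee\smvee}$ by Proposition~\ref{fullcondiciones}. Over $U$ the modules $\Sf{M}$ and $\Sf{M}^{\smvee\smvee}$ coincide (they are already locally free there), hence $\tilde{\Sf{L}}$ and $\Sf{L}$ agree on $U$; consequently $\tilde{\Sf{L}}\otimes\Sf{L}^{\smvee}=\Ss{\Rs}(D)$ for a cycle $D\in L$ supported on $E$, and therefore $[-c_1(\tilde{\Sf{L}})]=[-c_1(\Sf{L})]=[s_h]=h\neq 0$ in $H$. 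Since $\tilde{\Sf{L}}$ is a full sheaf of rank one whose Chern class has nonzero image in $H$, Theorem~\ref{th:char.full.minimally.ellip} applies and forces $c_1(\tilde{\Sf{L}})=-s_h=c_1(\Sf{L})$. Then $c_1(\tilde{\Sf{L}})-c_1(\Sf{L})=[D]=0$ in $L'$, and as $L\hookrightarrow L'$ is injective we get $D=0$, i.e.\ $\tilde{\Sf{L}}=\Sf{L}$. Hence $\Sf{M}=\pi_*\Sf{L}=\pi_*\tilde{\Sf{L}}=\Sf{M}^{\smvee\smvee}$ is reflexive, as required.

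The main obstacle is concentrated entirely in the appeal to Theorem~\ref{th:char.full.minimally.ellip}: the push-forward of a line bundle along a resolution is only torsion free in general and need not be reflexive, so the genuine content is the identification $c_1(\tilde{\Sf{L}})=-s_h$, and this is precisely where the minimally elliptic hypothesis and the assumption $|C|=E$ enter (through the vanishing $H^1(\Rs,\Sf{L})=0$ used in that theorem). The hypothesis $h\neq 0$ is indispensable: it is what licenses the application of Theorem~\ref{th:char.full.minimally.ellip} to $\tilde{\Sf{L}}$ and what pins the Chern class to the minimal cycle $s_h$; for the class $h=0$ the corresponding step fails, which is the conceptual reason for the exceptional bundle $\Ss{\Rs}(-Z_{min})$ that must be excluded in the main classification theorem. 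One could instead attempt the more computational route of dualizing, $H^1_E(\Rs,\Sf{L})\cong H^1(\Rs,\Sf{L}^{\smvee}(K_{\Rs}))^{*}$, and proving the latter vanishes via the generalized Laufer algorithm together with Theorem~\ref{Th:GGR}; but since $Z_K-s_h$ need not lie in $\mathcal{S}'$, that approach runs into the control of the correction terms along the computation sequence, and the reflexivity argument above neatly sidesteps this difficulty.
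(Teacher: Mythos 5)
Your reduction is a genuinely different route from the paper's, and it is not circular: Theorem~\ref{th:char.full.minimally.ellip} is proved before, and independently of, Proposition~\ref{prop:Cohomology.Cond}, so you may invoke it, and the translation ``injectivity of $H^1_E\to H^1$ $\Leftrightarrow$ reflexivity of $\pi_*\Sf{L}$'' is the same dictionary the paper uses in Proposition~\ref{prop:gggs.fixed}, read in the other direction. For comparison, the paper proves the stronger statement that both groups vanish: $H^1(\Rs,\Sf{L})=0$ by \cite[Theorem~7.2.31]{NeBook}, and then $H^1_E(\Rs,\Sf{L})\cong H^1(\Rs,\Sf{L}^{\smvee}(K_{\Rs}))^*=0$ by Theorem~\ref{Th:GGR} applied after the decomposition $s_h=r_h+\Delta$, $\Delta:=s_h-r_h$, combined with Lemma~\ref{lemma:computation.seq}. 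In particular your closing remark, that the ``dualizing'' route is blocked because $Z_K-s_h$ need not be anti-nef, mischaracterizes it: the passage through $r_h$ (Theorem~\ref{Th:GGR} only needs $\lfloor r_h\rfloor=0$) is exactly how the paper circumvents this, and that computational route is the paper's actual proof.

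There is, however, one genuine gap, concentrated in the word ``consequently'': the claim that $\Sf{G}:=\tilde{\Sf{L}}\otimes\Sf{L}^{\smvee}$ equals $\Ss{\Rs}(D)$ for some $D\in L$, on which your final conclusion $\tilde{\Sf{L}}=\Sf{L}$ rests. What you actually know is only that $\Sf{G}|_U$ is trivial. From this, topology does give $c_1(\Sf{G})\in L$ (the kernel of $L'\cong H^2(\Rs,\ZZ)\to H^2(U,\ZZ)$ is $L$), and this weak form is all you need for $[c_1(\tilde{\Sf{L}})]=[c_1(\Sf{L})]=-h$; but to conclude at the end that $\Sf{G}$ is trivial once $c_1(\Sf{G})=0$, you must know that the restriction $\mathrm{Pic}^0(\Rs)\to \mathrm{Pic}(U)=\mathrm{Cl}(X,x)$ is injective. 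This is not formal: a trivializing section of $\Sf{G}$ over $U$ need not extend meromorphically across $E$, and for a minimally elliptic singularity $\mathrm{Pic}^0(\Rs)$ is a nontrivial group (an elliptic curve for simple elliptic singularities, $\CC^*$ for cusps; note that the paper's shorthand $\mathrm{Pic}^0(\Rs)\simeq\CC^{p_g}$ is literally correct only for rational homology sphere links, and taking it at face value in the sequence~\eqref{eq:def.c1.overline} would even suggest a nonzero kernel $H^1(\Sigma,\ZZ)$ for cusps). So there genuinely is something to rule out, and this is precisely where $p_g\neq 0$ makes your argument harder than in the rational case. The needed injectivity is true and can be proved (or cited from \cite[Chapter~6]{NeBook}): the kernel of $H^1(\Rs,\Ss{\Rs})\to H^1(U,\Ss{U})$ is the image of $H^1_E(\Rs,\Ss{\Rs})$, which vanishes by Grauert--Riemenschneider and duality, while $H^1(\Rs,\ZZ)\to H^1(U,\ZZ)$ is an isomorphism by negative definiteness of the intersection form; a chase through the two exponential sequences then identifies $\ker\left(\mathrm{Pic}(\Rs)\to\mathrm{Pic}(U)\right)$ with $\{\Ss{\Rs}(D)\}_{D\in L}$. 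With that lemma supplied, your proof closes, modulo one last small remark: you obtain an abstract isomorphism $\pi_*\Sf{L}\cong(\pi_*\Sf{L})^{\smvee\smvee}$, and to conclude that the natural inclusion is an equality you should add that a rank one torsion-free module over the normal germ $(X,x)$ has endomorphism ring $\Ss{X,x}$, so an injective endomorphism with finite-length cokernel is an isomorphism.
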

\begin{proof}
Let $\Sf{L}\in \mathrm{Pic}^{-s_h}(\Rs)$. Since $-c_1(\Sf{L})\in \mathcal{S}'$, by~\cite[Theorem~7.2.31]{NeBook} we get
 \begin{equation}\label{eq:new}H^1(\Rs,\Sf{L})=0.
  \end{equation}
  In particular, we have to prove that $H_E^1(\Rs,\Sf{L})=0$ too.

   Set $\Delta:= s_h-r_h$ and   consider the exact sequence
\begin{equation}
\label{eq:exact.L.full1}
    0 \to \Sf{L}^{\smvee}(K_{\Rs}-\Delta) \to \Sf{L}^{\smvee}(K_{\Rs}) \to \Sf{L}^{\smvee}(K_{\Rs})\otimes \Ss{\Delta} \to 0.
\end{equation}
 By the Generalized Grauert–Riemenschneider Theorem~\ref{Th:GGR} we get
\begin{equation}
    \label{eq:Vanish.rh.1}
    h^1(\Sf{L}^{\smvee}(K_{\Rs}-\Delta))=0.
\end{equation}
The cohomological long exact sequence associated with ~\eqref{eq:exact.L.full1} and the vanishing~\eqref{eq:Vanish.rh.1} give
\begin{equation*}
    H^1(\Rs,\Sf{L}^{\smvee}(K_{\Rs})) \cong H^1(\Delta, \Sf{L}^{\smvee}(K_{\Rs})).
\end{equation*}
By Serre duality we get
\begin{equation*} H_E^1(\Rs,\Sf{L}) \cong H^1(\Rs,\Sf{L}^{\smvee}(K_{\Rs}))
     \quad \text{and} \quad
      H^1(\Delta, \Sf{L}^{\smvee}(K_{\Rs})) \cong H^0(\Delta, \Sf{L}(\Delta)).
\end{equation*}
Therefore, we have to prove that $h^0(\Delta, \Sf{L}(\Delta)) =0$. Consider the following exact sequence
\begin{equation*}
    0 \to \Sf{L} \to \Sf{L}(\Delta) \to  \Sf{L}(\Delta) \otimes \Ss{\Delta} \to 0.
\end{equation*}
By Lemma~\ref{lemma:computation.seq} (applying it to $\Sf{L}(\Delta)$), we get $H^0(\Rs,\Sf{L}) \cong H^0(\Rs,\Sf{L}(\Delta))$. Since $H^1(\Rs,\Sf{L})=0$,
cf. \eqref{eq:new},
 we get $h^0(\Delta,\Sf{L}(\Delta))=0$ too. This finishes the proof.
\end{proof}

\begin{proposition}
\label{prop:No.fixed.components}
Let $(X,x)$ be a minimally elliptic singularity. Let $\pi \colon \Rs \to X$ be any resolution such that $|C|=E$. Moreover, suppose that $E_v \cong \mathbb{P}_{\CC}^1$ for any $v \in V$.
If $\Sf{L}\in \mathrm{Pic}(\Rs)$ and $-c_1(\Sf{L}) \in \mathcal{S}'\setminus \{0\}$, then $\Sf{L}$ does not have fixed components.
\end{proposition}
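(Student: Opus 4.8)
The plan is to argue by contradiction, taking the \emph{maximal fixed cycle} of $\Sf{L}$ as the central object. Since $\pi_*\Sf{L}$ is a nonzero rank-one torsion-free $\Ss{X}$-sheaf, we have $H^0(\Rs,\Sf{L})\neq 0$, so there is a well-defined largest effective cycle $F\geq 0$ supported on $E$ with $H^0(\Rs,\Sf{L}(-F))=H^0(\Rs,\Sf{L})$; concretely $F$ is the minimum of the exceptional vanishing cycles of the nonzero global sections, and the set of such cycles is closed under taking $\max$, so a maximal one exists. By construction $\Sf{L}$ has no fixed components precisely when $F=0$, so I assume $F>0$ and aim for a contradiction. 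Throughout, write $\ell':=-c_1(\Sf{L})\in\mathcal{S}'\setminus\{0\}$.

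The first substantive step is to upgrade maximality into a numerical positivity statement. By maximality the bundle $\Sf{L}(-F)$ has no fixed components: if some $E_v$ were fixed for $\Sf{L}(-F)$, then $F+E_v$ would again satisfy $H^0(\Rs,\Sf{L}(-F-E_v))=H^0(\Rs,\Sf{L})$, contradicting the maximality of $F$. Hence for each $v$ some global section of $\Sf{L}(-F)$ restricts to a nonzero section of $(\Sf{L}(-F))|_{E_v}$; since $E_v\cong\PP^1$, a line bundle with a nonzero section has nonnegative degree, giving $(\ell'+F,E_v)\leq 0$ for all $v$. Thus $\ell'+F\in\mathcal{S}'$, and it is nonzero because $\ell'\neq 0$. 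Now the minimally elliptic vanishing \cite[Theorem~7.2.31]{NeBook} (the same input used in Theorem~\ref{th:char.full.minimally.ellip}) applies to the anti-nef nonzero class $\ell'+F$ and yields $H^1(\Rs,\Sf{L}(-F))=0$.

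With this vanishing I pass to the restriction sequence $0\to \Sf{L}(-F)\to\Sf{L}\to \Sf{L}\otimes\Ss{F}\to 0$. Because $H^0(\Rs,\Sf{L}(-F))=H^0(\Rs,\Sf{L})$ and $H^1(\Rs,\Sf{L}(-F))=0$, the long exact cohomology sequence forces $H^0(F,\Sf{L}\otimes\Ss{F})=0$, hence $\chi(F,\Sf{L}\otimes\Ss{F})\leq 0$. On the other hand, Riemann--Roch on the cycle $F$ gives $\chi(F,\Sf{L}\otimes\Ss{F})=\chi(F)+(c_1(\Sf{L}),F)=\chi(F)-(\ell',F)$. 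Since $(X,x)$ is elliptic we have $\chi(F)\geq 0$, and since $\ell'\in\mathcal{S}'$ is anti-nef with $F\geq 0$ we have $-(\ell',F)\geq 0$; therefore $\chi(F,\Sf{L}\otimes\Ss{F})\geq 0$. Comparing the two estimates yields $\chi(F)=0$ and $(\ell',F)=0$.

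Finally I extract the contradiction from $\chi(F)=0$. A standard convexity property of $\chi$ in the elliptic case (from $\chi(A)+\chi(B)=\chi(\min(A,B))+\chi(\max(A,B))+(A-\min(A,B),\,B-\min(A,B))$, together with $\chi\geq 0$ and the minimality defining the elliptic cycle $C$) shows that any $F>0$ with $\chi(F)=0$ satisfies $F\geq C$. As the resolution is chosen with $|C|=E$, the cycle $F$ then has full support, i.e. its $E_v$-coefficient $f_v\geq 1$ for every $v$. Writing $\ell'=\sum_v n_vE_v^*$ with $n_v\in\ZZ_{\geq 0}$ (the generators of $\mathcal{S}'$), the equation $(\ell',F)=-\sum_v n_vf_v=0$ with all $f_v\geq 1$ forces every $n_v=0$, i.e. $\ell'=0$, contradicting $\ell'\neq 0$. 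Hence $F=0$ and $\Sf{L}$ has no fixed components. I expect the main obstacle to be the second step: converting the qualitative ``$\Sf{L}(-F)$ has no fixed components'' into the quantitative condition $\ell'+F\in\mathcal{S}'$ that triggers the vanishing theorem; once $H^1(\Rs,\Sf{L}(-F))=0$ is secured, the Riemann--Roch bookkeeping and the containment $F\geq C$ are routine.
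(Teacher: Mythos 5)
Your proof is correct, but it takes a genuinely different route from the paper. The paper disposes of this proposition in one line, deferring to the proof of \cite[Theorem~7.2.31.(b)]{NeBook}: that argument works component by component, fixing an irreducible curve $E_0$ and showing that the restriction map $H^0(\Rs,\Sf{L})\to H^0(E_0,\Sf{L}|_{E_0})$ is nonzero by bounding $h^1(\Rs,\Sf{L}(-E_0))$ along an infinite computation sequence (climbing by successive copies of $Z_{min}$) combined with the theorem on formal functions. You instead aggregate all putative fixed components into the maximal fixed cycle $F$, upgrade maximality of $F$ into the anti-nef condition $\ell'+F\in\mathcal{S}'\setminus\{0\}$ (this is where $E_v\cong\PP^1_{\CC}$ enters), feed $\Sf{L}(-F)$ into the $h^1$-vanishing part of the same theorem \cite[Theorem~7.2.31]{NeBook}, and then run exactly the Euler-characteristic bookkeeping that the paper itself uses in the proof of Theorem~\ref{th:char.full.minimally.ellip}: the two inequalities force $\chi(F)=0$ and $(\ell',F)=0$, whence $F\geq C$, so $F$ has full support and $(\ell',F)<0$, a contradiction. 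The trade-off is clear: your argument is shorter, makes visible where each hypothesis is used, and recycles machinery already deployed in the paper --- the two standard facts you lean on (a nonzero anti-nef cycle has strictly positive coefficients, needed to see $\ell'+F\neq 0$; and any $F>0$ with $\chi(F)=0$ dominates $C$, a consequence of the uniqueness of the minimally elliptic cycle that the paper invokes verbatim in Theorem~\ref{th:char.full.minimally.ellip}) are unproblematic. On the other hand, it consumes the vanishing statement of \cite[Theorem~7.2.31]{NeBook} as an input, whereas the per-component argument the paper points to is independent of that vanishing (it essentially re-proves it) and yields finer local information (control of base points, not just fixed components), which is why the book states it as part of a stronger theorem.
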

\begin{proof}
The proof is analogous to the proof of ~\cite[Theorem~7.2.31.(b)]{NeBook}.
\end{proof}
The first part of the main classification result is the following theorem.
Its proof follows from  Proposition~\ref{fullcondiciones}, Proposition~\ref{prop:gggs.fixed}, Proposition~\ref{prop:Cohomology.Cond} and Proposition~\ref{prop:No.fixed.components}.
\begin{theorem}
\label{th:class.full.min}
Let $(X,x)$ be a minimally elliptic singularity. Let $\pi \colon \Rs \to X$ be any resolution such that $|C|=E$. Moreover, suppose that $E_v \cong \mathbb{P}_{\CC}^1$ for any $v \in V$.
Then,
\begin{equation*}
 \bigcup_{h\in H \setminus \{0\}}\mathrm{Pic}^{-s_h}(\Rs)
 = \{\mbox{rank one full sheaves with $[c_1({\mathcal L})]\not=0$}\}
 \subset \mathrm{Full}^1(\Rs).
\end{equation*}
\end{theorem}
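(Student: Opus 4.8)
The plan is to prove the asserted set equality by splitting it into two inclusions and feeding the hypotheses into the characterization of fullness in Proposition~\ref{fullcondiciones}. For the inclusion in which a rank one full sheaf $\mathcal{L}$ with $[c_1(\mathcal{L})]\neq 0$ is shown to lie in $\bigcup_{h\in H\setminus\{0\}}\mathrm{Pic}^{-s_h}(\Rs)$, I would simply invoke Theorem~\ref{th:char.full.minimally.ellip}, which already asserts under exactly these hypotheses that $c_1(\mathcal{L})=-s_h$ for some $h\neq 0$; note that this direction uses only $|C|=E$ and not the assumption that each $E_v$ is rational.

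For the reverse inclusion I would fix $h\in H\setminus\{0\}$ and an arbitrary $\mathcal{L}\in\mathrm{Pic}^{-s_h}(\Rs)$, and verify the two conditions of Proposition~\ref{fullcondiciones} guaranteeing that $\mathcal{L}$ is full. Condition~(2), the injectivity of $H^1_E(\Rs,\mathcal{L})\to H^1(\Rs,\mathcal{L})$, is precisely Proposition~\ref{prop:Cohomology.Cond}. The order of the remaining steps matters: having secured condition~(2), I can then apply Proposition~\ref{prop:gggs.fixed}, which under that injectivity hypothesis reduces condition~(1) to the statement that $\mathcal{L}$ has no fixed components. Since $h\neq 0$ forces $s_h\neq 0$ by Remark~\ref{remark:1}, we have $-c_1(\mathcal{L})=s_h\in\mathcal{S}'\setminus\{0\}$, so Proposition~\ref{prop:No.fixed.components} (this is where the hypothesis $E_v\cong\PP^1_\CC$ enters) yields that $\mathcal{L}$ has no fixed components. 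Hence both conditions of Proposition~\ref{fullcondiciones} hold, $\mathcal{L}$ is full, and $[c_1(\mathcal{L})]=-h\neq 0$, so $\mathcal{L}$ lies in the right-hand set.

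Combining the two inclusions gives the equality, and the final containment in $\mathrm{Full}^1(\Rs)$ is immediate since everything so described is a rank one full sheaf. I do not expect a genuine obstacle, because all the analytic content---the vanishing theorems feeding Proposition~\ref{prop:Cohomology.Cond} and the base-point analysis feeding Proposition~\ref{prop:No.fixed.components}---has been isolated into the cited statements. The one point that demands care is the logical dependency just described: Proposition~\ref{prop:gggs.fixed} may only be applied once condition~(2) is in hand, so the injectivity must be established first, and only afterwards can ``no fixed components'' be upgraded to ``generically generated by global sections.''
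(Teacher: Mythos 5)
Your proposal is correct and follows essentially the same route as the paper, whose proof of Theorem~\ref{th:class.full.min} is exactly the combination of Theorem~\ref{th:char.full.minimally.ellip} for the forward inclusion and Propositions~\ref{fullcondiciones}, \ref{prop:gggs.fixed}, \ref{prop:Cohomology.Cond} and \ref{prop:No.fixed.components} for the converse. Your remark on the logical order---establishing the injectivity of $H^1_E(\Rs,\mathcal{L})\to H^1(\Rs,\mathcal{L})$ before invoking Proposition~\ref{prop:gggs.fixed}---is precisely the dependency structure the paper relies on.
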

 The missing case is $h=0$,  equivalently  $s_h=0$, cf. Remark~\ref{remark:1}.
 This case is done in the sequel.

\subsection{The case $h=0$}
In this section we study the families of full shaves with $[c_1(\mathcal{L})]=0$. This particular case should be treated via  a different strategy. First, we have the following characterization of full sheaves of rank one with trivial first Chern class.

\begin{proposition}
\label{prop:case.C1.is.0}
Let $(X,x)$ be the germ of a normal surface singularity. Let $\pi \colon \Rs \to X$ be any resolution. Let $\Sf{L}$ be a full sheaf of rank one. Then, the following statements are equivalent:
\begin{enumerate}
    \item the sheaf is trivial, i.e., $\Sf{L}\cong \Ss{\Rs}$,
    \item we have $c_1(\Sf{L})=0$.
\end{enumerate}
\end{proposition}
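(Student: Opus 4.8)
The plan is to prove the nontrivial implication $(2)\Rightarrow(1)$, since $(1)\Rightarrow(2)$ is immediate: the trivial bundle $\Ss{\Rs}$ has $c_1=0$ by definition. So suppose $\Sf{L}$ is a full sheaf of rank one with $c_1(\Sf{L})=0$; I must show $\Sf{L}\cong\Ss{\Rs}$. The key structural fact I would exploit is the characterization of full sheaves in Proposition~\ref{fullcondiciones}: being full, $\Sf{L}$ is generically generated by global sections and the natural map $H^1_E(\Rs,\Sf{L})\to H^1(\Rs,\Sf{L})$ is injective. By Proposition~\ref{prop:gggs.fixed}, the generic generation is equivalent to $\Sf{L}$ having no fixed components. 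Moreover, since $\pi_*\Sf{L}=M$ is reflexive of rank one and $c_1(\Sf{L})=0$, the associated reflexive module corresponds to the trivial class in $\mathrm{Cl}(X,x)$; the task is to upgrade this to an actual isomorphism of bundles on $\Rs$.

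First I would produce a nowhere-vanishing global section. Because $\Sf{L}$ is full it is generated by global sections off a finite set (it has no fixed components), so there is a section $s\in H^0(\Rs,\Sf{L})$ whose divisor $D=\mathrm{div}(s)$ is an effective divisor with $c_1(\Ss{\Rs}(D))=c_1(\Sf{L})=0$, hence $[D]=0$ in $L'$. The crucial point is that $D$ cannot contain any exceptional component (no fixed components), so $D$ is either empty on $E$ or meets $E$ only in finitely many points transversally along non-exceptional curves. Since $\Sf{L}$ is generated by global sections away from finitely many points and its first Chern class is $0$, I would argue that the base points can be removed: two generic sections share no common divisorial zeros, and with $c_1=0$ any effective divisor representing the zero class and disjoint from $E$ must in fact be trivial near $E$. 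Thus after possibly choosing $s$ generically, $s$ is a nowhere-vanishing section on a neighborhood of $E$.

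The main obstacle I anticipate is ruling out a nontrivial effective divisor $D$ with $[D]=0$ concentrated along non-compact curves through $E$; this is exactly the phenomenon controlled by the injectivity of $H^1_E(\Rs,\Sf{L})\to H^1(\Rs,\Sf{L})$. The clean way to handle it is to pass to the reflexive module: the isomorphism $H^0(\Rs,\Sf{L})\cong H^0(U,\Sf{L})\cong M_x$ (where $U=X_{\mathrm{reg}}$), established in the proof of Proposition~\ref{prop:gggs.fixed}, shows that a global section of $\Sf{L}$ is the same as a section of the reflexive module $M$. Since $c_1(\Sf{L})=0$ forces $\overline{c}_1(M)=0$ in $H$, and since a rank one reflexive module with a nowhere-vanishing section on $U$ is free, $M\cong\Ss{X,x}$, so its generator $1$ pulls back to a global section $s$ of $\Sf{L}$ vanishing nowhere on $\Rs\setminus E$.

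Finally I would conclude. The section $s$ gives a map $\Ss{\Rs}\to\Sf{L}$ that is an isomorphism over $\Rs\setminus E$; its cokernel is supported on $E$. Since both sheaves are line bundles with the same first Chern class $0$, the zero divisor of $s$ is an effective exceptional cycle $Z\geq 0$ with $[Z]=0$, i.e. $Z\in L$ and $c_1(\Ss{\Rs}(Z))=0$ forces $Z=0$ (the intersection form is negative definite, so an effective cycle with trivial class vanishes). Hence $s$ vanishes nowhere and $\Ss{\Rs}\xrightarrow{\,s\,}\Sf{L}$ is an isomorphism, giving $\Sf{L}\cong\Ss{\Rs}$. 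I expect the genuinely delicate step to be the passage through the reflexive module to guarantee the existence of a global section with no zeros along $E$; everything after that is formal, relying on negative definiteness of the intersection form to kill any residual exceptional divisor.
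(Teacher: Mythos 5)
Your second paragraph already contains the whole proof, and once stated cleanly it is exactly the paper's argument. Fullness gives generic generation by global sections, hence there is a \emph{single} section $s$ whose divisor $D=\mathrm{div}(s)$ contains no component of $E$ (for each $v$ the sections vanishing identically along $E_v$ form a proper linear subspace of $H^0(\Rs,\Sf{L})$, and a finite union of proper subspaces cannot exhaust it). Then $D$ is effective, has no exceptional components, and $D\cdot E_v=(c_1(\Sf{L}),E_v)=0$ for all $v$. Every irreducible component of $D$ --- compact or non-compact --- that meets $E$ contributes strictly positively to some $D\cdot E_v$; hence $D\cap E=\emptyset$, and since in the germ setting every positive-dimensional closed analytic subset of $\Rs$ meets $E$, in fact $D=\emptyset$. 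So $s$ is nowhere vanishing and trivializes $\Sf{L}$. In particular the ``main obstacle'' you set up in your third paragraph --- a nontrivial effective $D$ with $[D]=0$ concentrated along non-compact curves through $E$ --- is not an obstacle at all: a non-compact curve through $E$ has strictly positive intersection with some $E_v$, so it is already excluded by $c_1(\Sf{L})=0$. Neither the injectivity of $H^1_E(\Rs,\Sf{L})\to H^1(\Rs,\Sf{L})$ nor negative definiteness is needed anywhere in this proposition.

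The detour through the reflexive module, which you advertise as ``the clean way'' and on which your final paragraph depends, has a genuine gap: you conclude $M\cong\Ss{X,x}$ ``since a rank one reflexive module with a nowhere-vanishing section on $U$ is free'', but a nowhere-vanishing section on $U$ is precisely what you are in the middle of constructing --- the step is circular. The only non-circular input you offer is $\overline{c}_1(M)=0$ in $H$, and that is strictly weaker than $c_1(\Sf{L})=0$ in $L'$ and does not imply freeness: by the exact sequence \eqref{eq:def.c1.overline} the kernel of $\overline{c}_1\colon \mathrm{Cl}(X,x)\to H$ is $\CC^{p_g}/H^1(\Sigma,\ZZ)$, which for $p_g>0$ contains non-free modules; concretely, for a minimally elliptic singularity the pushforwards of the full sheaves in $\mathrm{Pic}^{-Z_{min}}(\Rs)\setminus\{\Ss{\Rs}(-Z_{min})\}$ (Theorem \ref{th:class.h0}, Example \ref{example1}) are non-free rank one reflexive modules with $\overline{c}_1=0$. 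The repair is simply to delete the third paragraph and run your final paragraph with the section produced in the second one: that argument is complete on its own and coincides with the proof in the paper.
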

\begin{proof}
{\it  (1)$\Rightarrow$(2)} follows trivially. Next we prove {\it (2)$\Rightarrow$(1)}. Since $\Sf{L}$ is a full sheaf, it has  a non-trivial global section $s$ without fixed components. Since $c_1(\Sf{L})=0$, for any $p\in \Rs$ we get $s(p)\neq0$. Hence, the section $s$  trivializes  the line bundle $\Sf{L}$. 
\end{proof}
Consider the following example:
\begin{example}
\label{example1}
Consider the minimally elliptic singularity
\begin{equation*}
X=\{x^2+y^3+z^7=0\} \subset \CC^3.
\end{equation*}
By~\cite[p.~7]{Ne1} its link $\Sigma$ it is an integer homology sphere. Hence, $H=H^2(\Sigma,\ZZ)=0$. Denote by $\pi \colon \Rs \to X$ its minimal resolution. Since $H=0$, for any $\mathcal{L}\in \mathrm{Pic}(\Rs)$ we get $[c_1(\mathcal{L})]=0$. Nevertheless, by~\cite{DanRom} there exists non-trivial non-flat full sheaves of rank one.
\end{example}
By Proposition~\ref{prop:case.C1.is.0} we need to study the full sheaves with non-trivial first Chern class but with trivial class in $H$. By  Example~\ref{example1} such sheaves exist.

Let $(X,x)$ be a minimally elliptic singularity and $\pi \colon \Rs \to X$ any resolution such that $|C|=E$. Since $Z_{min}$ is integral, $\chi(Z_{min})=0$ and  $C$ is the minimally elliptic cycle, we have (see \cite{Ne,NeBook})
\begin{equation*}
    C=Z_{K} \leq Z_{min}.
\end{equation*}

\begin{theorem}
\label{th:class.h0}
(a) Let $(X,x)$ be a minimally elliptic singularity. Let $\pi \colon \Rs \to X$ be any resolution such that $|C|=E$. Let $\mathcal{L}$ be a full sheaf of rank one such that $[c_1(\mathcal{L})]=0$ and $c_1(\mathcal{L})\neq 0$. Then, $\mathcal{L} \in \mathrm{Pic}^{-Z_{min}}(\Rs)$.

(b)  Moreover, suppose that $E_v \cong \mathbb{P}_{\CC}^1$ for any $v \in V$.
Let $\mathcal{L}=\mathcal{G} \otimes \Ss{\Rs}(-Z_{min}) \in \mathrm{Pic}^{-Z_{min}}(\Rs)$ for some $\mathcal{G}\in \mathrm{Pic}^0(\Rs)$. Then,  $\mathcal{L}$  is a full sheaf if and only if $\mathcal{G}$ is not trivial.
\end{theorem}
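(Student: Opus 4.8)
The plan is to apply the rank-one form of the fullness criterion throughout: combining Proposition~\ref{fullcondiciones} with Proposition~\ref{prop:gggs.fixed}, a line bundle $\Sf{L}$ of rank one is full if and only if (i) it has no fixed components and (ii) the natural map $H^1_E(\Rs,\Sf{L})\to H^1(\Rs,\Sf{L})$ is injective. Part (a) is then a verbatim transcription of the proof of Theorem~\ref{th:char.full.minimally.ellip} with $Z_{min}$ playing the role of $s_h$, while part (b) will be reduced, via Serre duality and the Gorenstein identity $\omega_{\Rs}\cong\Ss{\Rs}(-Z_K)$, to a single vanishing statement on $\mathrm{Pic}^0(\Rs)$.

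For part (a): since $\mathcal{L}$ is full it is generically globally generated, hence has no fixed components by Proposition~\ref{prop:gggs.fixed}, so $\ell':=-c_1(\mathcal{L})\in\mathcal{S}'$. Because $[c_1(\mathcal{L})]=0$ we have $\ell'\in\mathcal{S}'\cap L=\mathcal{S}$, and $c_1(\mathcal{L})\neq 0$ forces $\ell'\in\mathcal{S}\setminus\{0\}$, whence $\ell'\geq Z_{min}$. Setting $\delta:=\ell'-Z_{min}\geq 0$, I would run exactly the cohomological comparison of Theorem~\ref{th:char.full.minimally.ellip}: fullness together with \cite[Theorem~7.2.31]{NeBook} gives $H^1_E(\Rs,\mathcal{L})=0$, and the two exact sequences obtained by restricting $\mathcal{L}^{\smvee}(K_{\Rs})$ and $\mathcal{L}(\delta)$ to $\delta$ yield, after Serre duality, $h^0(\delta,\mathcal{L}(\delta)\otimes\Ss{\delta})=h^1(\delta,\mathcal{L}(\delta)\otimes\Ss{\delta})=0$. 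Riemann--Roch then gives $\chi(\delta)-(\delta,Z_{min})=0$; ellipticity forces $\chi(\delta)\geq 0$ and anti-nefness of $Z_{min}$ forces $-(\delta,Z_{min})\geq 0$, so both vanish. But $\chi(\delta)=0$ with $\delta\neq 0$ implies $C\leq\delta$, hence $\delta$ is supported on all of $E$ (as $|C|=E$); since $Z_{min}$ is anti-nef and nonzero this gives $(\delta,Z_{min})<0$, a contradiction. Thus $\delta=0$ and $\mathcal{L}\in\mathrm{Pic}^{-Z_{min}}(\Rs)$.

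For part (b), write $\mathcal{L}=\mathcal{G}\otimes\Ss{\Rs}(-Z_{min})$ with $\mathcal{G}\in\mathrm{Pic}^0(\Rs)$. Two facts hold for \emph{every} such $\mathcal{L}$: since $-c_1(\mathcal{L})=Z_{min}\in\mathcal{S}'\setminus\{0\}$, Proposition~\ref{prop:No.fixed.components} shows $\mathcal{L}$ has no fixed components, and \cite[Theorem~7.2.31]{NeBook} gives $H^1(\Rs,\mathcal{L})=0$. By the criterion above, $\mathcal{L}$ is therefore full if and only if $H^1_E(\Rs,\mathcal{L})=0$. Now Serre duality gives $H^1_E(\Rs,\mathcal{L})\cong H^1(\Rs,\mathcal{L}^{\smvee}(K_{\Rs}))$, and the Gorenstein identity $\omega_{\Rs}\cong\Ss{\Rs}(-Z_K)$ together with $Z_K=Z_{min}$ (in the minimal resolution) gives $\mathcal{L}^{\smvee}(K_{\Rs})\cong\mathcal{G}^{-1}$. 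Hence $\mathcal{L}$ is full precisely when $H^1(\Rs,\mathcal{G}^{-1})=0$, and part (b) is reduced to the following Key Lemma: for $\mathcal{F}\in\mathrm{Pic}^0(\Rs)$ one has $h^1(\Rs,\mathcal{F})=0$ if $\mathcal{F}\not\cong\Ss{\Rs}$, while $h^1(\Rs,\Ss{\Rs})=p_g=1$. Applying it to $\mathcal{F}=\mathcal{G}^{-1}$ shows $\mathcal{L}$ is full iff $\mathcal{G}$ is nontrivial; in particular the trivial twist $\Ss{\Rs}(-Z_{min})$ fails, since $h^1(\Rs,\Ss{\Rs})\neq 0$.

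The Key Lemma is the heart of the argument, and its proof is where I expect the main difficulty. I would restrict to the elliptic cycle: by the theorem of formal functions, $H^1(\Rs,\mathcal{F})=\varprojlim_n H^1(nC,\mathcal{F}|_{nC})$, and for $n\geq 1$ the successive layers $\mathcal{F}\otimes\Ss{\Rs}(-nC)|_C$ have non-negative, somewhere strictly positive, degree on the rational components $E_v\cong\PP^1_{\CC}$, so Serre duality on $C$ (where $\omega_C\cong\Ss{C}$, the defining feature of minimal ellipticity) kills their $H^1$; hence the restriction $H^1(\Rs,\mathcal{F})\to H^1(C,\mathcal{F}|_C)$ is an isomorphism. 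On the arithmetic-genus-one Gorenstein cycle $C$ a degree-zero line bundle $\mathcal{N}$ satisfies $h^1(C,\mathcal{N})=h^0(C,\mathcal{N}^{\smvee})$, which equals $1$ if $\mathcal{N}\cong\Ss{C}$ and $0$ otherwise. The remaining, and genuinely delicate, point is to match triviality upstairs with triviality on $C$, i.e. to prove that the restriction $\mathrm{Pic}^0(\Rs)\to\mathrm{Pic}^0(C)$ is injective (in fact an isomorphism of one-dimensional groups); this is exactly what excludes a nontrivial $\mathcal{F}$ with $\mathcal{F}|_C$ trivial, which would spuriously have $h^1\neq 0$. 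I would obtain it from the same top-layer vanishing applied to the exponential sequences on the thickenings $nC$. Finally, the identification $\mathcal{L}^{\smvee}(K_{\Rs})\cong\mathcal{G}^{-1}$ used $Z_K=Z_{min}$; for a general resolution with $|C|=E$ one is left with the extra factor $\Ss{\Rs}(Z_{min}-Z_K)$, where $Z_{min}-Z_K\geq 0$, which I would absorb either by reducing to the minimal resolution through the blow-up compatibility of Proposition~\ref{Prop:Full.generation.global.sec} or by a direct computation-sequence estimate showing it does not affect the vanishing.
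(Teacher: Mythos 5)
Your part (a) and the skeleton of your part (b) coincide with the paper's own argument: the paper proves (a) by rerunning the proof of Theorem~\ref{th:char.full.minimally.ellip} with $Z_{min}$ in place of $s_h$, and in (b) it likewise reduces fullness, via Proposition~\ref{prop:No.fixed.components} and the vanishing $H^1(\Rs,\mathcal{L})=0$ from \cite[Theorem~7.2.31]{NeBook}, to the single condition $H^1_E(\Rs,\mathcal{L})=0$, which Serre duality converts into a statement about a twist of $\mathcal{G}^{\smvee}$. The genuine gap is that your argument for (b) only covers the minimal resolution. The theorem is stated for \emph{any} resolution with $|C|=E$, where one only has $C=Z_K\leq Z_{min}$; Serre duality then gives $H^1_E(\Rs,\mathcal{L})\cong H^1(\Rs,\mathcal{G}^{\smvee}(\delta))$ with $\delta:=Z_{min}-Z_K$ possibly nonzero, so your identification $\mathcal{L}^{\smvee}(K_{\Rs})\cong\mathcal{G}^{-1}$ fails. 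Your first proposed repair, via Proposition~\ref{Prop:Full.generation.global.sec}, does not work as stated: that proposition passes from a resolution to its blow-up, assumes the full sheaf is generated by global sections (not available here), and even if one pulls back from the minimal resolution one would still need to know that \emph{every} full sheaf on $\Rs$ with Chern class $-Z_{min}$ arises as such a pullback, with the factor $\mathcal{G}$ tracked through the correspondence --- none of this is established. Your second suggestion (a direct estimate) is exactly what the paper does, and it is genuine work: from $0\to\mathcal{G}^{\smvee}\to\mathcal{G}^{\smvee}(\delta)\to\Ss{\delta}(\delta)\otimes\mathcal{G}^{\smvee}\to 0$, the vanishing $h^0(\Ss{\delta}(\delta)\otimes\mathcal{G}^{\smvee})=0$ (dualized Grauert--Riemenschneider) and the Riemann--Roch computation $\chi(\Ss{\delta}(\delta)\otimes\mathcal{G}^{\smvee})=\chi(\Ss{\delta}(\delta))=\tfrac12\,\delta\cdot Z_{min}=-\chi(Z_{min})=0$ give $h^1(\mathcal{G}^{\smvee}(\delta))=h^1(\mathcal{G}^{\smvee})$, after which your minimal-resolution argument applies verbatim. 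Without this step the proof does not reach the stated generality.

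Separately, your ``Key Lemma'' ($h^1(\Rs,\mathcal{F})=0$ for nontrivial $\mathcal{F}\in\mathrm{Pic}^0(\Rs)$, while $h^1(\Rs,\Ss{\Rs})=p_g=1$) is precisely the dichotomy the paper quotes from \cite[Theorem~7.2.31]{NeBook}, so you may simply cite it, as the paper does repeatedly. Your sketched re-proof (formal functions, top-layer vanishing on thickenings of $C$, Serre duality on $C$ using $\omega_C\cong\Ss{C}$, and injectivity of $\mathrm{Pic}^0(\Rs)\to\mathrm{Pic}^0(C)$) follows the standard route and is plausibly completable --- for instance, on the minimal resolution the injectivity you worry about follows from $h^1(\Rs,\Ss{\Rs}(-C))=h^1(\Rs,\Omega^2_{\Rs})=0$ together with $H^1(\Rs,\ZZ)\cong H^1(C,\ZZ)$ --- but the steps you flag as delicate (vanishing of sections of line bundles of non-positive multidegree on the non-reduced cycle $C$) are exactly the content of the cited theorem, so as written this portion is both incomplete and redundant.
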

\begin{proof}
{\it (a)}  Let $\mathcal{L}$ be a full sheaf of rank one such that $[c_1(\mathcal{L})]=0$ and $-c_1(\mathcal{L})\neq 0$. Since $\mathcal{L}$ does not have fixed components then $\ell':=-c_1(\mathcal{L}) \in \mathcal{S}'$. Since $[\ell']=0$ and $\ell' \neq 0$, then $\ell' \in \mathcal{S}_0 \setminus \{0\}$. Since $Z_{min} = \min\{\mathcal{S}_0\setminus\{0\}\}$, we get $Z_{min} \leq \ell'$. Now the proof follows exactly as the proof of Theorem~\ref{th:char.full.minimally.ellip}.

{\it (b)}
Let $\mathcal{L} \in \mathrm{Pic}^{-Z_{min}}(\Rs)$. By Proposition~\ref{fullcondiciones} we need to
test the following properties
\begin{enumerate}
    \item the sheaf $\mathcal{L}$ does not have fixed components,
    \item the natural map
 $   H^1_E(\Rs,\mathcal{L}) \to H^1(\Rs,\mathcal{L}) $
is an injection.
\end{enumerate}
(1) follows from Proposition~\ref{prop:No.fixed.components}. In order to test (2)
 write  $\mathcal{L}$ as $ \mathcal{G}(-Z_{min})$ for some $\mathcal{G}\in \mathrm{Pic}^0(\Rs)$.
 Then, by Serre duality,
\begin{equation*}
    H^1_E(\Rs,\mathcal{L})\cong H^1(\Rs,\mathcal{G}^{\smvee}(Z_{min}-Z_K)).
\end{equation*}
Suppose that $Z_{min}<Z_K$. Set $\delta=Z_{min}-Z_K > 0$. By~\cite[p.~26]{Ne}
(or by the dualized Generalized Grauert–Riemenschneider vanishing)
we get $h^0(\Ss{\delta}(\delta)\otimes \mathcal{G}^{\smvee})=0$. Thus, by the exact sequence
\begin{equation*}
    0 \to \mathcal{G}^{\smvee} \to \mathcal{G}^{\smvee}(\delta) \to \Ss{\delta}(\delta)\otimes \mathcal{G}^{\smvee} \to 0,
\end{equation*}
we get
\begin{equation*}
    0 \to H^1(\Rs,\mathcal{G}^{\smvee}) \to H^1(\Rs,\mathcal{G}^{\smvee}(\delta)) \to H^1(\Rs,\Ss{\delta}(\delta)\otimes \mathcal{G}^{\smvee}) \to 0.
\end{equation*}
Note that
\begin{equation*}
    H^1(\Rs,\Ss{\delta}(\delta)\otimes \mathcal{G}^{\smvee}) = -\chi(\Ss{\delta}(\delta)\otimes \mathcal{G}^{\smvee}).
\end{equation*}
By Riemann-Roch  and the ellipticity assumption $\chi(Z_{min})=0$,
\begin{align*}
    \chi\left(\Ss{\delta}(\delta)\otimes \mathcal{G}^{\smvee}\right)&=\chi(\Ss{\delta}(\delta))= \delta^2 + \chi(\delta)=\delta^2 -\frac{1}{2}\delta \cdot( \delta - Z_K)=\delta \cdot \left(\delta -\frac{1}{2}(\delta - Z_K)\right)\\
    &= \delta \cdot \left(\frac{1}{2}Z_{min}\right)=(Z_{min}-Z_K) \cdot \left(\frac{1}{2}Z_{min}\right)=-\chi(Z_{min})=0.
\end{align*}
Therefore,
\begin{equation*}
    H^1(\Rs,\mathcal{G}^{\smvee}) \cong H^1(\Rs,\mathcal{G}^{\smvee}(\delta)).
\end{equation*}
Clearly this isomorphism holds for $\delta=0$ too. Hence, by~\cite[Theorem~7.2.31]{NeBook}
\begin{equation}
\label{eq:case.2}
h^1(\mathcal{G}^{\smvee}(\delta))=h^1(\mathcal{G}^{\smvee}) =
\begin{cases}
0 \quad \text{if $\mathcal{G}^{\smvee} \neq \Ss{\Rs}$}\\
1 \quad \text{if $\mathcal{G}^{\smvee} \cong \Ss{\Rs}$}
\end{cases}
\end{equation}
Since $c_1(\mathcal{G}(-Z_{min}))=-Z_{min}$, we get $c_1(\mathcal{G}(-Z_{min})) \in -\mathcal{S}'$. Hence, by~\cite[Theorem~7.2.31]{NeBook}
\begin{equation*}
H^1(\Rs,\mathcal{G}(-Z_{min})) = 0.
\end{equation*}
This vanishing together with~\eqref{eq:case.2} shows that   the natural map
\begin{equation*}
    H^1_E(\Rs,\mathcal{G}(-Z_{min})) \to H^1(\Rs,\mathcal{G}(-Z_{min})),
\end{equation*}
is an injection if and only if $\mathcal{G}$ is not trivial. This finishes the proof.
\end{proof}
Thus, the classification statement  of rank one full sheaves
is the following.
\begin{theorem}
\label{theo:class.general}
Let $(X,x)$ be a minimally elliptic singularity. Let $\pi \colon \Rs \to X$ be any resolution such that $|C|=E$. Then
\begin{equation*}
    \mathrm{Full}^1(\Rs) \subset \left( \bigcup_{h\in H\setminus\{0\}}\mathrm{Pic}^{-s_h}(\Rs) \right) \cup \left ( \mathrm{Pic}^{-Z_{min}}(\Rs) \setminus \{\Ss{\Rs}(-Z_{min})\} \right) \cup \{\Ss{\Rs}\}.
\end{equation*}
Moreover, suppose that $E_v \cong \mathbb{P}_{\CC}^1$ for any $v \in V$. Then,
\begin{equation*}
    \mathrm{Full}^1(\Rs) = \left( \bigcup_{h\in H\setminus\{0\}}\mathrm{Pic}^{-s_h}(\Rs) \right) \cup \left ( \mathrm{Pic}^{-Z_{min}}(\Rs) \setminus \{\Ss{\Rs}(-Z_{min})\} \right)\cup \{\Ss{\Rs}\}.
\end{equation*}
\end{theorem}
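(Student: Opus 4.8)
The plan is to deduce the statement by combining the preceding results, organising the argument according to the class $[c_1(\Sf{L})]\in H$ of a rank one full sheaf $\Sf{L}$. I would first establish the inclusion ``$\subseteq$'', which needs only the hypothesis $|C|=E$. Given such an $\Sf{L}$, there are three cases. If $[c_1(\Sf{L})]=h\neq 0$, then Theorem~\ref{th:char.full.minimally.ellip} forces $c_1(\Sf{L})=-s_h$, so $\Sf{L}\in\mathrm{Pic}^{-s_h}(\Rs)$ and we land in the first set. If $c_1(\Sf{L})=0$, then Proposition~\ref{prop:case.C1.is.0} gives $\Sf{L}\cong\Ss{\Rs}$, the last set. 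In the remaining case $[c_1(\Sf{L})]=0$ with $c_1(\Sf{L})\neq 0$, Theorem~\ref{th:class.h0}(a) places $\Sf{L}\in\mathrm{Pic}^{-Z_{min}}(\Rs)$, and it only remains to check that $\Sf{L}\neq\Ss{\Rs}(-Z_{min})$.

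The single new input is therefore that $\Ss{\Rs}(-Z_{min})$ is itself never full; since $\Sf{L}$ is full, it is then automatically different from it. To see the non-fullness I would test condition $(2)$ of Proposition~\ref{fullcondiciones}. On the one hand $-c_1(\Ss{\Rs}(-Z_{min}))=Z_{min}\in\mathcal{S}'$, so $H^1(\Rs,\Ss{\Rs}(-Z_{min}))=0$ by \cite[Theorem~7.2.31]{NeBook}. On the other hand, the Serre duality and Riemann--Roch computation from the proof of Theorem~\ref{th:class.h0}(b), specialised to the trivial choice $\mathcal{G}=\Ss{\Rs}$, gives $H^1_E(\Rs,\Ss{\Rs}(-Z_{min}))\cong H^1(\Rs,\Ss{\Rs}(Z_{min}-Z_K))\cong H^1(\Rs,\Ss{\Rs})\cong\CC$, where the last isomorphism is $p_g=1$. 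Hence the natural map $H^1_E\to H^1$ cannot be injective, so $\Ss{\Rs}(-Z_{min})$ fails condition $(2)$ and is not full. I would stress that this computation uses only $|C|=E$ and the identity $\chi(Z_{min})=0$, not the rationality of the components, so ``$\subseteq$'' holds in the full generality of the first assertion.

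For the reverse inclusion ``$\supseteq$'', now assuming $E_v\cong\PP^1_{\CC}$ for all $v$, I would verify that each member of the right-hand side is full: the sets $\mathrm{Pic}^{-s_h}(\Rs)$ for $h\neq 0$ consist of full sheaves by Theorem~\ref{th:class.full.min}; any $\Sf{L}\in\mathrm{Pic}^{-Z_{min}}(\Rs)\setminus\{\Ss{\Rs}(-Z_{min})\}$, written $\Sf{L}=\mathcal{G}(-Z_{min})$ with $\mathcal{G}\in\mathrm{Pic}^0(\Rs)$ necessarily nontrivial, is full by Theorem~\ref{th:class.h0}(b); and $\Ss{\Rs}=(\pi^*\Ss{X})^{\smvee \smvee}$ is full by definition. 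Together with ``$\subseteq$'' this yields the claimed equality. The main obstacle is concentrated entirely in the non-fullness of $\Ss{\Rs}(-Z_{min})$, that is, in pinning down $H^1_E$ through Serre duality and the ellipticity identity $\chi(Z_{min})=0$; once that cohomological input is secured, the theorem reduces to combining the earlier classification results case by case.
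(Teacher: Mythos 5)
Your proposal is correct and follows essentially the same route as the paper, which presents Theorem~\ref{theo:class.general} without a separate proof, as the direct combination of Theorem~\ref{th:char.full.minimally.ellip}, Proposition~\ref{prop:case.C1.is.0}, Theorem~\ref{th:class.h0} and Theorem~\ref{th:class.full.min}, organised by the class $[c_1(\Sf{L})]\in H$ exactly as you do. Your one explicit addition --- the verification, using only $|C|=E$, Serre duality and $\chi(Z_{min})=0$, that $\Ss{\Rs}(-Z_{min})$ is never full --- is precisely the detail needed to exclude $\Ss{\Rs}(-Z_{min})$ from the first inclusion (where no rationality of the $E_v$ is assumed); in the paper this point sits inside the proof of Theorem~\ref{th:class.h0}(b), which is stated under the $\mathbb{P}^1_{\CC}$ hypothesis even though that portion of the argument does not use it, so your observation that the exclusion holds in full generality is a legitimate and worthwhile clarification.
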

\begin{remark}
\label{remark:Sing2.3.7}(a)
In the minimally elliptic case the classification of rank one full sheaves cannot be described only in terms of  first Chern classes.

(b)
Consider the minimally elliptic singularity $X=\{x^2+y^3+z^7=0\} \subset \CC^3$. In this case, $H=0$, hence by Theorem~\ref{theo:class.general}
\begin{equation*}
    \mathrm{Full}^1(\Rs) = \left ( \mathrm{Pic}^{-Z_{min}}(\Rs) \setminus \{\Ss{\Rs}(-Z_{min})\} \right) \cup \{\Ss{\Rs}\}.
\end{equation*}
Since $\Sigma$ is an integer homology $3$-sphere, we get $H^1(\Sigma,\CC/\ZZ)=0$. Therefore, the only one dimensional representation of $\pi_1(\Sigma)$ is the trivial one. Thus, $\Ss{\Rs}$ is the only flat full sheaf. The non-trivial non-flat full sheaves of rank one constructed in~\cite{DanRom}
belong to $\mathrm{Pic}^{-Z_{min}}(\Rs) \setminus \{\Ss{\Rs}(-Z_{min})\}$.
%
\end{remark}

\section{Flat and non-flat families of reflexive modules on minimally elliptic singularities}
\label{sec:flat.noflat}
In this section we classify those  reflexive modules on a minimally elliptic singularity that
admit flat connection. Since there is a one-to-one correspondence between reflexive modules and full sheaves (at some resolution), we study this question in the minimal resolution.

First we study the case when the link is a rational homology sphere.
In this case an interesting situation appears. Note that each ${\rm Pic}^{\ell'}(\Rs)$ is an affine
space, a ${\rm Pic}^{0}(\Rs)=\CC$ torsor. Hence the space of rank one full sheaves consists of one dimensional families. On the other hand, $H_1(\Sigma,\ZZ)$ is finite, hence the set of flat bundles is discrete: for any $h\in H$ we have to choose exactly one concrete bundle from
${\rm Pic}^{-s_h}(\Rs)$, which is flat. In the $h=0$ case this is easy, it is the trivial bundle
${\mathcal O}_{\Rs}$. For $h\not=0$ the choice should be done by a precise principle.

In fact, such a choice   is already present in the literature, under the name {\it natural
line bundles}, cf. \cite{Gr,NeBook}. If $\Rs$ is the resolution of a normal surface singularity
with rational homology sphere link then the morphism $c_1: {\rm Pic}(\Rs)\to L'$ has a section
(morphism of groups). For any $\ell'\in L'$ there exists  a unique line bundle ${\mathcal L}_{\ell'}
\in {\rm Pic}^{\ell'}(\Rs)$ with the following universal property:
if $N\ell'$ is an integral cycle for some $N\in \ZZ_{>0}$ then $({\mathcal L}_{\ell'})^{\otimes N}={\mathcal O}_{\Rs}(N\ell')$.
This line bundle will be denoted by ${\mathcal O}_{\Rs}(\ell')$.

\begin{theorem}
\label{theo:class.1}
Let $(X,x)$ be a minimally elliptic singularity such that its link is a rational homology sphere. Let $\pi \colon \Rs \to X$ be the minimal resolution.

Corresponding to $h=[c_1({\mathcal L})]=0$, among the rank one full sheaves
\begin{equation*}
    \left ( \mathrm{Pic}^{-Z_{min}}(\Rs) \setminus \{\Ss{\Rs}(-Z_{min})\} \right) \cup \{\Ss{\Rs}\}
\end{equation*}
only the trivial sheaf $\Ss{\Rs}$ admits a flat connection.

Corresponding to $h=[c_1({\mathcal L})]\not=0$, among the rank one full sheaves ${\rm Pic}^{-s_h}(\Rs)$
only  ${\mathcal O}_{\Rs}(-s_h)$ admits a flat connection.
\end{theorem}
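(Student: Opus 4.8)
The plan is to reduce the statement to comparing two group-theoretic sections of the surjection $\overline{c}_1\colon \mathrm{Cl}(X,x)\to H$. Recall that, through the equivalence $\mathrm{Ref}^{1,\nabla}_{X}\cong \mathrm{Rep}^1_{\pi_1(\Sigma)}$, a rank one reflexive module admits a flat connection precisely when it lies in the image of the forgetful morphism $\Psi\colon H^1(\Sigma,\CC/\ZZ)\to \mathrm{Cl}(X,x)$. Since the link is a rational homology sphere, $H^1(\Sigma,\CC)=H^2(\Sigma,\CC)=0$, so \eqref{eq:long.sequence.coh.Sigma} forces $\Phi\colon H^1(\Sigma,\CC/\ZZ)\to H$ to be an isomorphism (exactly as in the proof of Theorem~\ref{th:new}), while \eqref{eq:def.c1.overline} collapses to $0\to\CC^{p_g}\to \mathrm{Cl}(X,x)\xrightarrow{\overline{c}_1}H\to 0$ with $p_g=1$, so $\ker\overline{c}_1\cong\CC$.

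First I would observe that $\Psi$ is an injective homomorphism of groups: it is multiplicative, and injectivity is immediate from $\overline{c}_1\circ\Psi=\Phi$ (Lemma~\ref{lemma:Rep.H2}) together with the injectivity of $\Phi$. Hence $s:=\Psi\circ\Phi^{-1}\colon H\to \mathrm{Cl}(X,x)$ is a section of $\overline{c}_1$ whose image is exactly the set of flat rank one modules.

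Next I would build a second section from the natural line bundles. The assignment $\ell'\mapsto {\mathcal O}_{\Rs}(\ell')$ is a homomorphism $L'\to \mathrm{Pic}(\Rs)$ splitting $c_1$; composing it with the restriction homomorphism $\mathrm{Pic}(\Rs)\to \mathrm{Pic}(\Rs\setminus E)=\mathrm{Cl}(X,x)$ and using that ${\mathcal O}_{\Rs}(\ell)$ is trivial off $E$ for $\ell\in L$, one obtains a section $\tau\colon H\to \mathrm{Cl}(X,x)$ of $\overline{c}_1$ with $\tau([\ell'])=[{\mathcal O}_{\Rs}(\ell')]$. The decisive point is then a uniqueness statement: any two sections of $\overline{c}_1$ differ by a homomorphism $H\to\ker\overline{c}_1\cong\CC$, and $\Hom(H,\CC)=0$ since $H$ is finite and $\CC$ is torsion-free. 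Therefore $s=\tau$, that is, the flat modules are precisely those underlying the natural line bundles.

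Finally I would translate this back to full sheaves via the equality case of Theorem~\ref{theo:class.general}, which applies because a rational homology sphere link forces every $E_v\cong\PP^1$. For $h=0$ the unique flat class is $\tau(0)=[\Ss{\Rs}]=\Ss{X}$, and every other member of $(\mathrm{Pic}^{-Z_{min}}(\Rs)\setminus\{\Ss{\Rs}(-Z_{min})\})\cup\{\Ss{\Rs}\}$ has $\overline{c}_1=0$ yet, being a distinct full sheaf, yields a module different from $\Ss{X}$ and hence non-flat. For $h\neq0$ every element of $\mathrm{Pic}^{-s_h}(\Rs)$ is full, and restriction to the smooth locus identifies $\mathrm{Pic}^{-s_h}(\Rs)$ bijectively with the fibre $\overline{c}_1^{-1}([-s_h])$ as $\CC$-torsors; since $\tau([-s_h])=[{\mathcal O}_{\Rs}(-s_h)]$ is the unique flat class in that fibre, its unique preimage ${\mathcal O}_{\Rs}(-s_h)$ is the unique flat full sheaf. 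The only real obstacle is the identity $s=\tau$; the uniqueness-of-sections argument sidesteps any explicit monodromy computation, so the single point needing care is verifying that $\ell'\mapsto[{\mathcal O}_{\Rs}(\ell')]$ is genuinely a section of $\overline{c}_1$, i.e. its multiplicativity and the vanishing of the classes coming from $L$.
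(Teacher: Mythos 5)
Your proposal is correct, and its skeleton coincides with the paper's: both arguments rest on Lemma~\ref{lemma:Rep.H2}, deduce from $H^1(\Sigma,\ZZ)=H^1(\Sigma,\CC)=0$ that $\Phi$ is an isomorphism, conclude that each fibre of $\overline{c}_1$ contains exactly one flat class, and then must identify that class with the natural line bundle ${\mathcal O}_{\Rs}(-s_h)$. Where you genuinely diverge is in this last identification. The paper argues concretely: choosing $N$ with $Nh=0$, it notes $\rho_h^{\otimes N}$ is trivial, hence the full sheaf $\mathcal{L}_h=\Psi(\rho_h)$ satisfies $\mathcal{L}_h^{\otimes N}\cong\Ss{\Rs}(\ell)$ for an integral cycle $\ell\in L$, and then the defining universal property (plus torsion-freeness of $\mathrm{Pic}^0(\Rs)\cong\CC$) forces $\mathcal{L}_h={\mathcal O}_{\Rs}(-s_h)$. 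You instead package the flat classes as the image of a homomorphic section $s=\Psi\circ\Phi^{-1}$ of $\overline{c}_1$, package the natural line bundles as a second homomorphic section $\tau$, and invoke uniqueness of sections via $\Hom(H,\ker\overline{c}_1)=\Hom(H,\CC)=0$. The two mechanisms are equivalent — both ultimately play the finiteness of $H$ against the torsion-freeness of $\CC^{p_g}$ — but your version buys a cleaner, all-$h$-at-once statement and makes explicit two facts the paper uses only silently: the multiplicativity of $\Psi$ (on $U$, via the tensor structure of the Riemann--Hilbert correspondence) and the homomorphism property of $\ell'\mapsto{\mathcal O}_{\Rs}(\ell')$; conversely, the paper's power computation is more self-contained in that it needs the universal property of natural line bundles only for the single cycle $-s_h$ and never needs $\tau$ to be defined on all of $H$. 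Your explicit treatment of the torsor bijection $\mathrm{Pic}^{-s_h}(\Rs)\to\overline{c}_1^{-1}([-s_h])$ (equivariant over the isomorphism $\mathrm{Pic}^0(\Rs)\to\ker\overline{c}_1$) is also a point the paper elides by simply writing $\mathrm{Pic}^{-s_h}(\Rs)=(\overline{c}_1)^{-1}(h)$, and your observation that a rational homology sphere link forces all $E_v\cong\PP^1_{\CC}$ correctly supplies the hypothesis needed to quote the equality case of Theorem~\ref{theo:class.general}.
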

\begin{proof} The case $h=0$ is clear. In the sequel assume that
$h\not=0$. By Theorem~\ref{th:class.full.min} any element of
 $\mathrm{Pic}^{-s_h}(\Rs)=(\bar{c}_1)^{-1}(h)$ is a full sheaf. By Lemma~\ref{lemma:Rep.H2}, the following diagram commutes
\begin{equation*}
    \begin{diagram}
         0  &\rTo& H^1(\Sigma,\ZZ) &\rTo& H^1(\Rs,\Ss{\Rs}) &\rTo& \mathrm{Cl}(X,0) &\rTo^{\overline{c}_1}& H &\rTo& 0\\
           && \uTo^= &&  && \uTo^{\Psi} &\circlearrowleft& \uTo^= && \\
           && H^1(\Sigma,\ZZ) &\rTo& H^1(\Sigma,\CC) &\rTo& H^1(\Sigma,\CC/\ZZ) &\rTo^{\Phi}& H &\rTo& 0
    \end{diagram}
\end{equation*}
Here  $H^1(\Sigma,\ZZ)=0$ and $H^1(\Rs,{\mathcal O}_{\Rs})\simeq \CC$.
Furthermore,
 $H^1(\Sigma,\CC)=0$ too, hence $\Phi$ is an isomorphism. Hence the only flat bundle in
 $(\bar{c}_1)^{-1}(h)\simeq\CC$ is $\Psi(\Phi^{-1}(h))$.

Let us write $\rho_h \in H^1(\Sigma,\CC/\ZZ)$ for the representation with $\Phi(\rho_h)=h$.
Since $H$ is torsion, there exists $N \in \mathbb{N}$ such that $N h=0$.
 Denote $\rho_h \otimes \rho_h \otimes \dots \otimes \rho_h$ (the tensor of $\rho_h$ with itself $N$-times) by
$\rho_h^{\otimes N}$.
Since $\Phi\left(\rho_h^{\otimes N}\right)=N h = 0$ and $\Phi$ is an isomorphism, we get that $\rho_h^{\otimes N}$ is isomorphic to the trivial representation. Therefore, $\Psi\left(\rho_h^{\otimes  N}\right)=\Ss{X}$. Now,
set  $\mathcal{L}_h:=\Psi(\rho_h)$. Therefore, $\mathcal{L}^{\otimes N}_h$ agrees with $\Psi(\rho_h^{\otimes N})$ in $\Rs \setminus E$. Therefore, there exists an integral  cycle $\ell\in L$ such that
 $   \mathcal{L}^{\otimes N}_h\cong \Ss{\Rs}(\ell)$.
This finishes the proof.
\end{proof}
Finally we treat the  cusp singularities. They are defined by the property that their
 minimal resolution graph is a cyclic (loop) with all $g(E_v)=0$ \cite{Laufer77,Ne}.
\begin{theorem}
\label{theo:class.2}
Let $(X,x)$ be a cusp singularity. Let $\pi \colon \Rs \to X$ be the minimal resolution.
Then, any reflexive module of rank one admits a flat connection.
\end{theorem}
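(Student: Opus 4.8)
Recall that a reflexive module of rank one over $(X,x)$ is the same as an element of $\mathrm{Cl}(X,x)$, and that such a module admits a flat connection precisely when it lies in the image of the forgetful morphism $\Psi\colon H^1(\Sigma,\CC/\ZZ)\to \mathrm{Cl}(X,x)$; hence the assertion is equivalent to the surjectivity of $\Psi$. The plan is to deduce this surjectivity from the commutative ladder used in the proof of Theorem~\ref{theo:class.1}, relating the analytic sequence~\eqref{eq:def.c1.overline} (with $\CC^{p_g}=H^1(\Rs,\Ss{\Rs})$) to the topological sequence~\eqref{eq:long.sequence.coh.Sigma}. The only difference from the rational homology sphere case is that now the groups on the far left are nonzero, and this is exactly where the geometry of the cusp must enter.

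The minimal resolution of a cusp has exceptional divisor $E$ a cycle of rational curves, so $(X,x)$ is minimally elliptic and Gorenstein with $p_g=1$, and its link $\Sigma$ is a torus bundle over $S^1$ with hyperbolic monodromy $A\in SL(2,\ZZ)$. Since $\det(A-\Id)=2-\mathrm{tr}(A)\neq 0$, the mapping torus has $H_1(\Sigma,\ZZ)\cong\ZZ\oplus\coker(A-\Id)$ with $\coker(A-\Id)=H$ finite; in particular $b_1(\Sigma)=1$, so $H^1(\Sigma,\ZZ)\cong\ZZ$ and $H^1(\Sigma,\CC)\cong\CC$. On the analytic side $\dim_\CC H^1(\Rs,\Ss{\Rs})=p_g=1$. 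First I would record these two numerical facts, as their coincidence is the crux.

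Next I would run the diagram chase. Writing $\iota\colon H^1(\Rs,\Ss{\Rs})\to\mathrm{Cl}(X,x)$ and $j\colon H^1(\Sigma,\CC)\to H^1(\Sigma,\CC/\ZZ)$ for the middle horizontal maps, and $\phi\colon H^1(\Sigma,\CC)\to H^1(\Rs,\Ss{\Rs})$ for the middle vertical comparison map of the ladder, I take $y\in\mathrm{Cl}(X,x)$; surjectivity of $\Phi$ onto $H$ gives $x_0$ with $\Phi(x_0)=\overline c_1(y)$, whence $\overline c_1\bigl(y-\Psi(x_0)\bigr)=0$ by Lemma~\ref{lemma:Rep.H2}, so that $y-\Psi(x_0)=\iota(w)$ for some $w\in H^1(\Rs,\Ss{\Rs})$ by exactness of~\eqref{eq:def.c1.overline}. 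If $\phi$ is surjective, then $w=\phi(u)$, and commutativity of the middle square gives $\iota(w)=\Psi(j(u))$, so $y=\Psi\bigl(x_0+j(u)\bigr)$ and $\Psi$ is onto. Thus everything reduces to the surjectivity of $\phi$.

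Finally I would prove $\phi$ surjective, and here the two numerical facts do all the work. By commutativity of the left square of the ladder, the restriction of $\phi$ to the image of the coefficient inclusion $H^1(\Sigma,\ZZ)\hookrightarrow H^1(\Sigma,\CC)$ coincides with the map $H^1(\Sigma,\ZZ)\to H^1(\Rs,\Ss{\Rs})$ appearing in~\eqref{eq:def.c1.overline}, which is \emph{injective} by exactness of that sequence. Since $H^1(\Sigma,\ZZ)\cong\ZZ$ has rank $1$ and spans $H^1(\Sigma,\CC)\cong\CC$ over $\CC$, the map $\phi$ is a nonzero $\CC$-linear map $\CC\to\CC$, hence an isomorphism because $\dim_\CC H^1(\Rs,\Ss{\Rs})=p_g=1$; in particular it is surjective, completing the chase. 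The main obstacle, and the conceptual heart of the statement, is precisely this equality $b_1(\Sigma)=p_g$ for cusps: it forces the flat directions $H^1(\Sigma,\CC)$ to fill the whole analytic space $H^1(\Rs,\Ss{\Rs})$ of deformations of line bundles. (For a simple elliptic germ one instead has $b_1(\Sigma)=2>1=p_g$, so $\phi$ cannot be injective and a genuinely different surjectivity argument is required; one may double-check the cusp computation directly by restricting to $E$, where the normalization sequences of the cycle of $\PP^1$'s identify $H^1(E,\CC)\xrightarrow{\sim}H^1(E,\Ss{E})$.)
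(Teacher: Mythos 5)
Your proof is correct, but it takes a genuinely different route from the paper's. The paper first invokes its classification Theorem~\ref{theo:class.general} and splits into two cases: for $\Sf{L}\in \mathrm{Pic}^{-Z_{min}}(\Rs)\setminus\{\Ss{\Rs}(-Z_{min})\}$ it reduces to showing that every $\mathcal{G}\in\mathrm{Pic}^0(\Rs)$ is flat, which it proves via the isomorphism $H^1(\Rs,\CC)\to H^1(\Rs,\Ss{\Rs})$ (nonzero because $H^1(\Rs,\ZZ)$ injects into $H^1(\Rs,\Ss{\Rs})$, and both sides are one--dimensional); for $\Sf{L}\in\mathrm{Pic}^{-s_h}(\Rs)$, $h\neq 0$, it constructs an explicit flat full sheaf $\Sf{L}_\rho$ with $\Phi(\rho)=h$, uses the classification to match Chern classes, and writes $\Sf{L}=\Sf{L}_\rho\otimes\mathcal{G}$ with $\mathcal{G}$ flat by the first case. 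Your argument is a single four-lemma-style chase in $\mathrm{Cl}(X,x)$ that \emph{bypasses the classification of full sheaves entirely}: it needs only the exactness of \eqref{eq:def.c1.overline} and \eqref{eq:long.sequence.coh.Sigma}, the commutative ladder, and surjectivity of the comparison map $\phi$, which you deduce from the same numerical coincidence ($b_1(\Sigma)=p_g=1$ plus injectivity on the lattice) that drives the paper's Case 1, read on $\Sigma$ instead of on $\Rs$ (equivalent, since restriction $H^1(\Rs,\CC)\to H^1(\Sigma,\CC)$ is an isomorphism). Your route buys brevity and generality: the chase proves surjectivity of $\Psi$ whenever $\phi$ is onto; in fact your closing parenthetical undersells this, since for a simple elliptic germ $\phi\colon \CC^2\to\CC$ is still nonzero on the lattice and hence still surjective (only injectivity fails, which the chase never uses), so the identical argument recovers Kahn's result --- no genuinely different argument is needed there. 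What the paper's route buys instead is finer information: it locates the flat sheaves within each Chern-class stratum of full sheaves, consistent with Theorem~\ref{theo:class.1}.

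One step you must actually supply: the existence of the middle vertical map $\phi$ making \emph{both} the left and middle squares of your ladder commute. This is not in the paper --- Lemma~\ref{lemma:Rep.H2} only establishes the rightmost square $\overline{c}_1\circ\Psi=\Phi$, and the ladders displayed in the proofs of Theorems~\ref{theo:class.1} and~\ref{theo:class.2} have an empty middle column --- so citing ``the commutative ladder used in the proof of Theorem~\ref{theo:class.1}'' attributes to the paper something it does not contain. The claim is true and routine: set $U=X_{\mathrm{reg}}$, identify $\mathrm{Cl}(X,x)\cong\mathrm{Pic}(U)$ and $H^1(\Sigma,-)\cong H^1(U,-)$ ($\Sigma$ is a deformation retract of $U$), and observe that under the Gustavsen--Ile correspondence $\Psi$ is induced by the sheaf inclusion $\CC/\ZZ\cong\CC^*\hookrightarrow \Ss{U}^*$; then both rows arise from the morphism of short exact sequences of sheaves $(\ZZ\to\CC\to\CC/\ZZ)\to(\ZZ\to\Ss{U}\to\Ss{U}^*)$ and all squares commute by naturality. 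Alternatively, define $\phi$ as the composition of the inverse restriction isomorphism $H^1(\Sigma,\CC)\cong H^1(\Rs,\CC)$ with the coefficient map $H^1(\Rs,\CC)\to H^1(\Rs,\Ss{\Rs})$ and verify the two squares directly. The same identification justifies your implicit use of $\Psi$ being a group homomorphism (in $y-\Psi(x_0)$ and $\Psi\bigl(x_0+j(u)\bigr)$). These compatibilities are exactly of the kind the paper itself uses without proof in its own Case 1 (diagram \eqref{eq:diag.flat.2}), so once stated your proof is at the paper's level of rigor; without them, the chase has an unsupported link at its crucial step $\iota(w)=\Psi(j(u))$.
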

\begin{proof}
 Let $L$ be a reflexive module of rank one and let $\Sf{L}$ be the associated full sheaf. By Theorem~\ref{theo:class.general} we have two cases:

\vspace{1mm}

\noindent \underline{{\bf Case}\ $\Sf{L}\in \mathrm{Pic}^{-Z_{min}}(\Rs) \setminus \{\Ss{\Rs}(-Z_{min})\}$.}
 \ By assumption, there exists $\mathcal{G}_L\in \mathrm{Pic}^{0}(\Rs)\setminus \{0\}$ such that
 $\Sf{L} =    \mathcal{G}_L \otimes \Ss{\Rs}(-Z_{min}).$
Hence
\begin{equation*}
\Sf{L}|_U \cong    \mathcal{G}_L|_U.
\end{equation*}
Therefore, in this case it is enough to prove that any element in $\mathrm{Pic}^0(\Rs)$ admits a flat connection. Next we prove this fact.
Consider the following commutative triangle,
\begin{equation*}
    \begin{diagram}
         H^1(\Rs,\ZZ) &&\\
         \dTo &\rdTo&\\
         H^1(\Rs,\CC)&\rTo&H^1(\Rs,\Ss{\Rs})
    \end{diagram}
\end{equation*}
The map from $H^1(\Rs,\ZZ)$ to $H^1(\Rs,\Ss{\Rs})$ is an injection, thus the map from $H^1(\Rs,\CC)$ to $H^1(\Rs,\Ss{\Rs})$ is not the zero map. Now, since $h^1(\Rs,\CC)=h^1(\Rs,\Ss{\Rs})=1$ we get
that the natural map
 $   H^1(\Rs,\CC) \to  H^1(\Rs,\Ss{\Rs})$ is an isomorphism.
This provides the next diagram:
\begin{equation}
\label{eq:diag.flat.2}
    \begin{diagram}
         0  &\rTo& H^1(\Rs,\ZZ) &\rTo& H^1(\Rs,\Ss{\Rs}) &\rTo& \mathrm{Pic}^0(\Rs) &\rTo& 0\\
           && \uTo^= && \uTo^{\cong}  &&  && && \\
           0  &\rTo& H^1(\Rs,\ZZ) &\rTo& H^1(\Rs,\CC) &\rTo^{\gamma}& H^1(\Rs,\CC/\ZZ) && \\
    \end{diagram}
\end{equation}
By~\eqref{eq:diag.flat.2}, we get $\mathrm{Pic}^0(\Rs) \cong \mathrm{Image}(\gamma)$.
Therefore, $\mathrm{Pic}^0(\Rs) \subset H^1(\Rs,\CC/\ZZ)=\mathrm{Rep}^1_{\pi_1(\Rs)}$
This proves this case.

\vspace{1mm}

\noindent
\underline{{\bf Case} \  $\Sf{L} \not\in \mathrm{Pic}^{-Z_{min}}(\Rs) \setminus \{\Ss{\Rs}(-Z_{min})\}$.}  \
 If $L$ is the trivial bundle, we are done. Suppose that $L$ is not trivial. By Theorem~\ref{theo:class.general}, there exists $h \in H$ different from zero such that $\Sf{L} \in \mathrm{Pic}^{-s_h}(\Rs)$. By Lemma~\ref{lemma:Rep.H2}, the following diagram commutes
\begin{equation*}
    \begin{diagram}
         0  &\rTo& H^1(\Sigma,\ZZ) &\rTo& H^1(\Rs,\Ss{\Rs}) &\rTo& \mathrm{Cl}(X,0) &\rTo^{\overline{c}_1}& H &\rTo& 0\\
           && \uTo^= &&  && \uTo^{\Psi} &\circlearrowleft& \uTo^= && \\
           && H^1(\Sigma,\ZZ) &\rTo& H^1(\Sigma,\CC) &\rTo^{\gamma}& H^1(\Sigma,\CC/\ZZ) &\rTo^{\Phi}& H &\rTo& 0
    \end{diagram}
\end{equation*}
Let $\rho$ be any element in $H^1(\Sigma, \CC/\ZZ)$ such that $\Phi(\rho)=h$. Denote by $L_\rho = \Psi(\rho)$ and $\Sf{L}_\rho= (\pi^*L_\rho)^{\smvee \smvee}$. The sheaf $\Sf{L}_\rho$ is a full sheaf such that $[c_1(\Sf{L}_\rho)]=h$, therefore $\Sf{L}_\rho \in \mathrm{Pic}^{-s_h}(\Rs)$. Since $\Sf{L}$ and $\Sf{L}_\rho $ have the same first Chern class, there exists $\mathcal{G} \in \mathrm{Pic}^{0}(\Rs)$ such that
 $   \Sf{L} \cong \Sf{L}_\rho\otimes \mathcal{G}$.
Therefore,
\begin{equation}
\label{eq:flat.4}
    \Sf{L}|_U \cong \Sf{L}_\rho|_U\otimes \mathcal{G}|_U.
\end{equation}
By the first case, $\mathcal{G}$ admits a flat connection. By construction, $\Sf{L_\rho}|_U$ admits a flat connection (recall that $L_\rho$ comes from the  representation $\rho$). Thus, by~\eqref{eq:flat.4} $\Sf{L}|_U$ admits a flat connection. This finishes the proof.
\end{proof}
\begin{remark} In fact, we proved that  the natural map
 $   \mathrm{Ref}^{1,\nabla}_{X} \to \mathrm{Ref}^1_{X}$
is a surjection. Indeed, let us denote by
   $ \mathrm{Vect}^1_{U}$ the category of line bundles over $U$,   and by
$    \mathrm{Vect}^{1,\nabla}_{U}$  the category of flat line bundles over $U$,
 where   $U= X_{\text{reg}}$. Moreover, consider the following diagram
\begin{equation}
\label{eq:dia.flat.1}
    \begin{diagram}
         \mathrm{Ref}^{1,\nabla}_{X} &\rTo& \mathrm{Ref}^1_{X}\\
         \dTo && \dTo\\
         \mathrm{Vect}^{1,\nabla}_{U} &\rTo& \mathrm{Vect}^1_{U}
    \end{diagram}
    \end{equation}
By the Riemann-Hilbert corresponence (see~\cite[Lemma~3.4]{Gustavsen1}) we get $\mathrm{Ref}^{1,\nabla}_{X}\cong \mathrm{Vect}^{1,\nabla}_{U}$. By~\cite[Corollary~3.12]{BuDr}, the inclusion $\iota\colon U \to X$ induces the isomorphism $\mathrm{Ref}^1_{X}\cong \mathrm{Vect}^1_{U}$.
On the other hand, we just proved previously that
 $\mathrm{Vect}^{1,\nabla}_{U} \to \mathrm{Vect}^1_{U}$ is a surjection. Hence the statement follows from
\eqref{eq:dia.flat.1}.
\end{remark}

\appendix
\section{}
\label{sec:proof}
\begin{lemma}
The following commutative diagram commutes
\begin{equation*}
    \begin{diagram}
         \mathrm{Cl}(X,x) &\rTo^{\overline{c}_1}& H\\
         \uTo^{\Psi} && \uTo^=\\
         H^1(\Sigma, \CC/\ZZ) &\rTo^{\Phi}& H
    \end{diagram}
\end{equation*}
\end{lemma}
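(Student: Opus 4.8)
The plan is to reinterpret both $\Phi$ and $\overline{c}_1\circ\Psi$ as the topological first Chern class of one and the same flat line bundle restricted to the link, and then to conclude by naturality. Throughout, write $U:=X_{\mathrm{reg}}$; since a small punctured representative of $(X,x)$ deformation retracts onto $\Sigma$, the inclusions $\Sigma\hookrightarrow U$ and $\Sigma\hookrightarrow\Rs$ satisfy $\pi_1(\Sigma)\cong\pi_1(U)$, and $\Sigma\hookrightarrow U\cong\Rs\setminus E$ is a homotopy equivalence (collar of the boundary). Recall that $\Psi$ sends $\rho\in H^1(\Sigma,\CC/\ZZ)=\mathrm{Hom}(\pi_1(\Sigma),\CC^*)$ to $M_\rho=\iota_*\mathcal{L}_\rho\in\mathrm{Cl}(X,x)$, where $\mathcal{L}_\rho$ is the flat line bundle on $U$ determined by the character $\rho$.

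First I would identify $\Phi$ with a Chern class. Under the isomorphism $\CC/\ZZ\cong\CC^*$ given by $\exp(2\pi i\,\cdot)$, the short exact sequence $0\to\ZZ\to\CC\to\CC/\ZZ\to0$ becomes the exponential sequence $0\to\ZZ\to\CC\to\CC^*\to0$ of constant sheaves on $\Sigma$, and $H^1(\Sigma,\CC^*)$ classifies flat line bundles on $\Sigma$. Consequently the connecting homomorphism of~\eqref{eq:long.sequence.coh.Sigma} is exactly the topological first Chern class of the associated flat bundle, so that $\Phi(\rho)=c_1\big(\mathcal{L}_\rho|_\Sigma\big)\in\mathrm{Tors}\big(H^2(\Sigma,\ZZ)\big)\cong H$.

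Next I would express $\overline{c}_1$ through a resolution. Fix $\pi\colon\Rs\to X$ and let $\Sf{M}_\rho=(\pi^*M_\rho)^{\smvee\smvee}$ be the full sheaf, a line bundle on $\Rs$ with $c_1(\Sf{M}_\rho)\in H^2(\Rs,\ZZ)\cong L'$. Using the cohomology long exact sequence of the pair $(\Rs,\Sigma)$ together with Lefschetz–Poincar\'e duality, the restriction $r\colon H^2(\Rs,\ZZ)\cong L'\to H^2(\Sigma,\ZZ)$ has kernel the image of $H^2(\Rs,\Sigma)\cong L$ and image the torsion subgroup; under $L'\cong H^2(\Rs,\ZZ)$ it is precisely the quotient $L'\to L'/L=H$. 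The class $\overline{c}_1(M_\rho)$, being the homology class in $H$ of the intersection of the corresponding Weil divisor with $\Sigma$, is Poincar\'e dual in $H^2(\Sigma,\ZZ)$ to $r(c_1(\Sf{M}_\rho))=c_1(\Sf{M}_\rho)|_\Sigma$; that is, $\overline{c}_1(M_\rho)=c_1\big(\Sf{M}_\rho|_\Sigma\big)$.

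Finally I would match the two. On $U\cong\Rs\setminus E$ the full sheaf restricts to the flat bundle, $\Sf{M}_\rho|_U\cong\mathcal{L}_\rho$, since $M_\rho=\iota_*\mathcal{L}_\rho$ and the reflexive pullback recovers $\mathcal{L}_\rho$ away from $x$. As $\Sigma\hookrightarrow U$ is a homotopy equivalence, restriction to $\Sigma$ gives $\Sf{M}_\rho|_\Sigma\cong\mathcal{L}_\rho|_\Sigma$, whence $\overline{c}_1(\Psi(\rho))=c_1(\Sf{M}_\rho|_\Sigma)=c_1(\mathcal{L}_\rho|_\Sigma)=\Phi(\rho)$, as desired. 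The main obstacle is the careful bookkeeping of the third step: verifying that $\overline{c}_1$, defined via intersection of a Weil divisor with the link, coincides on the nose (and not merely up to the two independent identifications $H\cong\mathrm{Tors}\,H^2(\Sigma,\ZZ)\cong L'/L$) with the restriction of the resolution Chern class, and fixing orientation conventions so that no stray sign intervenes; once this compatibility is pinned down, the naturality in the last step is immediate.
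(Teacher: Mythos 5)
Your proposal is correct and follows essentially the same route as the paper's own proof: both arguments reinterpret $\Phi$ as the topological first Chern class of the flat line bundle attached to $\rho$ (via the fact that flat bundles have torsion Chern class), identify the full sheaf $\Sf{M}_\rho$ with that flat bundle away from $E$, and conclude by naturality of $c_1$ under restriction from $\Rs$ to $U\simeq\Sigma$, with the class of $c_1(\Sf{M}_\rho)$ in $L'/L=H$ playing the role of $\overline{c}_1$. The only difference is one of bookkeeping: the paper organizes these steps into three commutative squares involving the topological Picard groups of $\Rs$ and $U$, and treats the identification $\overline{c}_1(\Psi(\rho))=[c_1(\Sf{M}_\rho)]$ as definitional, whereas you isolate it as the step requiring care.
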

\begin{proof}
Let $\rho \in  H^1(\Sigma, \CC/\ZZ)$.  Hence, $\rho$ is a one-dimensional representation of $\pi_1(\Sigma)$. Denote by $L_\rho := \Psi(\rho)$. Let $\pi\colon \Rs \to X$ be any resolution. Denote by $\Sf{L}_\rho := \left (\pi^* L_\rho \right)^{\smvee \smvee}$.  Set
\begin{equation*}
h:= \Phi(\rho) \quad \text{and} \quad h':=[c_1(\Sf{L}_\rho)]=\overline{c}_1(\Psi(\rho)).
\end{equation*}
Denote by
\begin{align*}
    U&:=X_{\text{reg}}=\Rs \setminus E,\\
    \mathrm{Pic}^{\text{top}}(\Rs) &:= \text{the topological Picard group of $\Rs$},\\
    \mathrm{Pic}^{\text{top}}(U) &:= \text{the topological Picard group of $U$},\\
    \mathrm{Pic}^{\text{top},\nabla}(U) &:= \text{the topological Picard group of $U$ of flat shaves}.
\end{align*}
Moreover, denote by
\begin{align*}
    \mathrm{top} \colon \mathrm{Pic}(\Rs) &\to \mathrm{Pic}^{\text{top}}(\Rs)\\
    \Sf{L} &\mapsto \Sf{L}^{\mathrm{top}}
\end{align*}
the natural map. Consider the following commutative diagram
\begin{equation*}
\begin{diagram}
     \mathrm{Pic}^{\text{top},\nabla}(U) &\rTo& H^1(U,\CC/\ZZ)\\
     \dTo&\circlearrowleft&\dTo^{\Phi}\\
     \mathrm{Pic}^{\text{top}}(U) &\rTo^{c_1^{\text{top}}}& \mathrm{Tors} \left(H^2(U,\ZZ)\right)
\end{diagram}
\end{equation*}
where $c_1^{\text{top}}$ is the topological Chern class. Recall that the topological first Chern class of a flat vector bundle is always torsion, hence the previous  diagram is commutative. Recall that $\Sigma$ is a deformation retract of $U$, thus
\begin{equation*}
  \mathrm{Tors} \left(H^2(U,\ZZ)\right) \cong  \mathrm{Tors} \left(H^2(\Sigma,\ZZ)\right) \cong H \quad \text{and} \quad H^1(U,\CC/\ZZ) \cong  H^1(\Sigma,\CC/\ZZ).
\end{equation*}
Hence,
\begin{equation}
\label{eq:top.rep.1}
    c_1^{\text{top}}\left ( (\Sf{L}_{\rho}|_U)^{\mathrm{top}} \right) = \Phi(\rho).
\end{equation}
Now, by the following commutative diagram
\begin{equation*}
\begin{diagram}
     \mathrm{Pic}^{\text{top}}(\Rs) &\rTo& \mathrm{Pic}^{\text{top}}(U)  \\
     \dTo^{c_1^{\text{top}}}&\circlearrowleft&\dTo^{c_1^{\text{top}}}\\
     H^2(\Rs,\ZZ) &\rTo& H^2(U,\ZZ)
\end{diagram}
\end{equation*}
we have
\begin{equation}
\label{eq:top.rep.2}
c_1^{\text{top}}\left ( (\Sf{L}_{\rho}|_U)^{\mathrm{top}} \right) = [c_1^{\text{top}} \left( \Sf{L}_{\rho}^{\mathrm{top}} \right)].
\end{equation}
By the following commutative diagram
\begin{equation*}
\begin{diagram}
     \mathrm{Pic}(\Rs) &\rTo& \mathrm{Pic}^{\text{top}}(\Rs)  \\
     \dTo^{c_1}&\circlearrowleft&\dTo^{c_1^{\text{top}}}\\
     H^2(\Rs,\ZZ) &\rTo& H^2(\Rs,\ZZ)
\end{diagram}
\end{equation*}
we have
\begin{equation}
\label{eq:top.rep.3}
c_1^{\text{top}} \left( \Sf{L}_{\rho}^{\mathrm{top}} \right)=c_1\left( \Sf{L}_{\rho} \right).
\end{equation}
By the equalities~\eqref{eq:top.rep.1},~\eqref{eq:top.rep.2} and~\eqref{eq:top.rep.3} we get
\begin{equation*}
h'=[c_1\left( \Sf{L}_{\rho} \right)]=\Phi(\rho)=h.
\end{equation*}
This finishes the proof.
\end{proof}

 \end{document}